\theoremstyle{plain}
\newtheorem{theorem}{Theorem}[section]
\newtheorem{proposition}[theorem]{Proposition}
\newtheorem{lemma}[theorem]{Lemma}
\newtheorem{corollary}[theorem]{Corollary}
\theoremstyle{definition}
\newtheorem{example}[theorem]{Example}
\newtheorem{assumption}[theorem]{Assumption}
\theoremstyle{remark}
\newtheorem{remark}[theorem]{Remark}
\numberwithin{equation}{section}
\newcommand{\B}{{\mathcal{B}}}
\newcommand{\Sval}{{\rm S}_{\rm val}}
\newcommand{\Sord}{{\rm S}_{\rm ord}}
\newcommand{\Sall}{{\rm S}}
\newcommand{\Sabsval}{{\rm S}_{\rm abs}}
\newcommand{\BB}{\mathfrak{B}}
\newcommand{\MM}{\mathfrak{m}}
\newcommand{\Mcal}{\mathcal{M}}
\newcommand{\condI}{{\rm(I)}}
\newcommand{\condU}{{\rm(U)}}
\newcommand{\condT}{{\rm(T)}}
\title[Approximation theorems]{Approximation theorems for spaces of localities}
\author{Sylvy Anscombe, Philip Dittmann, and Arno Fehm}
\address{Jeremiah Horrocks Institute, University of Central Lancashire, Preston PR1 2HE, United Kingdom}
\email{sanscombe@uclan.ac.uk}
\address{Afdeling Algebra, KU Leuven, Celestijnenlaan 200b, 3001 Leuven, Belgium}
\email{philip.dittmann@kuleuven.be}
\address{Institut f\"{u}r Algebra, Technische Universit\"{a}t Dresden, 01062 Dresden, Germany}
\email{arno.fehm@tu-dresden.de}
\thanks{\today}
\begin{document}
\begin{abstract}
The classical Artin--Whaples approximation theorem allows to simultaneously approximate finitely many different elements of a field with respect to finitely many pairwise inequivalent absolute values.
Several variants and generalizations exist, for example for finitely many (Krull) valuations,
where one usually requires that these are independent, i.e.~induce different topologies on the field.
Ribenboim proved a generalization for finitely many valuations where the condition of independence is relaxed for a natural compatibility condition, and Ershov proved a statement about simultaneously approximating finitely many different elements with respect to finitely many possibly infinite sets of pairwise independent valuations.

We prove approximation theorems for infinite sets of valuations and orderings without requiring pairwise independence.
\end{abstract}
\maketitle

\setcounter{tocdepth}{2}
% \tableofcontents

\section{Introduction}
%\noindent
We fix a field $K$,
elements $x_1,\dots,x_n\in K$ and $z_1,\dots,z_n\in K^\times$
and start by recalling the classical approximation theorem for absolute values: 

\begin{theorem}[Artin--Whaples 1945\footnote{Published in \cite{AW}. See also the historical remarks in \cite[\S4.2.1]{Roquette}. See also {\cite[XII.1.2]{Lang},\cite[1.1.3]{EP}}.}] \label{thm:artin_whaples}
Let $|.|_1,\dots,|.|_n$ be nontrivial absolute values on $K$.
\begin{enumerate}
\item[$\condI$] Assume that  $|.|_1,\dots,|.|_n$ are pairwise inequivalent.
\end{enumerate}
Then there exists $x\in K$ with
$$
 |x-x_i|_i < |z_i|_i\mbox{ for }i=1,\dots,n.
$$
\end{theorem}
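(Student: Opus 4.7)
The plan follows the classical ``partition of unity'' strategy. The crux is the following auxiliary lemma: under hypothesis \condI, for each index $i \in \{1, \dots, n\}$ there exists $y_i \in K$ with $|y_i|_i > 1$ and $|y_i|_j < 1$ for every $j \neq i$. Granting this, the elements $e_i^{(N)} := y_i^N/(1 + y_i^N) \in K$ satisfy $|e_i^{(N)} - 1|_i \to 0$ and $|e_i^{(N)}|_j \to 0$ for every $j \neq i$, as $N \to \infty$. The first limit uses $|1 + y_i^N|_i \to \infty$, a consequence of $|y_i|_i > 1$ valid for both archimedean and non-archimedean absolute values; the second uses $|y_i^N|_j \to 0$ together with $|1 + y_i^N|_j \to 1$. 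Then, for $N$ sufficiently large, $x := \sum_{i=1}^n x_i\, e_i^{(N)}$ satisfies the required bound $|x - x_i|_i < |z_i|_i$, since $x - x_i = x_i(e_i^{(N)} - 1) + \sum_{j \neq i} x_j\, e_j^{(N)}$ and each term can be made arbitrarily small at $|\cdot|_i$ by the triangle or ultrametric inequality. Finitely many such conditions can be achieved simultaneously.

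The auxiliary lemma is proved by induction on $n$. For $n = 2$, inequivalence of two nontrivial absolute values yields $a, b \in K$ with $|a|_1 < 1 \leq |a|_2$ and $|b|_2 < 1 \leq |b|_1$, whence $y := b/a$ satisfies $|y|_1 > 1$ and $|y|_2 < 1$. For the inductive step (taking $i = 1$ by symmetry), apply the induction hypothesis to $|\cdot|_1, \dots, |\cdot|_{n-1}$ to obtain $y \in K$ with $|y|_1 > 1$ and $|y|_j < 1$ for $2 \leq j \leq n-1$, and apply the base case to $|\cdot|_1, |\cdot|_n$ to obtain $z \in K$ with $|z|_1 > 1$ and $|z|_n < 1$. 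If $|y|_n \leq 1$, then $z\, y^N$ works for large $N$, since its $|\cdot|_1$-value grows while its values at the remaining $|\cdot|_j$ decay or stay bounded by $|z|_n < 1$. If $|y|_n > 1$, use $z \cdot y^N/(1 + y^N)$ instead: the fraction tends to $1$ at $|\cdot|_1$ and at $|\cdot|_n$, and to $0$ at each $|\cdot|_j$ for $2 \leq j \leq n-1$, so after multiplying by $z$ the value tends to $|z|_1 > 1$ at $|\cdot|_1$, to $|z|_n < 1$ at $|\cdot|_n$, and to $0$ at the intermediate valuations.

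The main obstacle is the inductive step of the lemma, where the case distinction on $|y|_n$ is essential: the separating element provided by the induction hypothesis need not behave well at the newly adjoined absolute value, and the trick of passing to $y^N/(1+y^N)$ is what rescues the construction. Once the lemma is in hand, the theorem reduces to the elementary limit estimates sketched in the first paragraph, which apply uniformly across the archimedean and non-archimedean cases because the key asymptotics $|1+y^N|_i \to \infty$ (when $|y|_i > 1$) and $|1+y^N|_j \to 1$ (when $|y|_j < 1$) hold in both regimes.
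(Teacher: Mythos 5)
Your proof is correct, and it is the standard classical argument (the one found in Lang and in Cassels--Fr\"ohlich). Note, however, that the paper does not prove Theorem \ref{thm:artin_whaples} at all: it is quoted as a classical black box with references to \cite{AW}, \cite[XII.1.2]{Lang} and \cite[1.1.3]{EP}, and the machinery the paper later builds (the localities framework, Proposition \ref{prop:phiminmax}, Lemma \ref{lem:approximation_lemma}) instead generalizes this circle of ideas to infinite families; the expression $a_d b^{-d} f(b^{-1})^{-1}$ there plays exactly the role of your approximate idempotents $y^N/(1+y^N)$. Two small points in your write-up deserve to be made explicit. First, the base case $n=2$ of your auxiliary lemma silently invokes the one genuinely non-elementary ingredient, namely that for nontrivial absolute values the implication $(|x|_1<1 \Rightarrow |x|_2<1)$ for all $x$ already forces equivalence; inequivalence then gives your $a$, and by symmetry your $b$, and $y=b/a$ works. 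This lemma should be cited or proved, since it is where ``inequivalent'' is actually used. Second, one should observe that $1+y^N\neq 0$ throughout: if $y^N=-1$ then $|y|_1^N=1$, contradicting $|y|_1>1$, so the elements $y^N/(1+y^N)$ are always defined. With these remarks supplied, the induction and the final limit estimates are all valid in both the archimedean and ultrametric regimes, exactly as you say.
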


Since the non-archimedean absolute values correspond to Krull valuations of rank $1$,
the following theorem is a generalization in the non-archimedean case:

\begin{theorem}[{Bourbaki\footnote{This attribution is taken from \cite{Ribenboim}.
See also 
\cite[VI.7.2 Theorem 1] {Bourbaki},
\cite[2.4.1]{EP}, \cite[10.1.7]{Efrat}.}}]
\label{thm:Bourbaki}
Let $v_1,\dots,v_n$ be nontrivial valuations\footnote{The term {\em valuation} in this work always refers to Krull valuations,
with value group written additively.} on $K$.
\begin{enumerate}
    \item[$\condI$] Assume that $v_1,\dots,v_n$ are pairwise independent\footnote{That is, they induce distinct topologies on $K$, or, equivalently, they have no nontrivial common coarsening.}.
\end{enumerate}
Then there exists $x\in K$ with
$$
    v_i(x-x_i) > v_i(z_i) \mbox{ for }i=1,\dots,n.
$$
\end{theorem}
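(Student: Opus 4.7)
The plan is to construct, for each $i$, an ``approximate idempotent'' $e_i \in K$ which is very close to $1$ at $v_i$ and very close to $0$ at every other $v_j$; then $x := \sum_{i=1}^n e_i x_i$ satisfies $v_i(x - x_i) = v_i\bigl((e_i - 1)x_i + \sum_{j \neq i} e_j x_j\bigr)$, which exceeds $v_i(z_i)$ provided that $v_i(e_i - 1)$ and $v_i(e_j)$ (for $j \neq i$) are made large enough relative to the given $v_i(x_j)$ and $v_i(z_i)$.

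Each $e_i$ will be built from a \emph{separating element} $a_i \in K$ satisfying $v_i(a_i) > 0$ and $v_j(a_i) < 0$ for $j \neq i$, by setting $e_i := 1/(1 + a_i^N)$ for large $N$. A short calculation confirms the required approximations: at $v_i$ one has $v_i(a_i^N) = N v_i(a_i) \to +\infty$, whence $v_i(1+a_i^N) = 0$ and $v_i(e_i - 1) = v_i\bigl(-a_i^N/(1+a_i^N)\bigr) \to +\infty$; at $v_j$ with $j \neq i$ one has $v_j(a_i^N) \to -\infty$, whence $v_j(1+a_i^N) = N v_j(a_i)$ and $v_j(e_i) = -N v_j(a_i) \to +\infty$.

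The step I expect to be the main obstacle is the construction of the separating elements $a_i$, which I would carry out by induction on $n$. In the base case $n = 2$, pairwise independence ensures that neither of the valuation rings of $v_1, v_2$ is contained in the other, so there exist $b$ with $v_1(b) \geq 0, v_2(b) < 0$ and $c$ with $v_1(c) < 0, v_2(c) \geq 0$; then $a_1 := b/c$ satisfies $v_1(a_1) > 0$ and $v_2(a_1) < 0$, and symmetrically for $a_2$. For the inductive step (producing $a_1$, say), I would apply the inductive hypothesis to $v_1, \ldots, v_{n-1}$ to obtain $\alpha$ with $v_1(\alpha) > 0$ and $v_j(\alpha) < 0$ for $2 \leq j \leq n-1$, and the base case to the pair $(v_1, v_n)$ to obtain $\beta$ with $v_1(\beta) > 0$ and $v_n(\beta) < 0$; then I would argue that $a_1 := \alpha + \beta^M$ works for sufficiently large integer $M$. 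At $v_1$ both summands have positive valuation; at $v_n$, $\beta^M$ has arbitrarily negative valuation dominating $\alpha$; and at each intermediate index $j$, either $v_j(\beta) \geq 0$ (so the negative $v_j(\alpha)$ dominates the sum) or $v_j(\beta) < 0$ (so $Mv_j(\beta)$ eventually becomes smaller than $v_j(\alpha)$ and dominates, but remains negative), yielding $v_j(a_1) < 0$ in either case.
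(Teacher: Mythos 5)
Your overall architecture (separating elements, approximate idempotents $e_i=(1+a_i^N)^{-1}$, then $x=\sum_i e_ix_i$) is the standard one, but the step you treat as routine calculus is exactly where the content of Theorem \ref{thm:Bourbaki} lies, and as written it has a genuine gap: the value group $\Gamma_{v_i}$ of a Krull valuation need not be archimedean, so from $v_i(a_i)>0$ you cannot conclude that $Nv_i(a_i)$ eventually exceeds a prescribed element of $\Gamma_{v_i}$. The values $Nv_i(a_i)$ stay inside the convex subgroup generated by $v_i(a_i)$, and the target $v_i(z_i)-v_i(x_j)$ may lie outside it. Concretely, take $K=k(s,t)$, let $v_1$ be the rank-$2$ valuation obtained by composing the $t$-adic valuation with the $s$-adic valuation on the residue field (value group $\mathbb{Z}^2$ ordered lexicographically, $v_1(t)=(1,0)$, $v_1(s)=(0,1)$), and let $v_2$ be the $(t-1)$-adic valuation; these are independent. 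Your base-case recipe may return $b=s/(t-1)$, $c=1/s$ and hence $a_1=s^2/(t-1)$ with $v_1(a_1)=(0,2)$, and then $Nv_1(a_1)=(0,2N)$ never exceeds $v_1(t)$, so the approximation condition $v_1(x-x_1)>v_1(t)$ is unreachable for every $N$. The same non-archimedean issue appears in your inductive step (``$Mv_j(\beta)$ eventually becomes smaller than $v_j(\alpha)$''), although that particular step can be repaired without it, by choosing $M$ so as to avoid the at most one coincidence $Mv_j(\beta)=v_j(\alpha)$ per index and invoking the ultrametric equality when the minimum is uniquely attained.

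A structural symptom of the gap is that your argument uses independence only through incomparability (neither valuation ring contains the other), and pairwise incomparability alone does not imply the conclusion --- that is precisely why Theorem \ref{thm:Ribenboim} must impose a compatibility condition at common coarsenings. The repair is to use the absence of a common nontrivial coarsening to build separating elements \emph{adapted to the targets}: given $\gamma\in\Gamma_{v_1}$, either the convex subgroup generated by $v_1(a_1)$ contains $\gamma$ (and a power of $a_1$ suffices), or one passes to the nontrivial coarsening $u$ of $v_1$ by the largest convex subgroup of $\Gamma_{v_1}$ not containing $\gamma$; independence guarantees that $u$ is still incomparable to each $v_j$, a separating element for $(u,v_j)$ has $v_1$-value generating a convex subgroup that must contain $\gamma$, and a suitable power of it then exceeds $\gamma$ at $v_1$ while remaining of negative value at $v_j$. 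Your proof is complete as it stands when all value groups have rank $1$, where incomparability, independence and the archimedean property coincide; for general Krull valuations this coarsening argument is the missing heart of the proof.
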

In the literature, one can find three possible directions of generalizing Theorem \ref{thm:Bourbaki}:

Firstly, one can unify Theorems \ref{thm:Bourbaki} and \ref{thm:artin_whaples} to approximate with respect to finitely many pairwise independent valuations and absolute values,
like in \cite[(4.2) Corollary]{PZ} or \cite[Cor.~27.14]{Warner}.
This also includes approximation with respect to (not necessarily archimedean) orderings on $K$, see Theorem \ref{thm:pairwise_indep_fin_approx} below.

Secondly, one can relax the condition of pairwise independence in Theorem \ref{thm:Bourbaki}, at the expense of introducing a compatibility condition,
as done by Nagata \cite{Nagata} and Ribenboim \cite{Ribenboim}.
\begin{theorem}[Ribenboim 1957\footnote{This is a mild reformulation of {\cite[Th\'{e}or\`{e}me 5’]{Ribenboim}}, see also \cite[p.~136 Th\'{e}or\`{e}me 3]{Ribenboim_book}.}]
\label{thm:Ribenboim}
Let $v_1,\dots,v_n$ be valuations on $K$.
\begin{enumerate}
    \item[$\condI$] Assume that $v_1,\dots,v_n$ are pairwise incomparable
and that if $w$ is a common coarsening of $v_i$ and $v_j$, $i\neq j$, 
then $w(x_i-x_j)\geq w(z_i)=w(z_j)$.
\end{enumerate}
Then there exists $x\in K$ with
$$
    v_i(x-x_i) > v_i(z_i) \mbox{ for }i=1,\dots,n.
$$
\end{theorem}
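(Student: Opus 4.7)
The plan is to proceed by induction on $n$, with the case $n=1$ immediate. The crux is $n=2$, which I would reduce to Theorem~\ref{thm:Bourbaki} via the residue field of the finest common coarsening $w$ of $v_1$ and $v_2$. By incomparability, $w$ is a proper coarsening of both, and by maximality of $w$ the induced valuations $\bar{v}_1, \bar{v}_2$ on $Kw$ are nontrivial and independent. Setting $c := z_1$ so that $w(c) = w(z_1) = w(z_2)$, the compatibility hypothesis places $(x_2-x_1)/c$ in $\mathcal{O}_w$; applying Theorem~\ref{thm:Bourbaki} to $\bar{v}_1, \bar{v}_2$ in $Kw$ yields $\bar{u} \in Kw$ approximating $0$ with respect to $\bar{v}_1$ and $\overline{(x_2-x_1)/c}$ with respect to $\bar{v}_2$, so that $x := x_1 + cu$ for any $w$-integral lift $u$ of $\bar{u}$ does the job.

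For the inductive step $n-1 \Rightarrow n$, I would apply the inductive hypothesis to $v_1, \dots, v_{n-1}$ to obtain $x'$ approximating $x_1, \dots, x_{n-1}$ within the prescribed tolerances, and then correct for $v_n$. The correction $\varepsilon := x - x'$ must satisfy $v_i(\varepsilon) > v_i(z_i)$ for $i<n$ and $v_n(\varepsilon - (x_n-x')) > v_n(z_n)$. Ribenboim's compatibility propagates to this new problem: for the finest common coarsening $w_{in}$ of $v_i$ and $v_n$ one combines the coarsened inductive inequality $w_{in}(x'-x_i) \geq w_{in}(z_i)$ with $w_{in}(x_i-x_n) \geq w_{in}(z_i) = w_{in}(z_n)$ via the triangle inequality.

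The main obstacle is actually constructing $\varepsilon$: this is itself an $n$-valuation approximation problem, so a naive induction would be circular. The resolution I would try is to take $\varepsilon = e(x_n - x')$ for some $e \in K$ with $v_n(e-1)$ large and $v_i(e)$ large ``as much as the common coarsening $w_{in}$ permits''. In the Bourbaki setting one uses $e := 1/(1+u^N)$ for a single $u \in K$ with $v_n(u) > 0$ and $v_i(u) < 0$ for all $i<n$; such a $u$ cannot exist when $v_n$ and some $v_i$ share a nontrivial common coarsening, so one must instead construct $e$ by applying the $n=2$ argument locally at each $w_{in}$ (where the residue valuations are independent) and combining the local contributions by multiplication, being careful that the product simultaneously retains the required concentration at $v_n$.
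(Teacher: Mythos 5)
Your $n=2$ argument is sound: for incomparable $v_1,v_2$ the finest common coarsening $w=v_1\vee v_2$ properly coarsens both, the induced $\bar v_1,\bar v_2$ on $Kw$ are nontrivial and independent, and after normalizing by $c=z_1$ the compatibility hypothesis turns the problem into a Bourbaki approximation in $Kw$ that lifts along $w$. This matches how the paper handles a dependent pair (compare Proposition \ref{prop:RibenboimLocalities} and the ``all dependent'' case in the proof of Proposition \ref{prop:finitary_approximation}). The propagation of the compatibility condition to the pair $(x',x_n)$ via the triangle inequality is also correct and is exactly the computation appearing in the proof of Theorem \ref{thm:main}.

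The inductive step, however, has a genuine gap, and you have located it yourself without closing it. Your correction factor $e=e_1\cdots e_{n-1}$, built from pairwise solutions $e_j$ for $(v_j,v_n)$, does not control the cross terms: nothing in the two-valuation construction of $e_j$ bounds $v_i(e_j)$ from below for $i\neq j$, so $v_i(e)=\sum_j v_i(e_j)$ may fail to be large enough even though $v_i(e_i)$ is. Forcing each $e_j$ to be additionally integral at all other $v_i$ is itself an $n$-fold approximation problem of the same shape, so the circularity you flagged reappears one level down; ``being careful that the product retains the required concentration at $v_n$'' is precisely the content of the theorem, not a detail. The paper avoids this entirely by a different induction (proof of Proposition \ref{prop:finitary_approximation}, which generalizes Theorem \ref{thm:Ribenboim}): partition $\{v_1,\dots,v_n\}$ into dependence classes; if all are independent, apply Theorem \ref{thm:pairwise_indep_fin_approx}; otherwise pick a maximal dependent subset $v_1,\dots,v_k$ with nontrivial common coarsening $w$, solve the induced problem in $Kw$ by induction, lift, and then \emph{replace} the $k$ conditions at $v_1,\dots,v_k$ by the single condition $x\in x'+z_1\mathfrak{m}_w$, which implies all of them. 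This strictly decreases the number of conditions and never requires a global ``indicator'' element $e$. If you want to salvage your peel-off strategy, you would need to prove a simultaneous version of your local construction (an analogue of Lemma \ref{lem:pairwise_indicator_functions} for finitely many pairwise incomparable valuations without any uniformity hypothesis), which is essentially Ribenboim's Lemme 11; as written, your proposal assumes rather than proves it.
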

Thirdly, and most pertinently for us, one can find approximation theorems for 
possibly infinite subsets of the spaces $\Sord(K)$ of orderings or
 $\Sval(K)$ of valuations on $K$,
under
some
assumptions on
(U)niformity, (T)opology and (I)ndependence (our presentation).
The first occurence of such a `block approximation' theorem seems to be
\cite[Theorem 1]{ErshovTotallyRealFieldExtns},
but there, like
in many subsequent works including
\cite{PrestelOnTheAxiomatisationOfPRCFields,
FHV, DarniereLGP, ErshovTehran, HJP},
these results apply only to very special fields
satisfying some geometric local-global principle, 
an example of which is the following:

\begin{theorem}[Prestel 1985\footnote{See \cite[p.~354]{PrestelOnTheAxiomatisationOfPRCFields}, also reproven in \cite[Corollary 1.3]{FHV}.}]\label{thm:intro_Prestel}
Let $S_1,\dots,S_n\subseteq\Sord(K)$ pairwise disjoint.
\begin{enumerate}
    \item[$\condT$] Assume that each $S_i$ is compact in the Harrison topology on $\Sord(K)$ (cf.~Section \ref{sec:topologies}).
    \item[$\condI$] Assume that $K$ is pseudo-real closed (PRC), that is, every geometrically integral $K$-variety that has rational points over every real closure of $K$ has a $K$-rational point.\footnote{The PRC property can indeed be seen as a very strong independence assumption, since it implies in particular that distinct orderings on $K$ induce distinct topologies, cf.~\cite[p.~353]{PrestelOnTheAxiomatisationOfPRCFields}.}
\end{enumerate}
Then there exists $x \in K$ with 
$$
 (x-x_i)^2 < z_i^2 \text{ for all orderings in } S_i, \mbox{ for } i=1,\dots,n.
$$ 
\end{theorem}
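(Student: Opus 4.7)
The plan is to apply the PRC hypothesis to an auxiliary $K$-variety whose rational points encode the desired approximation. Since the Harrison topology makes $\Sord(K)$ into a Boolean space (compact, Hausdorff, totally disconnected), the pairwise disjoint compact subsets $S_i$ admit pairwise disjoint Harrison-clopen supersets, and by a further Boolean refinement we may replace the $S_i$ by finite disjoint unions of basic Harrison-clopens $V_{i,j} = H(a_{i,j,1},\dots,a_{i,j,p_{i,j}})$, the whole collection across $i,j$ being pairwise disjoint.

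Next I would encode the positivity condition ``$z_i^2-(X-x_i)^2>0$ on $V_{i,j}$'' in Positivstellensatz style. Introduce variables $X$, $T_{i,j}$, and $Y_{i,j,I,l}$ indexed by $(i,j)$, subsets $I\subseteq\{1,\dots,p_{i,j}\}$, and $l\in\{1,\dots,m\}$ for sufficiently large $m$, and consider the affine $K$-variety $W$ cut out, for each pair $(i,j)$, by
\[
\bigl(z_i^2-(X-x_i)^2\bigr)T_{i,j}^2 \;=\; 1+\sum_{I}\Bigl(\sum_l Y_{i,j,I,l}^2\Bigr)\prod_{k\in I}a_{i,j,k},
\]
together with the open condition $T_{i,j}\neq 0$. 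A $K$-point of $W$ yields an $x\in K$ such that at every $<\in V_{i,j}$ the right-hand side is manifestly positive ($1$ plus a nonnegative combination of products of positive $a_{i,j,k}$'s), forcing $(x-x_i)^2<z_i^2$ at $<$.

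To verify the PRC hypotheses, note that for any ordering $<$ of $K$ the pairwise disjointness ensures $<$ lies in at most one $V_{i_0,j_0}$. Setting $X=x_{i_0}$ in $K^{\mathrm{rc},<}$ (or arbitrary if $<$ lies in no $V_{i,j}$), a direct case analysis --- using that for every other $(i,j)$ some $a_{i,j,k}<0$ at $<$, so that the corresponding right-hand side can assume any real value by suitable choice of the $Y$-variables --- produces $K^{\mathrm{rc},<}$-solutions of each equation. Geometric integrality of $W$ would follow from each defining polynomial being absolutely irreducible (essentially of the form $Y^2=\text{(non-square polynomial)}$ once $m\geq 2$), combined with linear disjointness of the function fields of the individual pieces over $K(X)$, holding because different $(i,j)$ equations involve disjoint auxiliary variables. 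The PRC property then yields $W(K)\neq\emptyset$, producing the required $x$.

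The main obstacle is arranging $W$ so that both the local-solubility case analysis and the geometric integrality check go through cleanly; the key insight making this possible is the pairwise disjointness of the basic clopens $V_{i,j}$, ensuring that in any real closure at most one of the ``approximation constraints'' is genuinely active, while the negative sign of some $a_{i,j,k}$ in the inactive ones allows the corresponding right-hand sides to take arbitrary real values. An alternative route, avoiding some of this technical bookkeeping, is to first exploit that PRC fields enjoy the strong approximation property (itself a consequence of PRC) and thereby represent each separating clopen as $H(u_i)$ for a single $u_i\in K^\times$, reducing the system to a single equation per index $i$.
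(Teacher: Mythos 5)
Your proposal takes a genuinely different route from the one this paper follows. Here the statement is quoted as a known theorem of Prestel and is recovered as a special case of Theorem \ref{thm:intro_orderings_compatible} (hence of Theorem \ref{thm:main}, cf.\ Remark \ref{rem:thm_main}): the PRC hypothesis is used \emph{only} for its soft consequence that distinct orderings on $K$ induce distinct topologies, which makes condition \ref{thm:intro_orderings_compatible}\condI{} automatically satisfied; all of the actual approximation work is then done by the valuation-theoretic machinery of Sections \ref{sec:constructible}--\ref{sec:mainthm} (Assumption \ref{assumption:f_and_t} with $f=\tfrac12(X^2+1)$ and $t=\tfrac12$ as in Example \ref{ex:V}(3), plus the compactness arguments of Lemma \ref{lem:pairwise_indicator_functions}), with no algebraic geometry at all. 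You instead invoke the local-global principle directly, encoding the approximation problem into an auxiliary $K$-variety in Positivstellensatz style; this is essentially the original argument of \cite{PrestelOnTheAxiomatisationOfPRCFields} and \cite{FHV}. Your approach buys a self-contained proof from the PRC definition; the paper's approach buys the insight that the geometric hypothesis can be isolated into a pure independence condition, so that the theorem extends to arbitrary fields satisfying \ref{thm:intro_orderings_compatible}\condI.

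The outline of your argument (Boolean reduction to pairwise disjoint basic clopens, the encoding, the case analysis for points over real closures) is sound, but the justification you offer for geometric integrality of $W$ would not survive as written. Absolute irreducibility of each defining polynomial does not by itself make the intersection integral, and ``linear disjointness of the function fields over $K(X)$ because different equations involve disjoint auxiliary variables'' is a non sequitur: the equations $Y_1^2=X$ and $Y_2^2=X$ involve disjoint variables, yet their fibre product over the $X$-line is reducible. (Relatedly, a hypersurface of the form $Y^2=g(X)$ with $g$ a non-square is absolutely irreducible as a $K$-variety but its generic fibre over $\mathbb{A}^1_X$ is \emph{not} geometrically integral over $K(X)$, which is the property the fibre-product argument actually needs.) The repair is available: for $m\geq 2$ the generic fibre of each hypersurface over $\mathbb{A}^1_X$ is an affine quadric $q=1$ with $q$ diagonal of rank at least $2$ over $K(X)$, hence geometrically integral over $K(X)$; the fibre product of geometrically integral $K(X)$-varieties is again geometrically integral, and a dimension count excludes extra components of the complete intersection lying over special values of $X$. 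With that step fixed (or with your closing suggestion of first using that PRC fields satisfy SAP, so each separating clopen becomes a single Harrison set $H(u_i)$ and the bookkeeping collapses), the argument goes through.
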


However, for valuations, Darni\`ere \cite{DarniereThese} and Ershov \cite{ErshovMultiValuedFields}  proved results for general fields. 

\begin{theorem}[Ershov 2001\footnote{See discussion in Section \ref{sec:Ershov} how this follows from the results in \cite{ErshovMultiValuedFields}.}]\label{thm:intro_Ershov_uniformizer}
  Let $S_1,\dots,S_n\subseteq\Sval(K)$ pairwise disjoint.
\begin{enumerate}
    \item[$\condU$]Assume that there exists a common uniformizer $\pi\in K$ of all $v\in S_1\cup\dots\cup S_n$.
    \item[$\condT$] Assume that each $S_i$ is compact\footnote{We use ``compact'' to mean what other sources call ``quasi-compact'', i.e.\ there is no implication of being a Hausdorff space.} in the Zariski topology on $\Sval(K)$ (cf.~Section \ref{sec:topologies}).
    \item[$\condI$] Assume that 
    the elements of $ S_1\cup\dots\cup S_n$ are pairwise independent.
\end{enumerate}
Then there exists $x\in K$ with
$$
v(x-x_i)\geq v(z_i)\mbox{ for all }v\in S_i,\mbox{  for }i=1,\dots,n.
$$
\end{theorem}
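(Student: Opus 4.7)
The plan is to reduce the problem to a partition-of-unity construction on the blocks $S_1,\dots,S_n$, and then to build this partition by combining Theorem~\ref{thm:Bourbaki} with compactness arguments; the partition step is the hard part.

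\emph{Step 1 (Normalization).} Condition~$\condU$ forces the maximal ideal of $\mathcal{O}_v$ to be principal for every $v\in S_1\cup\cdots\cup S_n$, so each such $v$ is discrete of rank~$1$ with uniformizer $\pi$; rescale so that $v(\pi)=1$ for every such $v$. For each $m\in\mathbb{Z}$, the set $\{v\in S_i:v(z_i)=m\}=U_{z_i\pi^{-m}}\cap U_{\pi^m z_i^{-1}}$ is Zariski-clopen in $S_i$, and by $\condT$ only finitely many $m$ arise. Refining each $S_i$ into these clopen pieces (and relabelling the resulting finer block decomposition, which inherits $\condU$, $\condT$ and $\condI$) reduces the problem to the case $z_i=\pi^{k_i}$ for integers $k_i$; after replacing $k_i$ by $\max(k_i,0)$ we may assume $k_i\geq 0$.

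\emph{Step 2 (Using a partition of unity).} The heart of the argument is to produce, for every sufficiently large $M\in\mathbb{N}$ and each $i$, an element $e_i\in K$ satisfying
\begin{equation*}
 v(e_i-1)\geq M\text{ for all }v\in S_i,\qquad v(e_i)\geq M\text{ for all }v\in\bigcup_{j\neq i}S_j.
\end{equation*}
Granting this, set $x=\sum_i e_i x_i$. For $v\in S_j$ one has $x-x_j=(e_j-1)x_j+\sum_{i\neq j}e_ix_i$, and hence $v(x-x_j)\geq M+\min_i v(x_i)$. Since $\{U_{x_i\pi^N}\}_{N\geq 0}$ is an open cover of the compact $S_j$, the quantity $\min_i v(x_i)$ is bounded below on $S_j$, and $M$ can be chosen large enough that $v(x-x_j)\geq k_j=v(z_j)$ uniformly for $v\in S_j$.

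\emph{Step 3 (Main obstacle: constructing $e_i$).} Fix $i$ and set $A=S_i$, $B=\bigcup_{j\neq i}S_j$, two disjoint compact subsets of $\Sval(K)$ of pairwise independent valuations all sharing the uniformizer $\pi$. By $\condI$, for every $(v,w)\in A\times B$ the pair $\{v,w\}$ is independent, so Theorem~\ref{thm:Bourbaki} applied to $\{v,w\}$ yields $f_{v,w}\in K$ with $v(f_{v,w}-1)\geq M$ and $w(f_{v,w})\geq M$. The strategy is a double compactness argument: alternately cover $A$ and $B$ by Zariski-basic opens on which some fixed $f_{v,w}$ behaves uniformly, extract finite subcovers, and assemble $e_i$ from the resulting finite family $f^{(1)},\dots,f^{(r)}$ through an inclusion--exclusion expression such as $e_i=1-\prod_k(1-f^{(k)})$. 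The delicate point—and the main technical obstacle of the theorem—is to control the valuations of the individual factors $1-f^{(k)}$ uniformly on the compact sets $A$ and $B$, rather than merely at the finitely many sample points where Bourbaki was invoked, so that the product telescopes correctly at every $v\in A$ and $w\in B$. Orchestrating conditions $\condU$, $\condT$ and $\condI$ to achieve this uniformity is the core of Ershov's argument in~\cite{ErshovMultiValuedFields}.
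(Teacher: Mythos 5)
Your Steps 1--2 (reduce to $z_i=\pi^{k_i}$ and to constructing near-idempotents $e_i$) are a reasonable framing, but the proof has a genuine gap exactly where you say it does: Step 3 is not an argument, it is a statement of the problem. The product/inclusion--exclusion expression $e_i=1-\prod_k(1-f^{(k)})$ cannot work as sketched, because each $f^{(k)}=f_{v,w}$ is controlled only on a Zariski-neighbourhood of the two sample points where Theorem \ref{thm:Bourbaki} was invoked; at any other $w'\in B$ the value $w'(f^{(k)})$ can be arbitrarily negative, so the product does not telescope. No amount of refining the double compactness argument fixes this, because the missing ingredient is not topological: one needs a way to combine finitely many elements $b_1,\dots,b_m$ into a single element behaving like ``$\min_k v(b_k)$'' simultaneously for \emph{all} $v$ in the compact sets, and this is precisely where condition \condU{} must enter. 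In this paper that role is played by the function $\phi$ of Proposition \ref{prop:phiminmax}/Corollary \ref{cor:phimin}, built from the polynomial $f=X^2-\pi$ of Example \ref{ex:V}(2); in Ershov's treatment it is the Prüfer/Bézout property of the holomorphy ring $\bigcap_{v\in S}\mathcal{O}_v$ (cf.\ Remark \ref{rem:alternative.phi} and Section \ref{sec:Ershov}). Your Step 3 never uses \condU{} beyond the normalization $v(\pi)=1$, and Example \ref{ex:justifying_U} shows the conclusion is false for Zariski-compact sets of pairwise independent valuations without such a hypothesis, so any proof must use it substantively at exactly this point. For the record, the paper does not reprove this statement from scratch: it is obtained either by citing Ershov's Proposition 2.6.2 applied to ${\rm S}_\pi^1(K)$ (the uniformizer making the holomorphy ring Prüfer), or as a special case of Theorem \ref{thm:main} in Situation $\mathcal{O}$ with $t=\pi$; the analogue of your $e_i$ is produced there in two stages (Lemmas \ref{lem:pairwise_indicator_functions} and \ref{lem:approximation_lemma}), with $\phi$ doing the gluing that your sketch omits.

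A secondary issue: the paper's reading of ``common uniformizer'' is membership in ${\rm S}_\pi^1(K)$, which allows valuations of rank greater than one (only $\mathbb{Z}$ need be a convex subgroup of the value group). For such $v$ the values $v(z_i)$ and $v(x_i)$ need not lie in the convex subgroup generated by $v(\pi)$, so your clopen decomposition $\{v:v(z_i)=m\}$ in Step 1 need not cover $S_i$, and the cover $\{U_{x_i\pi^N}\}_N$ in Step 2 need not exhaust $S_j$. This is repairable (e.g.\ by first coarsening modulo the relevant convex subgroups), but as written those reductions are only valid for discrete rank-one valuations.
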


The aim of this paper is to prove approximation theorems that generalize Theorem \ref{thm:Bourbaki} in these directions simultaneously; they apply to infinite sets of valuations and orderings, without assuming pairwise independence
or the presence of a local-global principle. 
Our main result in this direction is
Theorem \ref{thm:main},
which is in particular a common generalization of
Theorem \ref{thm:intro_Prestel} and Theorem \ref{thm:intro_Ershov_uniformizer}.
Instead of attempting to explain the technical assumptions of that main theorem here,
we now simply quote three instances of it:

\begin{theorem}
\label{thm:intro_constructible}
Let $S_1,\dots,S_n\subseteq\Sval(K)$ pairwise disjoint.
    \begin{enumerate}
        \item[$\condU$] Assume that there exists a common uniformizer $\pi\in K$ of all $v\in S_1\cup\dots\cup S_n$.
        \item[$\condT$] Assume that each $S_i$ is compact in the Zariski topology on $\Sval(K)$.
        \item[$\condI$] Assume that 
        for any valuation $w$ on $K$ with a refinement in some $S_i$ and a refinement in some $S_j$, we have      
         $w(x_i-x_j) \geq w(z_i) = w(z_j)$.
    \end{enumerate}
Then there exists $x\in K$ with
$$
    v(x-x_i)>v(z_i)\mbox{ for all }v\in S_i,\mbox{  for }i=1,\dots,n.
$$
\end{theorem}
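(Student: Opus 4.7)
My plan is to adapt the proof of Ershov's theorem (Theorem \ref{thm:intro_Ershov_uniformizer}), substituting Ribenboim's theorem (Theorem \ref{thm:Ribenboim}) for the classical Bourbaki theorem at the finite-approximation step. This substitution should work precisely because our weaker hypothesis (I) is tailored exactly to Ribenboim's compatibility condition, just as Ershov's pairwise independence is tailored to Bourbaki's.

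First I would use the uniformizer (U) and compactness (T) to bound tolerances uniformly. For each $v\in S:=S_1\cup\dots\cup S_n$, $v(\pi)$ is the least positive element of $v(K^\times)$; the Zariski-open sets $\{v\in S_i:v(\pi^N/z_i)\ge 0\}=\{v\in S_i:v(z_i)\le Nv(\pi)\}$, for $N\in\mathbb{Z}$, form an ascending cover of $S_i$, so (T) yields an integer $N_i$ with $v(z_i)\le N_iv(\pi)$ for all $v\in S_i$. It therefore suffices to find $x\in K$ satisfying $v(x-x_i)\ge(N_i+1)v(\pi)$ for every $v\in S_i$ and each $i$.

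Next, the compactness of each $S_i$ in the Zariski topology, together with the uniform tolerance just obtained, should allow the infinite approximation problem to be reduced to one on a finite subset $F=\bigcup_i F_i$, $F_i\subseteq S_i$---this being the same style of reduction as in Ershov's proof of Theorem \ref{thm:intro_Ershov_uniformizer}. To the resulting finite system I then apply Ribenboim (Theorem \ref{thm:Ribenboim}): pairwise incomparability is arranged by discarding comparable elements of $F$, and the compatibility condition required by Ribenboim---namely that for any common coarsening $w$ of $v\in F_i$ and $v'\in F_j$ (with $i\ne j$) one has $w(x_i-x_j)\ge w(z_i)=w(z_j)$---is exactly our hypothesis (I), applied with $v\in S_i$ and $v'\in S_j$ as the required refinements of $w$. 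Ribenboim thus produces the desired $x$, which by the reduction satisfies the approximation on all of $S$.

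The hard part will be the infinite-to-finite reduction: executing it rigorously requires a careful Zariski-compactness argument that exploits $\pi$ to bound the relevant data, and is exactly the kind of globalisation that the paper's theory of localities (underlying Theorem \ref{thm:main}) is designed to package. Once this reduction is in hand, the replacement of Bourbaki by Ribenboim at the finite step goes through smoothly, with hypothesis (I) matching Ribenboim-compatibility precisely for the refinement data coming from $S_i$ and $S_j$.
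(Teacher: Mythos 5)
Your high-level intuition (Ershov's argument with Ribenboim substituted for Bourbaki at the two-valuation level) is in the right spirit, but the step you defer as ``the hard part'' --- the infinite-to-finite reduction --- is not a technical detail to be filled in later: in the form you propose it is structurally impossible, and it is precisely where all the content of the theorem lies. There is no way to choose finite subsets $F_i\subseteq S_i$ \emph{in advance} so that any solution of the approximation problem on $F=\bigcup_i F_i$ automatically solves it on $S=\bigcup_i S_i$: for a given $x$, the set of $v\in S_i$ with $v(x-x_i)>v(z_i)$ is some proper subset containing $F_i$ in general, and compactness of $S_i$ gives no leverage because you would need the finite subcover to depend on the solution $x$, which is circular. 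What actually happens (both in Ershov's proof and in the paper's, via Lemma \ref{lem:pairwise_indicator_functions}) is different in kind: for each pair $(v,v')\in S_i\times S_j$ one produces a \emph{candidate element} $b_{vv'}$ by a two-locality Ribenboim-type statement (Proposition \ref{prop:RibenboimLocalities}); compactness is then used to extract a finite subcover of the open sets on which these candidates work; and --- crucially --- the finitely many candidates are \emph{combined into a single element} by the function $\phi$, the homogenisation of $ft^{-1}$ with $f=X^2-\pi$ (Proposition \ref{prop:phiminmax}, Corollary \ref{cor:phimin}), which acts as a substitute for ``$\min$/$\max$ of valuations'' uniformly over $S$. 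This combination step is where hypothesis \condU{} is really used, and your proposal has no counterpart to it; without it (or an equivalent such as a B\'ezout/Pr\"ufer property of the holomorphy ring, cf.\ Remark \ref{rem:alternative.phi}), the argument cannot close. The resulting element is then converted into an approximate indicator function via $x=a_db^{-d}f(b^{-1})^{-1}$ (Lemma \ref{lem:approximation_lemma}), again using $f$ --- not by solving a finite Ribenboim problem.

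Two further points. First, your ``uniform tolerance'' step is already flawed: valuations in ${\rm S}_\pi^1(K)$ may have rank greater than $1$ (the value group merely has $\mathbb{Z}$ as a convex subgroup with $v(\pi)=1$), so $v(z_i)$ need not satisfy $v(z_i)\leq Nv(\pi)$ for any $N$, and the ascending family of Zariski-open sets you write down need not cover $S_i$. Second, the open sets that actually arise in the compactness argument are of the form $\{v: b\in\mathfrak{m}_v\}$, which are open in $\mathcal{T}_{\rm Zar^*}$ rather than in $\mathcal{T}_{\rm Zar}$; one needs the coincidence of the topologies on ${\rm S}_\pi^1(K)$ guaranteed by the common uniformizer (Example \ref{rem:topology.coincidence}) to pass from the hypothesis \condT{} to the compactness actually used. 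Your identification of hypothesis \condI{} with Ribenboim's compatibility condition at the two-valuation level is correct and is indeed how the paper uses it, but only inside Proposition \ref{prop:RibenboimLocalities}, not as the finishing step of a global reduction.
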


\begin{theorem}\label{thm:intro_Zariski_dual_closed}
Let $S_1,\dots,S_n\subseteq\Sval(K)$, not necessarily disjoint.
    \begin{enumerate}
        \item[$\condU$] Assume there exists a monic polynomial $f\in K[X]$ 
        such that for every $v\in  S_1\cup\dots\cup S_n$, $f$ has coefficients in the valuation ring of $v$
        and the reduction of $f$ has no zero in the residue field of $v$.
        \item[$\condT$] Assume that each $S_i$ is closed in the Hochster dual of the Zariski topology on $\Sval(K)$.
        \item[$\condI$]  Assume that for any $i$, $j$ and $w\in S_i\cap S_j$, 
        we have $w(x_i-x_j) \geq w(z_i) = w(z_j)$.
    \end{enumerate}
Then there exists $x\in K$ with
$$
v(x-x_i)\geq v(z_i)\mbox{ for all }v\in S_i,\mbox{  for }i=1,\dots,n.
$$
\end{theorem}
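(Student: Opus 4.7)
The plan is to obtain Theorem \ref{thm:intro_Zariski_dual_closed} as a concrete instance of the main Theorem \ref{thm:main}, in which case the task reduces to checking that (U), (T), (I) here fit the three abstract hypotheses of \ref{thm:main}. Independently of that framework, I would proceed in three steps, modelled on the chain of generalizations from Theorem \ref{thm:Bourbaki} through Theorem \ref{thm:Ribenboim} to Theorem \ref{thm:intro_Ershov_uniformizer}.

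\textbf{Topology: finite reduction.} For any candidate $x \in K$, the locus $\{v \in \Sval(K) : v(x - x_i) \geq v(z_i)\}$ is Zariski constructible and hence open in the Hochster dual topology on $\Sval(K)$. Since each $S_i$ is closed in the Hochster dual, it is also closed in the patch (constructible) topology, and therefore patch-compact. A standard compactness argument reduces the problem to approximating with respect to finite subsets $S_i' \subseteq S_i$, while preserving (I) and (U).

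\textbf{Independence: upgrading (I) to Ribenboim's compatibility.} The Hochster-dual closedness of each $S_i$ means, under the specialization order on $\Sval(K)$, that $S_i$ is closed under taking coarsenings. Hence any common coarsening $w$ of $v_i \in S_i$ and $v_j \in S_j$ automatically lies in $S_i \cap S_j$, so (I) applies to $w$ and yields exactly the compatibility hypothesis of Ribenboim's Theorem \ref{thm:Ribenboim}. After removing comparable valuations in the finite reduction (using this upgraded (I) to transfer constraints from a coarsening to any of its refinements), the remaining finite set satisfies Ribenboim's hypotheses and admits a single approximating element.

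\textbf{Uniformity: construction via $f$.} The polynomial $f$ replaces a common uniformizer in the construction of the approximation element. Where a uniformizer $\pi$ furnishes bump elements $\pi^N/(1+\pi^N)$ that are $v$-adically close to $0$ at selected valuations and to $1$ elsewhere, the assumption that $\bar f$ has no root in the residue field of any $v \in \bigcup_i S_i$ ensures $f(a) \in \mathcal{O}_v^\times$ for every $a \in \mathcal{O}_v$; evaluation at carefully chosen arguments then provides a library of elements with uniformly controlled valuations from which to build indicator-like elements $e_i$. I expect the main obstacle to be this construction, and the verification that the resulting $x = \sum_i e_i x_i$ satisfies $v(x - x_i) \geq v(z_i)$ for all $v \in S_i$ by combining the finite Ribenboim output with the separating properties of $f$. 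The non-strict conclusion here is consistent with $f$ being weaker data than a common uniformizer, in contrast to the strict conclusion of Theorem \ref{thm:intro_constructible}.
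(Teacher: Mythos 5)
Your opening route---deducing the theorem as the instance of Theorem~\ref{thm:main} in Situation $\mathcal{O}$ with $t=1$---is exactly the paper's proof (Remark~\ref{rem:thm_main}): condition \condU{} yields Assumption~\ref{assumption:f_and_t} by Example~\ref{ex:V}(1), Hochster-dual closedness yields compactness in the Zariski topology by Remark~\ref{rem:topologies}, pairwise $1$-independence of valuations is automatic, and your observation that a Hochster-dual closed set is closed under coarsenings, so that any common coarsening of elements of $S_i$ and $S_j$ already lies in $S_i\cap S_j$, is precisely how the paper converts the stated \condI{} into \ref{thm:main}\condI. Had you simply carried out these verifications, you would be done.

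The self-contained three-step argument you sketch instead has a genuine gap in the first step. There is no ``standard compactness argument'' reducing the problem to finite subsets $S_i'\subseteq S_i$: knowing that every finite sub-problem is solvable does not produce one $x$ working on all of $S_i$, since the finite solutions vary with the subset and $K$ itself carries no compactness to pass to a limit. What patch-compactness actually provides is a finite cover of each $S_i$ by clopen pieces on each of which some partial solution works; the whole difficulty---and the actual role of the polynomial $f$---lies in \emph{gluing} these finitely many partial solutions into a single one. In the paper this gluing is Lemma~\ref{lem:pairwise_indicator_functions}, where the finite subcovers are interleaved with the min/max-type function $\phi$ of Proposition~\ref{prop:phiminmax}, which is built from $f$. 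Your Step~3 assigns $f$ the wrong job: once the problem genuinely involves only finitely many valuations, Ribenboim's Theorem~\ref{thm:Ribenboim} solves it with no hypothesis \condU{} at all, so if your Step~1 were valid, $f$ would be superfluous---yet the paper shows (Example~\ref{ex:justifying_U} and the surrounding discussion) that \condU{} cannot be substantially weakened in Situation $\mathcal{O}$. The reduction to finitely many data and the use of $f$ therefore cannot be separated in the order you propose; they must be performed together, as in Lemmas~\ref{lem:pairwise_indicator_functions}--\ref{lem:induction}.
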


\begin{theorem}\label{thm:intro_orderings_compatible}
Let $S_1,\dots,S_n\subseteq\Sord(K)$ pairwise disjoint.
\begin{enumerate}
    \item[$\condT$] Assume that each $S_i$ is compact in the Harrison  topology on $\Sord(K)$.
    \item[$\condI$] Assume that if $w$ is a valuation on $K$ whose valuation ring $\mathcal{O}_w$ is convex both with respect to
an ordering in $S_i$ and with respect to an ordering in $S_j$, $i\neq j$, then these two orderings induce distinct orderings on the residue field of $w$,
and $w(x_i-x_j) \geq w(z_i) = w(z_j)$.
\end{enumerate}
Then there exists $x \in K$ with 
$$
 (x-x_i)^2 < z_i^2 \text{ for all orderings in } S_i, \mbox{ for } i=1,\dots,n.
$$ 
\end{theorem}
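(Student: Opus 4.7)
The plan is to deduce this theorem as an instance of the main Theorem \ref{thm:main} by encoding each ordering together with its compatible valuations as a family of \emph{localities}. Concretely, for an ordering $P$ on $K$, the subrings of $K$ convex with respect to $P$ form a chain, each yielding a valuation $w$ whose valuation ring $\mathcal{O}_w$ is convex with respect to $P$; together with the induced ordering $\bar P$ on the residue field $Kw$, each such pair $(w,\bar P)$ is the natural ordered locality at $P$. Let $\tilde S_i$ denote the set of all localities arising in this way from orderings in $S_i$, and apply Theorem \ref{thm:main} to the family $\tilde S_1,\dots,\tilde S_n$.

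Verifying the topology hypothesis $\condT$ of the main theorem should follow from continuity of the projection sending an ordered locality to its underlying ordering on $K$: compactness of $S_i$ in the Harrison topology should transfer to compactness of $\tilde S_i$ in the appropriate topology on the space of localities. Here it may be necessary to check that the fibres of this projection are themselves well-behaved, which is plausible since the convex subrings of a given ordered field form a chain.

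The heart of the proof is the independence hypothesis $\condI$. A common refinement of localities attached to $P\in S_i$ and $Q\in S_j$ corresponds to a valuation $w$ with $\mathcal{O}_w$ convex with respect to both $P$ and $Q$ and with $P$, $Q$ inducing the \emph{same} ordering on $Kw$. For $i=j$ this forces $P=Q$ by disjointness of the $S_i$; for $i\neq j$, the hypothesis rules this out precisely because it requires the induced residue orderings to be distinct. In the remaining case, when $w$ has $\mathcal{O}_w$ convex with respect to $P\in S_i$ and $Q\in S_j$ (with $i\neq j$) but with different induced orderings, the compatibility inequality $w(x_i-x_j)\geq w(z_i)=w(z_j)$ of $\condI$ supplies exactly what Theorem \ref{thm:main} demands at such a shared coarsening.

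The main obstacle I anticipate is navigating the uniformity hypothesis $\condU$: there is no literal uniformizer in the purely ordered setting, so one must show either that a choice of $\pi$ is unnecessary in this specialisation (for instance because all relevant localities are archimedean-like at their trivial-valuation base) or that the main theorem is formulated flexibly enough to absorb this case. The final step is then to read off the ordering conclusion $(x-x_i)^2<z_i^2$ for each $P\in S_i$ from the locality-level conclusion at the trivial-valuation locality $(\mathrm{triv},P)$, where it becomes the archimedean comparison $|x-x_i|_P<|z_i|_P$ in $(K,P)$, which is equivalent to the stated inequality.
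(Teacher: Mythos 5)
Your overall strategy---deducing the statement from Theorem \ref{thm:main}---is the paper's, but two points do not work as written, and the first is a genuine gap. You explicitly leave condition \condU{} unresolved, hoping either that it is unnecessary here or that the main theorem ``absorbs'' this case. Neither is true: Assumption \ref{assumption:f_and_t} is the load-bearing hypothesis of Theorem \ref{thm:main} and must be verified for the set of orderings at hand. The paper does this in Example \ref{ex:V}(3): for any $S\subseteq\Sord(K)$ one takes $t=\tfrac12$ and $f=\tfrac12(X^2+1)$ and checks \ref{assumption:f_and_t}(i)--(iv) directly. This choice of $t$ is not cosmetic: since $\tfrac12$ lies in (and is a unit of) every convex valuation ring, $t$-independence of two orderings reduces exactly to strong incomparability, which is exactly what \ref{thm:intro_orderings_compatible}\condI{} (distinct induced orderings on residue fields of common convex valuation rings) supplies. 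Without exhibiting such an $f$ and $t$, the appeal to Theorem \ref{thm:main} does not go through.

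Second, your encoding is both unnecessary and harmful. Orderings are already localities: $\Sord(K)$ embeds into $\B(K)$ via $P\mapsto\{x:-1\leq_P x\leq_P 1\}$, and the coarsenings of $P$ in $\Sall(K)$ are precisely the convex valuation rings; the pairs $(w,\bar P)$ you introduce are not elements of $\Sall(K)$ (and by Baer--Krull such a pair does not even determine $P$). If instead you take $\tilde S_i$ to be $S_i$ together with all its valuation coarsenings, the application of Theorem \ref{thm:main} in Situation $\mathfrak{m}$ breaks: every $\tilde S_i$ contains the trivial valuation (and more generally any valuation ring convex for orderings in two different $S_i$'s lies in both), so the $\tilde S_i$ are neither disjoint nor pairwise incomparable---failing exactly the hypotheses of the main theorem in the situation the present statement is designed to permit. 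The correct move is to apply Theorem \ref{thm:main} in Situation $\mathfrak{m}$ directly to $S_1,\dots,S_n\subseteq\Sord(K)$, with the convex valuations entering only through hypothesis \ref{thm:main}\condI{} (``$w$ has a refinement in $S_i$'' means precisely ``$\mathcal{O}_w$ is convex with respect to some ordering in $S_i$''). With that reading, your translation of the compatibility condition, the transfer of compactness (Remark \ref{rem:Harrison} together with Remark \ref{rem:topologies}), and the final conversion of $x-x_i\in z_i\mathfrak{m}_v$ into $(x-x_i)^2<z_i^2$ are all fine.
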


Note that condition \ref{thm:intro_orderings_compatible}(I) is in particular always satisfied
when distinct orderings on $K$ induce distinct topologies,
and so this in particular generalizes Theorem \ref{thm:intro_Prestel}.
In Remark \ref{rem:thm_main}, we explain how to deduce Theorems
\ref{thm:intro_constructible},
\ref{thm:intro_Zariski_dual_closed}
and \ref{thm:intro_orderings_compatible}
from Theorem \ref{thm:main},
which is proven in sections \ref{sec:balls}--\ref{sec:mainthm}.

In Section \ref{sec:value} we deduce from Theorem \ref{thm:main} some related results, namely value approximation theorems and residue approximation theorems.
In the literature such results are often needed on the way to prove an approximation theorem, but we deduce them as corollaries of the main theorem.
We also discuss special cases and applications
to $p$-valuations
and the connection to the strong approximation theorem in global fields.

In Section \ref{sec:exceptional_balls}
we extend Theorem \ref{thm:main} even further by allowing finitely many
exceptional valuations or (possibly complex) absolute values
for which condition $\condU$ might in general not be satisfiable,
thus obtaining a result that in addition contains
Theorem \ref{thm:artin_whaples} and Theorem \ref{thm:Ribenboim}.

Finally, in Section \ref{sec:functions} we use our approximation theorems to discuss a related problem, namely approximation of the values of rational functions.
The results of Sections \ref{sec:value} and \ref{sec:functions} are crucial ingredients of our paper  \cite{ADF3},
and the results of Section \ref{sec:mainthm} are used in \cite{ADF2}.

\section{The space of localities}
\label{sec:balls}
\label{sec:topologies}

We start by setting up the language unifying valuations and orderings in which we will phrase and prove the main theorem.
For basics on valuations and orderings we refer to \cite{EP} and \cite{Efrat}.

Let $K$ be a field.
We denote by $\B(K)$ the set of subsets $\mathcal{O}\subseteq K$
that satisfy
$\mathcal{O}\cdot\mathcal{O}\subseteq\mathcal{O}$ and $\mathcal{O}\cup\mathcal{O}^{-1}=K$
(where for $X\subseteq K$ we write $X^{-1}:=\{x^{-1}:0\neq x\in X\}$).
%Elements of $\B(K)$ are called {\em balls}, and $K$ itself is the {\em trivial ball}.

\begin{example}\label{ex:balls}
\begin{enumerate}
 \item $K\in\B(K)$
    \item If $|.|$ is an absolute value on $K$, then $\{x\in K:|x|\leq 1\}\in\B(K)$.
    \item If $v$ is a valuation on $K$ with valuation ring $\mathcal{O}_v$, then $\mathcal{O}_v\in\B(K)$.
    \item If $\leq$ is an ordering on $K$, then $\{x\in K:-1\leq x\leq 1\}\in\B(K)$.
\end{enumerate}
\end{example}

For $\mathcal{O}\in\B(K)$ we write
$\mathcal{O}^\times=\mathcal{O}\cap\mathcal{O}^{-1}$ and $\mathfrak{m}_\mathcal{O}=\mathcal{O}\setminus\mathcal{O}^\times$.
The following basic properties are easily checked:
$1\in\mathcal{O}$, $0\in\mathfrak{m}_\mathcal{O}$,
$\mathcal{O}\mathcal{O}=\mathcal{O}$, $\mathfrak{m}_\mathcal{O}\mathcal{O}=\mathfrak{m}_\mathcal{O}$.
The collection of sets $x\mathcal{O}$, for $x\in K^{\times}$, is totally ordered by inclusion.

We equip $\B(K)$ with the {\em Zariski topology} $\mathcal{T}_{\rm Zar}$ with subbasis
$$
 \{ \mathcal{O}\in\B(K) : x \in\mathcal{O} \}, x\in K.
$$ 
The corresponding {\em constructible topology} $\mathcal{T}_{\rm con}$ by definition 
has subbasis
$$
 \{ \mathcal{O}\in\B(K): x\in\mathcal{O} \}, \{\mathcal{O}\in\B(K):x\in\mathfrak{m}_\mathcal{O}\}, x\in K.
$$
With the constructible topology, $\B(K)$ is a closed subspace of the Stone space $2^{K}$, so it is compact and ${\rm T}_{2}$.
Noting also that the Zariski topology is ${\rm T}_{0}$, it follows from
\cite[Proposition~7]{Hoc69} that the Zariski topology is spectral,
and it follows from \cite[Proposition 8]{Hoc69} that the ``Hochster dual'' $\mathcal{T}_{\rm Zar^*}$ of the Zariski topology with subbasis%
\footnote{By definition, $\mathcal{T}_{\rm Zar^{*}}$ has as a basis for the closed sets the compact open sets in $\mathcal{T}_{\rm Zar}$,
i.e.~the finite unions of finite intersections of sets of the form $\{\mathcal{O}\in\B(K) : x\in\mathcal{O}\}$, for $x\in K$.
It follows that $\{\mathcal{O}\in\B(K) : x\in\mathcal{O}\}$, for $x\in K$, is a subbasis for the closed sets of $\mathcal{T}_{\rm Zar^{*}}$.
This is equivalent to the description given above.
}
$$
 \{ \mathcal{O}\in\B(K) : x \in\mathfrak{m}_\mathcal{O} \}, x\in K,
$$ 
is spectral, see also
\cite[\S1.4]{DST}.

\begin{remark}\label{rem:topologies}
Consider the following properties of a subset $X\subseteq\B(K)$:
\begin{enumerate}
\item $X$ is closed in $\mathcal{T}_{\rm Zar}$ or in $\mathcal{T}_{\rm Zar^*}$
\item $X$ is closed in $\mathcal{T}_{\rm con}$
\item $X$ is compact in $\mathcal{T}_{\rm con}$
\item $X$ is compact in $\mathcal{T}_{\rm Zar}$ and in $\mathcal{T}_{\rm Zar^*}$
\end{enumerate}
Then
$$
 (1) \Rightarrow (2) \Leftrightarrow (3) \Rightarrow (4).
$$
\end{remark}

We denote by $\Sval(K)$ the space of equivalence classes of valuations on $K$,
by $\Sord(K)$ the space of orderings on $K$, and by $\Sabsval(K)$ the space of equivalence classes of absolute values of $K$.
We write 
$$
 \Sall(K) = \Sval(K) \cup \Sord(K) \cup \Sabsval(K)
$$ 
for the union of these three spaces, but identify rank-$1$ valuations and archimedean orderings with their associated absolute values.
We call elements of $\Sall(K)$ {\em localities}\footnote{We deviate slightly from the terminology of \cite[Chapter 7]{Efrat}, 
where only elements of $\Sval(K) \cup \Sord(K)$ are called localities.} of $K$
and denote them by letters like $v$.
In the case $v\in\Sval(K)$, we denote by $v$ also a fixed valuation
in that equivalence class,
in the case $v\in\Sord(K)$ we write $\leq_v$ for the corresponding order relation on $K$, 
and in the case $v \in\Sabsval(K)$ we write $\lvert\cdot\rvert_v$ for a fixed absolute value in that equivalence class.
As usual, we call an absolute value $v\in\Sabsval(K)$ \emph{complex}
if $|.|_v$ is neither ultrametric nor induced by an archimedean ordering, i.e.\ if $v \in \Sabsval(K) \setminus (\Sval(K) \cup \Sord(K))$.

For $v\in\Sall(K)$ we write
$$
 \mathcal{O}_v = 
 \begin{cases} 
   \{x\in K:v(x)\geq0\}, & v\in\Sval(K),\\ 
   \{x\in K: -1\leq_v x\leq_v1\}, & v\in\Sord(K),\\
   \{x\in K: |x|_v\leq1\}, & v\in\Sabsval(K)\\
 \end{cases}
$$
and
$$
 \mathfrak{m}_v = 
 \begin{cases} 
   \{x\in K:v(x)>0\}, & v\in\Sval(K),\\ 
   \{x\in K: -1<_v x<_v1\}, & v\in\Sord(K),\\
   \{x\in K: |x|_v<1\}, & v\in\Sabsval(K).\\
 \end{cases}
$$
Note that these definitions agree if 
$v\in\Sabsval(K)\cap\Sval(K)$ or $v\in\Sabsval(K)\cap\Sord(K)$.
The map $v\mapsto\mathcal{O}_v$ gives an embedding of $\Sall(K)$ into $\B(K)$,
and we identify $\Sall(K)$ with its image.
Under this identification, the trivial valuation $v_{\mathrm{trivial}}$ is identified with $K\in\B(K)$,
and for any locality $v$ we have
$\mathfrak{m}_v=\mathfrak{m}_{\mathcal{O}_v}$.

\begin{remark}
\label{rem:Harrison}
On $\Sval(K)$, the Zariski topology induces the usual Zariski topology with subbasis
$$
 \{v\in\Sval(K) : v(x)\geq 0\}, x\in K,
$$
 and the constructible topology induces the usual constructible (or patch) topology with subbasis
$$
 \{v\in\Sval(K) : v(x)\geq 0\},\{v\in\Sval(K) : v(x)>0 \}, x\in K.
$$
Both the Zariski topology and the constructible topology induce
on $\Sord(K)$
the usual Harrison-topology (see for example \cite[VIII,\S6]{Lam05}) with subbasis
$$
 \{v\in\Sord(K) : x \geq_v 0 \}, x\in K,
$$
since for example $x\mapsto\frac{1-x}{1+x}$ exchanges
the intervals $(-1,1]$ and $[0,\infty)$.
In particular, $\Sval(K)$ and $\Sord(K)$ are compact and therefore closed in $\Sall(K)$ in the constructible topology,
while $\Sabsval(K)$ is in general not.
\end{remark}

\begin{example}\label{rem:topology.coincidence}
For $\pi\in K^{\times}$ and $e\in\mathbb{N}$ we denote by ${\rm S}_\pi^{e}(K)\subseteq\Sval(K)$ the set of valuations $v$ on $K$
with discrete value group $\Gamma_v$ (say $\mathbb{Z}$ is a convex subgroup of $\Gamma_v$) and $0 < v(\pi)\leq e$.
Then the three topologies $\mathcal{T}_{\mathrm{Zar}}$, $\mathcal{T}_{\mathrm{Zar}^{*}}$, and $\mathcal{T}_{\mathrm{con}}$
induce the same topology on ${\rm S}_{\pi}^{e}(K)$.
To see this, note that
$v(x)\geq0\Leftrightarrow v(x^{e}\pi)>0$,
and
$v(x)>0\Leftrightarrow v(x^{e}\pi^{-1})\geq0$,
for all $v\in {\rm S}_{\pi}^{e}(K)$ and all $x\in K^{\times}$.
This applies in particular to sets $S\subseteq{\rm S}_{\pi}^{1}(K)$ of valuations with a common uniformizer.

Moreover,
${\rm S}_{\pi}^{e}(K)$ is $\mathcal{T}_{\rm con}$-closed in $\Sval(K)$,
since
$v\in\Sval(K)$ is in ${\rm S}_{\pi}^{e}(K)$ if and only if
$v(\pi)>0$ and
for all $x\in K^{\times}$
we have
either
$v(x^{-1})\geq0$
or
$v(x^{e}\pi^{-1})\geq0$.
Since $\Sval(K)$ is compact in the constructible topology, it follows that ${\rm S}_{\pi}^{e}(K)$ is also compact.
\end{example}

Any locality $v\in\Sall(K)$ induces on $K$ a field topology, which we call the {\em $v$-topology}, defined by taking $\{z\mathfrak{m}_v:z\in K^\times\}$ as a basis for the filter of neighbourhoods of $0$.
The sets ${\rm B}_v(x,z):=x+z\mathfrak{m}_v$ with $x\in K$, $z\in K^\times$
form a basis for this topology.

If $\mathcal{O}_v\subseteq\mathcal{O}_w$, then $v$ is a {\em refinement} of $w$ and $w$ is a {\em coarsening} of $v$.
This defines a partial order on $\Sall(K)$,
and $v$ and $w$ are {\em incomparable} if they are incomparable in this partial order.

\begin{remark}
If $v,w$ are valuations then this terminology is standard;
whereas if $v$ is an ordering, then it has no proper refinement, and a valuation $w$ is a coarsening of $v$ if and only if $\mathcal{O}_{w}$ is convex with respect to $\leq_{v}$.
If $v \in \Sabsval(K) \setminus \Sval(K)$, i.e.\ $v$ is an archimedean absolute value (real or complex), then $v$ has no proper refinement, and its only proper coarsening is the trivial valuation, so in particular $v$ is incomparable to any other nontrivial element of $\Sall(K)$.
\end{remark}

\begin{remark}\label{rem:T1}
Note that by definition $v$ refines $w$ if and only if $v$ is in the Zariski closure of $\{w\}$, which is the case if and only if $w$ is in the $\mathcal{T}_{\rm Zar^*}$-closure of $\{v\}$.
In particular, a subspace of $\Sall(K)$ satisfies the ${\rm T}_1$ separation axiom with respect to either $\mathcal{T}_{\rm Zar}$ or $\mathcal{T}_{\rm Zar^*}$ if and only if its elements are pairwise incomparable.
\end{remark}

For every $v$ and $w$ in $\Sall(K)$ there exists a finest common coarsening $v\vee w$ in $\Sall(K)$.
If $w$ is a valuation coarsening $v$, then $v$ induces a locality $\bar{v}\in\Sall(Kw)$,
where $Kw$ denotes the residue field of $w$.
Moreover, either $v$ and $\bar{v}$ are both valuations, or both orderings, or both complex absolute values (and $w$ is trivial in this case).

Two localities $v,w\in\Sall(K)$ are {\em independent}
if they induce distinct topologies on $K$.
If both $v$ and $w$ are nontrivial,
this is the case if and only if $v\vee w$ is the trivial valuation.
% and $v$ and $w$ are {\em independent} if $v\vee w$ is the trivial valuation.
Dependence of localities is an equivalence relation on $\Sall(K)$.
It follows from the fact that the set of coarsenings of a given locality is totally ordered that
%dependency is an equivalence relation on $\Sall(K)$, and
for any finite number of pairwise dependent nontrivial localities there is another nontrivial locality which is a coarsening of all of them.

We call $v$ and $w$ {\em strongly incomparable} if $v$ and $w$ are incomparable and induce 
distinct (and then automatically independent) localities on the residue field of their finest common coarsening $v\vee w$.
Two sets $S_1, S_2\subseteq\Sall(K)$ are called {\em incomparable} (respectively {\em  strongly incomparable}, or {\em independent}) if any element of $S_1$ is {incomparable} (respectively {strongly incomparable} or {independent}) to/from any element of $S_2$.

\begin{remark}
Any two non-trivial independent localities are strongly incomparable.
For two valuations, even incomparability is sufficient for strong incomparability,
while the failure of strong incomparability for orderings is described by the Baer--Krull theorem \cite[Theorem 2.2.5]{EP}.
\end{remark}

We have the following approximation theorem for pairwise independent localities, generalizing Theorem \ref{thm:Bourbaki}.
\begin{theorem}\label{thm:pairwise_indep_fin_approx}
  Let $v_1, \dotsc, v_n \in \Sall(K)$ be all nontrivial, let $x_1, \dotsc, x_n \in K$ and $z_1, \dotsc, z_n \in K^\times$.
  \begin{enumerate}
    \item[$\condI$] Assume that $v_1,\dots,v_n$ are pairwise independent.
  \end{enumerate}
  Then there exists $x\in K$ with
  \[ x-x_i \in z_i \mathfrak{m}_{v_i} \text{ for }i=1,\dots,n. \]
\end{theorem}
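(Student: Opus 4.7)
The plan is to follow the classical ``partition of unity'' strategy, reducing the problem to a known result about pairwise distinct V-topologies. The first step is to reduce to finding, for each $i \in \{1, \dots, n\}$, an element $e_i \in K$ that can be made arbitrarily close to $1$ in the $v_i$-topology and arbitrarily close to $0$ in each $v_j$-topology for $j \neq i$. Given such $e_i$ chosen close enough, the sum $x := \sum_{i=1}^n e_i x_i$ satisfies
\[
 x - x_i \;=\; (e_i-1)x_i \;+\; \sum_{j\neq i} e_j x_j \;\in\; z_i \mathfrak{m}_{v_i}
\]
for each $i$, by continuity of addition and multiplication in the field topology $\tau_{v_i}$, provided that the errors $(e_i-1)x_i$ and $e_j x_j$ are forced to lie in $\tfrac{1}{n}z_i\mathfrak{m}_{v_i}$.

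The second step is to construct $e_i$. The standard device is to produce $a \in K$ with $a \notin \mathcal{O}_{v_i}$ and $a \in \mathfrak{m}_{v_j}$ for all $j \neq i$, and to set $e_i := a^k / (1+a^k)$ for a suitably chosen $k$: then $e_i^{-1}-1 = a^{-k}$ is $v_i$-small while $e_i$ is controlled by $a^k$ in each $v_j$ for $j\neq i$. Since each nontrivial locality $v \in \Sall(K)$ induces a V-topology on $K$, and pairwise independence of the $v_i$ is exactly the statement that these V-topologies are pairwise distinct, the existence of such an $a$ follows from classical approximation results for pairwise independent valuations and absolute values, e.g.\ \cite[Cor.~27.14]{Warner} or \cite[(4.2) Corollary]{PZ}. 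To fold orderings into the same framework, I would use that the $v$-topology of an ordering coincides with that of its associated locality (the convex hull valuation in the non-archimedean case, the induced real absolute value in the archimedean case), and that independence is preserved under this correspondence.

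The main obstacle is the fact that the naive trick $a^k/(1+a^k)$ does not behave uniformly across types: in a non-archimedean ordered field, $a \notin \mathcal{O}_v$ does not imply $a^k \to \infty$ in the order topology (consider $a$ infinitesimally larger than $1$, so that $a^k$ remains bounded). I would circumvent this either by working throughout at the level of the associated V-topologies (where the classical mixed approximation result applies directly) or by choosing $a$ to be ``truly large'' in the order-theoretic sense, which one can arrange by first solving a coarser approximation problem to force $a$ to dominate a prescribed element. The remaining work is careful bookkeeping to translate between the differing conventions for $\mathfrak{m}_v$ across valuations, orderings, and absolute values, ensuring that the $z_i\mathfrak{m}_{v_i}$-neighbourhoods in the statement correspond to neighbourhoods of the underlying topology in each case.
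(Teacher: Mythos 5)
Your proposal is correct and in essence takes the same route as the paper: the decisive step in both is to replace each non-archimedean ordering by its associated natural valuation (its finest proper coarsening), whose topology coincides with the order topology and whose balls $z_i\mathfrak{m}$ sit inside the corresponding order-theoretic balls, and then to invoke the classical approximation theorem for pairwise independent valuations and absolute values, i.e.\ \cite[Corollary 4.2]{PZ} or \cite[Cor.~27.14]{Warner}. The partition-of-unity construction of the $e_i$ via $a^k/(1+a^k)$ is a standard but redundant re-derivation of what those cited results already give directly, so your argument is sound but longer than necessary.
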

\begin{proof}
%If $n=1$ we can take $x=x_1$, so assume that $n>1$.
If all $v_i$ are in $\Sval(K) \cup \Sabsval(K)$, then this is \cite[Corollary 4.2]{PZ}.
For any non-archimedean ordering $v_i$, consider its finest proper coarsening $\tilde{v}_i$, a nontrivial valuation. 
Since $z_i \mathfrak{m}_{\tilde{v}_i} \subseteq z_i \mathfrak{m}_{{v_i}}$, we can replace $v_i$ by $\tilde{v_i}$
and observe that the pairwise independence is preserved,
thereby reducing to the situation without non-archimedean orderings.
\end{proof}

The following is a generalization of a special case of Theorem \ref{thm:Ribenboim}:

\begin{proposition}\label{prop:RibenboimLocalities}
  Let $v_1,v_2\in\Sall(K)$ and let $z_1, z_2 \in K^\times$ such that for the finest common coarsening $w=v_1 \vee v_2$ we have $z_1 \mathcal{O}_{w} = z_2 \mathcal{O}_{w}$.
  \begin{enumerate}
     \item If $v_1$ and $v_2$ are strongly incomparable, 
      then there exists $z \in K^\times$ with $z \in z_1 \mathfrak{m}_{v_1}$ and $z^{-1}\in z_2^{-1} \mathfrak{m}_{v_2}$.

      \item If $v_1$ and $v_2$ are comparable or strongly incomparable (e.g.~at least one of them is a valuation), 
     then there exists $z \in K^\times$ with $z\in z_1 \mathcal{O}_{v_1}$ and $z^{-1}\in z_2^{-1} \mathcal{O}_{v_2}$.
   \end{enumerate}
\end{proposition}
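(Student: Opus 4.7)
The plan is to split both parts into the comparable and the strongly incomparable cases, and to deduce the strongly incomparable case of (2) from part (1) using $\mathfrak{m}_{v_i}\subseteq\mathcal{O}_{v_i}$. For the comparable case of (2), say $v_1$ refines $v_2$, so $w=v_2$ and $u:=z_2/z_1\in\mathcal{O}_w^\times=\mathcal{O}_{v_2}^\times$; then the choice $z=z_1$ works, since $z/z_1=1\in\mathcal{O}_{v_1}$ and $z_2/z=u\in\mathcal{O}_{v_2}$, and the opposite subcase is symmetric with $z=z_2$.

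For (1), my plan is to pass to the residue field of $w$. Since $v_1\neq v_2$ are strongly incomparable, they both properly coarsen to $w$, so $w$ must be a valuation (as orderings admit no proper refinement), and the induced $\bar v_1,\bar v_2\in\Sall(Kw)$ are nontrivial and independent by definition of strong incomparability. Writing $\bar u\in(Kw)^\times$ for the residue of $u=z_2/z_1\in\mathcal{O}_w^\times$, it suffices to find $y\in\mathcal{O}_w^\times$ with $y\in\mathfrak{m}_{v_1}$ and $u/y\in\mathfrak{m}_{v_2}$ and then set $z:=z_1 y$. To produce such a $y$, I would apply Theorem~\ref{thm:pairwise_indep_fin_approx} to the pair $\bar v_1,\bar v_2$ on $Kw$: first pick $t\in(Kw)^\times$ with $\bar u/t\in\mathfrak{m}_{\bar v_2}$ (that is, $t$ large enough at $\bar v_2$), then find $\bar y\in Kw$ simultaneously in $\mathfrak{m}_{\bar v_1}$ and close enough to $t$ at $\bar v_2$ that $\bar u/\bar y$ remains in $\mathfrak{m}_{\bar v_2}$. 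Such a $\bar y$ is automatically nonzero and so lifts to some $y\in\mathcal{O}_w^\times$.

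The main obstacle I anticipate is the final step, namely verifying that the residue conditions on $\bar y$ indeed transfer to the lift $y$. For valuations this is routine, because $v_i(y)$ and $v_i(u/y)$ lie in the convex subgroup of $\Gamma_{v_i}$ corresponding to $w$, and comparison with $0$ there is exactly what $\bar v_i$ measures. For orderings the argument is more delicate: the residue ordering is defined so that positivity of a nonzero element of $Kw$ is equivalent to positivity in the $v_i$-ordering of any lift in $\mathcal{O}_w^\times$, which I would apply to $\bar y+1$, $1-\bar y$, and the analogous expressions involving $\bar u/\bar y$ to obtain the required strict inequalities. Complex absolute values cannot occur here, since by the paper's convention they admit only the trivial coarsening, forcing $w$ trivial and reducing to a direct application of Theorem~\ref{thm:pairwise_indep_fin_approx} on $K$.
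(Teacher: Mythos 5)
Your proposal is correct and follows essentially the same route as the paper's proof: handle the comparable case of (2) directly, reduce the strongly incomparable case of (2) to (1), and for (1) pass to the residue field of $w$, use that the inversion condition at $\bar v_2$ is open in the $\bar v_2$-topology, apply Theorem \ref{thm:pairwise_indep_fin_approx} to the independent induced localities $\bar v_1,\bar v_2$, and lift. The only difference is that you spell out the lifting step (including the ordering case via the pushdown ordering) which the paper leaves implicit, and you normalize via $z=z_1y$ rather than assuming $z_1=1$; both are fine.
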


\begin{proof}
  Assume without loss of generality that $z_1 = 1$.
  Note that then in particular $z_1,z_2\in\mathcal{O}_w^\times$.

  In case (1), $w$ is a valuation and $v_1$ and $v_2$ induce independent $\bar{v}_1,\bar{v}_2\in\Sall(Kw)$.
  The nonempty set $\{ \bar{z} \in Kw^\times \colon \bar{z}^{-1} \in \bar{z}_2^{-1}\mathfrak{m}_{\bar{v}_2}\}$ is open with respect to the $\bar{v}_2$-topology on $Kw$ and hence contains a ball $\bar{x} + \bar{z}_2'\mathfrak{m}_{\bar{v}_2}$ with $\bar{x}, \bar{z}_2' \in Kw^\times$.
  By Theorem~\ref{thm:pairwise_indep_fin_approx} 
  there exists $\bar{z}\in Kw$ with $\bar{z}\in \bar{z}_1\mathfrak{m}_{\bar{v}_1}$ and $\bar{z} \in \bar{x} + \bar{z}_2'\mathfrak{m}_{\bar{v}_2}$, so $\bar{z}^{-1}\in \bar{z}_2^{-1}\mathfrak{m}_{\bar{v}_2}$.
  If $z\in\mathcal{O}_w$ is any lift of $\bar{z}$, then 
   $z\in z_1\mathfrak{m}_{v_1}$ and $z^{-1}\in z_2^{-1}\mathfrak{m}_{v_2}$.
   
   In case (2), if $v_1$ and $v_2$ are comparable, say $\mathcal{O}_{v_1}\subseteq\mathcal{O}_{v_2}$,
   then $w=v_2$ and thus $z_1\mathcal{O}_{v_2}=z_2\mathcal{O}_{v_2}$, so $z=z_1$ satisfies the claim in this case.
   Otherwise $v_1$ and $v_2$ are strongly incomparable and 
 the claim then follows from the stronger claim (1).     
\end{proof}

\begin{remark}[Shifting and scaling]
\label{rem:shifting_and_scaling}
The assumption that $z_1=1$ in the preceding proof is an example of two general simplification principles which will be used in our proofs in various places.
Consider an approximation problem on two sets of localities $S_1$, $S_2$, where we wish to find an element $x \in K$ such that $x-x_1 \in z_1 \mathcal{O}_v$ for all $v \in S_1$, and $x-x_2 \in z_2 \mathcal{O}_v$ for all $v \in S_2$, for some given elements $x_1, x_2 \in K$ and $z_1, z_2 \in K^\times$.
If $x_1 = x_2$, then we have the trivial solution $x = x_1$, so we may assume $x_1 \neq x_2$.
For any constant $c \in K$, if we have a solution $x'$ for the modified approximation problem determined by $x_1' = x_1-c$ and $x_2'=x_2-c$, then $x=x'+c$ is a solution for the original approximation problem.
This \emph{shifting} of the problem can for instance be used to assume $x_1=0$ without loss of generality.
Similarly, for any constant $d \in K^\times$, if there is a solution $x'$ to the modified approximation problem given by $x_1'=dx_1$, $x_2'=dx_2$, $z_1'=dz_1$, $z_2'=dz_2$, then $x=d^{-1}x'$ is a solution to the original approximation problem.
This \emph{scaling}, together with previous shifting, can be used to reduce to the case $x_1=0$, $x_2=1$; alternatively (but usually not additionally), we can suppose that $z_1=1$, as in the proof of Proposition \ref{prop:RibenboimLocalities}.
The compatibility conditions (I) we impose in our theorems, see for instance \ref{thm:intro_constructible}(I), are all unaffected by scaling and shifting.
\end{remark}

\section{A uniformity condition on a set of localities}
\label{sec:constructible}

We now describe the general $\condU$ condition that we use in the main theorem and deduce some first consequences.
Fix a field $K$ and let $S\subseteq\Sall(K)$.
Consider the following assumption on $S$ and an element $t \in K^\times$.

\begin{assumption}\label{assumption:f_and_t}
  There exists a  polynomial $f \in K[X]$ of degree $d\geq 2$ 
  with leading coefficient $a_d$ %and an element $t \in K^\times$
   such that the following conditions are satisfied for all $v \in S$:
  \begin{enumerate}[(i)]
  \item\label{condIntegral}
    For any $x \in \mathcal{O}_v$ we have $f(x) \in \mathcal{O}_v$.
  \item\label{condNoApproxZero}
    For any $x \in \mathcal{O}_v$ we have $f(x) \not\in t \mathfrak{m}_v$.
  \item\label{condLeadingTerm}
    For any $x \not\in\mathcal{O}_v$ we have $f(x) \in x^d(\mathcal{O}_v \setminus t\mathfrak{m}_v) \cap (a_dx^d + x^{d-1}\mathcal{O}_v)$.
    \item\label{condPlus} We have $\mathcal{O}_v + \mathcal{O}_v \subseteq t^{-1} \mathcal{O}_v$. 
  \end{enumerate}
\end{assumption}

\begin{remark}\label{rem:ad_in_Ov}
Note that 
conditions
(\ref{condIntegral}, \ref{condNoApproxZero})
as well as (\ref{condLeadingTerm})
imply
that $t \in \mathcal{O}_v$
and therefore condition 
(\ref{condPlus})
is trivial if $v$ is a valuation.
Moreover, conditions (\ref{condNoApproxZero}, \ref{condLeadingTerm})
give that $f$ has no zero in $K$ if $S$ is nonempty.
If $v$ is a complex absolute value, then $K$ is dense in its completion $\hat K \cong \mathbb{C}$; since $f$ has a zero in $\hat K$, conditions (\ref{condNoApproxZero},\ref{condLeadingTerm}) will never both be satisfied in that case.
We will therefore mostly ignore complex absolute values until Section \ref{sec:exceptional_balls}.

Condition (\ref{condLeadingTerm}) implies that $a_d \in \mathcal{O}_v$: If $v$ is a valuation and $a_d \not\in\mathcal{O}_v$, then for any $x \not\in \mathcal{O}_v$ we have $x^d \mathcal{O}_v \cap (a_dx^d + x^{d-1} \mathcal{O}_v) = \emptyset$, since a member of the intersection would have to have valuation both equal to $v(a_dx^d)$ and at least $v(x^d)$.
If $v$ is an ordering (and very similar if $v$ is a complex absolute value) and $a_d \not\in \mathcal{O}_v$, then for $x >_v \max(1, (\lvert a_d \rvert - 1)^{-1})$ we have $x^d \mathcal{O}_v + x^{d-1} \mathcal{O}_v \subseteq x^d (1+x^{-1}) \mathcal{O}_v$, and all elements of this set are of smaller absolute value than $a_d x^d$, so again $x^d \mathcal{O}_v \cap (a_dx^d + x^{d-1} \mathcal{O}_v) = \emptyset$.
\end{remark}

\begin{remark}
Note that Assumption \ref{assumption:f_and_t}
remains true when $S$ is enlarged by adding the coarsenings of all localities in $S$; we may thus assume that $S$ is closed under coarsenings.
The assumption also remains true when passing to a subset of $S$.
\end{remark}

\begin{remark}\label{rem:t_1_valuation}
In the situation of Assumption
 \ref{assumption:f_and_t}, if $t \in \mathcal{O}_v^\times$ for some $v \in S$, then conditions (\ref{condIntegral},\ref{condNoApproxZero}) imply that $v$ is necessarily a valuation.
 In particular, if $t=1$, then $S$ consists only of valuations.
\end{remark}

\begin{example}\label{ex:V}
Assumption \ref{assumption:f_and_t} is satisfied for the following sets $S\subseteq\Sall(K)$ and elements $t \in K^\times$:
\begin{enumerate}
  \item If $S\subseteq\Sval(K)$ and there exists a monic $f \in K[X]$ such that for all valuations $v \in S$ the coefficients of $f$ are in the valuation ring of $v$ and the reduction of $f$  does not have a zero in the residue field of $v$, then the assumption is satisfied with $t = 1$.
\item If $S\subseteq{\rm S}_{\pi}^{1}(K)$ is a set of valuations with a common uniformizer $\pi \in K^\times$
(cf.~Example \ref{rem:topology.coincidence}), then the assumption is satisfied with $t = \pi$, as we may consider $f = X^2 - \pi$. %and $N = 2$.
  %\item 
  Slightly more generally, assume that $S\subseteq{\rm S}_{\pi}^{e}(K)$, for some $\pi\in K^\times$ and $e>0$ (also cf.\ Example \ref{rem:topology.coincidence}).
  Then the assumption is satisfied with $t = \pi$, by choosing $f = X^{e+1} - \pi$. % and $N = n$.
  \item If $S\subseteq\Sord(K)$ then the assumption is satisfied with $t = \frac{1}{2}$, by choosing $f=\frac{1}{2}(X^2+1)$.
\end{enumerate}
\end{example}

\begin{example}
\label{ex:p-vals_plus_orderings}
Assume that $K$ is of characteristic zero.
For a prime number $p$ and $e\in\mathbb{N}$,
we note that ${\rm S}_p^e(K)$ contains all {\em $p$-valuations} in the sense of \cite{PR}
with $p$-ramification index at most $e$ (and arbitrary residue degree).
Then for every finite set of prime numbers $\mathcal{P}$ with product $q$ and $e\in\mathbb{N}$,
the space 
$$
 S=\Sord(K)\cup\bigcup_{p\in\mathcal{P}}{\rm S}_p^e(K)
 $$ 
 satisfies Assumption \ref{assumption:f_and_t} with $t=\frac{q}{2q+1}$, as is seen by taking $f = \frac{q+1}{2q+1}(X^{2e} + \frac{q}{q+1})$.
% With $q=\prod_{p\in\mathcal{P}}p$ we can take
%  $f=\frac{q+1}{2q+1}(X^{2e}+\frac{q}{q+1})$ and $t=\frac{q}{2q+1}$.
 In particular, in the language of \cite{Feh13},
 if $S_0$ is a finite set of primes of a number field $K_0$
 and $K$ is an extension of $K_0$, then for any type $\tau \in \mathbb{N}^2$ the set $\mathcal{S}_{S_0}^\tau(K)$ and this choice of $t$ satisfy Assumption \ref{assumption:f_and_t}.
\end{example}

\begin{example}
\label{ex:V_archimedean}
%\label{ex_item:V_archimedean}
    As a different criterion, let $S \subseteq \Sord(K)$,  $f = \sum_{i=0}^d a_i X^i \in \mathbb{Q}[X]$ and $t \in \mathbb{Q}^\times$.
    If
    \[ \sum_{i=0}^d \lvert a_i \rvert \leq 1, \quad \lvert t \rvert + \sum_{i=0}^{d-1} \lvert a_i\rvert \leq \lvert a_d \rvert, \quad \text{and}\quad  \lvert t \rvert \leq \frac{1}{2}, \]
    then conditions \ref{assumption:f_and_t}(\ref{condIntegral},\ref{condLeadingTerm},\ref{condPlus}) are satisfied for all $v \in S$.
    If additionally $\lvert f(x) \rvert \geq \lvert t \rvert$ for every $x \in \mathbb{R}$, then the same holds for every real-closed field and therefore for every ordered field, so condition \ref{assumption:f_and_t}(\ref{condNoApproxZero}) is also satisfied for all $v \in S$.
\end{example}

\begin{example}\label{ex:orderings_and_residue_fields}
  Let $g \in \mathbb{Z}[X]$ be a monic polynomial with no real zero, so in particular $g$ as a function $\mathbb{R} \to \mathbb{R}$ is bounded away from zero. Write $g = \sum_{i=0}^d a_i X^i$.
  For a small rational number $b > 0$, consider the polynomial 
  $$
   f = b^{d+1} g(b^{-1}X) = b\cdot \sum_{i=0}^d a_ib^{d-i} X^i \in \mathbb{Q}[X]
   $$ and $t = b^{d+2}$.
  For sufficiently small $b$, this clearly satisfies all the conditions from Example \ref{ex:V_archimedean}.
  We may in fact choose $b = 1/l$ for some large prime number $l$.
  
  Now let $K$ be any field of characeristic not $l$, and consider $f$ and $t$ as above.
  In this situation Assumption \ref{assumption:f_and_t} is satisfied with $S=\Sord(K) \cup S'$ and $t$ as above, where $S'\subseteq\Sval(K)$ is the set of valuations on $K$ with residue characteristic not $l$ in whose residue field the reduction of $g$ does not have a zero:
  For any valuation $v \in S'$, the polynomial $f$ has all coefficients in $\mathcal{O}_v$, its reduction has no zero over the residue field since the reduction of $g$ does not, and $t \in \mathcal{O}_v^\times$.
  For orderings $v \in \Sord(K)$, we have forced the assumption to be satisfied by construction.

  Note that by the Chebotarev Density Theorem (see \cite[VII, (13.4)]{Neu}), the set $S'$ will include all valuations on $K$ with residue field $\mathbb{F}_p$ for some positive density of prime numbers $p$.
  By choosing $g$ suitably, e.g.\ as the minimal polynomial of an integral primitive element of a totally imaginary Galois extension $L/\mathbb{Q}$, one can in fact achieve densities arbitrarily close to $1$, because the reduction of the polynomial $g$ has no root in $\mathbb{F}_p$ for all prime numbers $p$ such that $p$ is neither completely split nor ramified in $L/\mathbb{Q}$, cf.~\cite[I, (8.3)]{Neu}.
\end{example}

\begin{proposition}\label{prop:phiminmax}
  Let $t$ and $f$ be given.
%  such that conditions \ref{assumption:f_and_t}(\ref{condIntegral},\ref{condNoApproxZero}, \ref{condLeadingTerm}) are satisfied.
There exists a function
$\phi:K^\times\times K^\times\longrightarrow K^\times$
%and
%$\tau:K\longrightarrow K$
such that for all $v\in{\rm S}(K)$ for which conditions \ref{assumption:f_and_t}(\ref{condIntegral},\ref{condNoApproxZero}, \ref{condLeadingTerm}) are satisfied,
and for all $x,y\in K^\times$,
the following hold:
\begin{enumerate}
\item
If $x\in t\mathcal{O}_v$ and $y\in t\mathcal{O}_v$,
then $\phi(x,y)\in x\mathcal{O}_v\cup y\mathcal{O}_v$.

\item
If $x^{-1}\in\mathcal{O}_v$
or $y^{-1}\in\mathcal{O}_v$,
then $\phi(x,y)^{-1}\in x^{-1}\mathcal{O}_v\cap y^{-1}\mathcal{O}_v$.

\end{enumerate}
\end{proposition}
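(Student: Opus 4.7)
I plan to construct $\phi$ explicitly from $f$ and $t$, and then verify \textup{(1)} and \textup{(2)} by case analysis on whether $v(y)\ge v(x)$ or $v(y)<v(x)$.

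The key preparation is a uniform two-sided estimate on the homogenization
\[
F(x,y)\;:=\;x^d f(y/x)\;=\;\sum_{i=0}^{d} a_i\, x^{d-i}y^i,
\]
a polynomial of degree $d$ in $(x,y)$. Writing $m:=\min(v(x),v(y))$, I claim that for every locality $v$ satisfying \ref{assumption:f_and_t}(\ref{condIntegral},\ref{condNoApproxZero},\ref{condLeadingTerm}),
\[
v\bigl(F(x,y)\bigr)\;\in\;[\,d\,m,\ d\,m+v(t)\,].
\]
The argument splits into two cases. If $v(y)\ge v(x)$, then $y/x\in\mathcal{O}_v$, so \ref{assumption:f_and_t}(\ref{condIntegral},\ref{condNoApproxZero}) gives $f(y/x)\in\mathcal{O}_v\setminus t\mathfrak{m}_v$ and hence $v(F(x,y))=dv(x)+v(f(y/x))$ with $v(f(y/x))\in[0,v(t)]$. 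If $v(y)<v(x)$, then \ref{assumption:f_and_t}(\ref{condLeadingTerm}) writes $f(y/x)=(y/x)^d u$ for some $u\in\mathcal{O}_v\setminus t\mathfrak{m}_v$, giving $v(F(x,y))=dv(y)+v(u)\in[dv(y),dv(y)+v(t)]$.

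Using this, I would define $\phi(x,y)$ as an explicit expression built from $x$, $y$, $F(x,y)$, the coefficients $a_i$, and $t$, engineered so that $v(\phi(x,y))$ is approximately $m$ with any slack confined to the permissible direction in each of the two regimes. A natural starting form is $\phi(x,y)=P(x,y)/F(x,y)$ with $P$ a symmetric polynomial of degree $d+1$ incorporating $t$ and the $a_i$; verification of \textup{(1)} and \textup{(2)} then reduces to tracking $v(P(x,y))$ against the window for $v(F(x,y))$ in each case. Since $\phi$ need only be a function on $K^\times\times K^\times$, not a rational function, I may supply a fallback rule on the locus where $P(x,y)=0$ or the expression becomes undefined, ensuring $\phi(x,y)\in K^\times$ throughout.

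The main obstacle is reconciling the $v(t)$-slack in the $F$-estimate with the two opposite one-sided bounds \textup{(1)} and \textup{(2)} require: a naive monomial quotient like $x^{d+1}/F(x,y)$ only yields $v(\phi)\in[v(x)-v(t),v(x)]$, sharp in one of the two sub-cases only, so the wrong-direction slack of $v(t)$ kills the other estimate. What makes the construction feasible is that the hypotheses of \textup{(1)} and \textup{(2)} are essentially disjoint---their common regime forces $v(t)=0$ and $m=0$---so there is always room to set up an asymmetric expression in which the slack is absorbed in the direction permitted by the condition that actually applies, uniformly over all localities $v$ satisfying \ref{assumption:f_and_t}(\ref{condIntegral},\ref{condNoApproxZero},\ref{condLeadingTerm}).
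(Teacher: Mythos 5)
Your first step is exactly the paper's: homogenize $f$ and use conditions \ref{assumption:f_and_t}(\ref{condIntegral},\ref{condNoApproxZero},\ref{condLeadingTerm}) to get the two-sided window $v(F(x,y))\in[dm,\,dm+v(t)]$ with $m=\min(v(x),v(y))$. But the proof stops where the actual content begins: $\phi$ is never defined. The "slack" problem you flag as the main obstacle is resolved in the paper not by a clever numerator $P$, but simply by taking $\phi(x,y)=t^{-1}y^d f(xy^{-1})$, i.e.\ $t^{-1}F(y,x)$ in your notation, so that $v(\phi)\in[dm-v(t),\,dm]$; the point is then that the degree-$d$ homogeneity amplifies $m$ in the favourable direction in each regime. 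Under hypothesis (1) one has $m\geq v(t)\geq 0$, whence $dm-v(t)\geq dm-m=(d-1)m\geq m$ since $d\geq 2$, giving $\phi(x,y)\in x\mathcal{O}_v\cup y\mathcal{O}_v$; under hypothesis (2) one has $m\leq 0$, whence $dm\leq m$, giving the reverse containment. Your closing observation that the two hypotheses overlap only when $m=v(t)=0$ is correct but is not by itself a construction: $\phi$ must be a single element of $K^\times$ working simultaneously for all $v$, some of which may fall under (1) and others under (2) for the same pair $(x,y)$, so one cannot "absorb the slack in the direction permitted by the condition that actually applies" on a per-condition basis --- one needs a formula whose window $[dm-v(t),dm]$ happens to land correctly in both regimes at once.

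Two further concrete problems with the template you propose. First, $P(x,y)/F(x,y)$ with $P$ symmetric homogeneous of degree $d+1$ is the wrong shape: a genuinely symmetric polynomial like $x^{d+1}+y^{d+1}$ has uncontrolled valuation due to cancellation, and a symmetric monomial $x^ky^k$ requires $d+1$ even; the working choice is a degree-$d$ homogeneous expression divided by the constant $t$, not a degree-$(d+1)$ form divided by $F$. Second, the proposition quantifies over all localities in ${\rm S}(K)$, including orderings, for which the additive notation $v(x)$, and in particular the interval bookkeeping, must be reinterpreted in terms of the totally ordered family of sets $x\mathcal{O}_v$ (equivalently $|\cdot|_v$); this is routine but is why the paper's proof is phrased entirely with containments $f(xy^{-1})\in\mathcal{O}$, $f(xy^{-1})\notin t\mathfrak{m}$, etc., rather than with valuations.
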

\begin{proof}
Let $\phi$ be the homogenisation of $ft^{-1}$,
so if $f$ is of degree $d\geq 2$ then $\phi$ is given by
\begin{align*}
    \phi(X,Y)=f(XY^{-1})Y^{d}t^{-1} \in K[X,Y].
\end{align*}
Let $v\in{\rm S}(K)$ satisfy
conditions \ref{assumption:f_and_t}(\ref{condIntegral},\ref{condNoApproxZero}, \ref{condLeadingTerm}),
and write $\mathcal{O}=\mathcal{O}_v$, $\mathfrak{m}=\mathfrak{m}_v$.

To prove (1),
let $x,y\in t\mathcal{O}$.
Then $x^{d}t^{-1}\in x\mathcal{O}$ and $y^{d}t^{-1}\in y\mathcal{O}$.
If $xy^{-1}\in\mathcal{O}$,
then
$f(xy^{-1})\in\mathcal{O}$,
by \ref{assumption:f_and_t}(\ref{condIntegral}),
hence
\begin{align*}
    \phi(x,y)=f(xy^{-1})y^{d}t^{-1}\in y\mathcal{O}.
\end{align*}
If instead
$xy^{-1}\notin\mathcal{O}$,
then
$f(xy^{-1})\in(xy^{-1})^{d}\mathcal{O}$ by \ref{assumption:f_and_t}(\ref{condLeadingTerm}),
hence
\begin{align*}
    \phi(x,y)&=f(xy^{-1})y^{d}t^{-1}
    \in (xy^{-1})^{d}y^{d}t^{-1}\mathcal{O}
    = x^{d}t^{-1}\mathcal{O}
    \subseteq x\mathcal{O},
\end{align*}
as required.

To prove (2),
our assumption is that either $x^{-1}\in\mathcal{O}$ or $y^{-1}\in\mathcal{O}$.
If $xy^{-1}\in\mathcal{O}$,
then
$y^{-1}\mathcal{O}\subseteq x^{-1}\mathcal{O}$,
so the assumption implies that $y^{-1}\in\mathcal{O}$.
From \ref{assumption:f_and_t}(\ref{condNoApproxZero}) we have $f(xy^{-1})\notin t\mathfrak{m}$,
so $f(xy^{-1})^{-1}\in t^{-1}\mathcal{O}$.
Thus
\begin{align*}
    \phi(x,y)^{-1}&=f(xy^{-1})^{-1}y^{-d}t
    \in y^{-d}\mathcal{O}
    \subseteq y^{-1}\mathcal{O}=x^{-1}\mathcal{O}\cap y^{-1}\mathcal{O}.
\end{align*}
On the other hand,
if $xy^{-1}\notin\mathcal{O}$,
then
$x^{-1}y\in\mathcal{O}$ and  $x^{-1}\mathcal{O}\subseteq y^{-1}\mathcal{O}$,
and the assumption implies that $x^{-1}\in\mathcal{O}$.
By \ref{assumption:f_and_t}(\ref{condLeadingTerm}), $f(xy^{-1})\notin(xy^{-1})^dt\mathfrak{m}$, so
$f(xy^{-1})^{-1}\in (x^{-1}y)^dt^{-1}\mathcal{O}$.
Thus
\begin{align*}
    \phi(x,y)^{-1}&=f(xy^{-1})^{-1}y^{-d}t
    \in (x^{-1}y)^{d}t^{-1}y^{-d}t\mathcal{O}
    = x^{-d}\mathcal{O}
    \subseteq x^{-1}\mathcal{O}=x^{-1}\mathcal{O}\cap y^{-1}\mathcal{O}.\qedhere
\end{align*}    
\end{proof}

\begin{corollary}\label{cor:phimin}
  Let $t$ and $f$ be given, and let $\phi$ be as in Proposition \ref{prop:phiminmax}.
  Let $v\in{\rm S}(K)$ satisfy conditions \ref{assumption:f_and_t}(\ref{condIntegral},\ref{condNoApproxZero}, \ref{condLeadingTerm}).
  Then for $x_1, \dotsc, x_n \in K^\times$, the non-zero element 
  $$
   \phi(x_1, \dotsc, x_n) := \left\{\begin{array}{ll}x_{1},&n=1\\ \phi(x_1, \phi(x_2,\dots)\dotsb), &n>1\end{array}\right.
  $$ 
  satisfies:
  \begin{enumerate}
    \item If $x_i \in t\mathcal{O}_v$ for all $i$, then $\phi(x_1, \dotsc, x_n) \in \bigcup_{i=1}^n x_i \mathcal{O}_v$.
    \item If $x_i^{-1} \in \mathcal{O}_v$ for some $i$, then $\phi(x_1, \dotsc, x_n)^{-1} \in \bigcap_{i=1}^n x_i^{-1} \mathcal{O}_v$.
  \end{enumerate}
\end{corollary}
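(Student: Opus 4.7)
The plan is to prove both statements simultaneously by induction on $n$, the base case $n=1$ being immediate from the definition $\phi(x_1)=x_1$. For the inductive step, set $y:=\phi(x_2,\ldots,x_n)$ so that $\phi(x_1,\ldots,x_n)=\phi(x_1,y)$. The whole argument then reduces to a single application of Proposition~\ref{prop:phiminmax} to the pair $(x_1,y)$, after using the inductive hypothesis on $(x_2,\ldots,x_n)$ to verify its hypotheses.

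For part (1), assume every $x_i\in t\mathcal{O}_v$. The inductive hypothesis yields an index $j_0\in\{2,\ldots,n\}$ with $y\in x_{j_0}\mathcal{O}_v$, and since $x_{j_0}\in t\mathcal{O}_v$ this gives $y\in x_{j_0}\mathcal{O}_v\subseteq t\mathcal{O}_v$. Proposition~\ref{prop:phiminmax}(1) then yields $\phi(x_1,y)\in x_1\mathcal{O}_v\cup y\mathcal{O}_v\subseteq x_1\mathcal{O}_v\cup x_{j_0}\mathcal{O}_v$, which is contained in $\bigcup_{i=1}^n x_i\mathcal{O}_v$ as required.

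For part (2), assume $x_i^{-1}\in\mathcal{O}_v$ for some $i$, and split into cases according to whether such an $i$ exists in $\{2,\ldots,n\}$ or only equals $1$. In the first case, the inductive hypothesis on $(x_2,\ldots,x_n)$ gives $y^{-1}\in\bigcap_{j=2}^n x_j^{-1}\mathcal{O}_v$, which in particular contains $\mathcal{O}_v$-membership for $y^{-1}$; Proposition~\ref{prop:phiminmax}(2) applied to $(x_1,y)$ then produces $\phi(x_1,y)^{-1}\in x_1^{-1}\mathcal{O}_v\cap y^{-1}\mathcal{O}_v$, and since $y^{-1}\in x_j^{-1}\mathcal{O}_v$ implies $y^{-1}\mathcal{O}_v\subseteq x_j^{-1}\mathcal{O}_v$ for each $j\geq 2$, one concludes $\phi(x_1,y)^{-1}\in\bigcap_{i=1}^n x_i^{-1}\mathcal{O}_v$. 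In the second case, every $x_j$ with $j\geq 2$ satisfies $x_j^{-1}\notin\mathcal{O}_v$, and using $\mathcal{O}_v\cup\mathcal{O}_v^{-1}=K$ this forces $x_j\in\mathfrak{m}_v$ and hence $\mathcal{O}_v\subseteq x_j^{-1}\mathcal{O}_v$; Proposition~\ref{prop:phiminmax}(2) then yields $\phi(x_1,y)^{-1}\in x_1^{-1}\mathcal{O}_v\subseteq\mathcal{O}_v$, which sits inside every $x_j^{-1}\mathcal{O}_v$ for $j\geq 2$ and inside $x_1^{-1}\mathcal{O}_v$ by the proposition directly.

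The only delicate point is the case split in (2) where the hypothesis is witnessed only by the first coordinate $x_1$: there is no inductive information available about $y$, and one must instead exploit the fact that all later $x_j$ are then strict non-units to enlarge their associated balls $x_j^{-1}\mathcal{O}_v$ beyond $\mathcal{O}_v$.
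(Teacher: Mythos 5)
Your proof is correct and takes essentially the same approach as the paper: induction on $n$, reducing at each step to the two parts of Proposition \ref{prop:phiminmax} applied to the pair $(x_1,\phi(x_2,\dots,x_n))$. The only cosmetic difference is in part (2), where the paper fixes an index $i$ with $x_i^{-1}\in\mathcal{O}_v$ and propagates the containment downward, while you run a single induction with a case split on whether the witness lies in $\{2,\dots,n\}$ or only at position $1$; both handle the degenerate case via the observation that $x_j^{-1}\notin\mathcal{O}_v$ forces $x_j^{-1}\mathcal{O}_v\supseteq\mathcal{O}_v$.
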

\begin{proof}
  For (1), induction using Proposition \ref{prop:phiminmax}(1) firstly shows that for any $i \geq 1$, the element $\phi(x_i, \dotsc, x_n)$ is in $t\mathcal{O}_v$.
  Secondly, using Proposition \ref{prop:phiminmax}(1) again, we obtain \[\phi(x_1, \dotsc, x_n) = \phi(x_1, \phi(x_2, \dotsc, x_n)) \in x_1 \mathcal{O}_v \cup \phi(x_2, \dotsc, x_n) \mathcal{O}_v,\] so using induction once more gives $\phi(x_1, \dotsc, x_n) \in \bigcup_i x_i \mathcal{O}_v$ as desired.

  For (2) note that it is sufficient to prove that $\phi(x_1, \dotsc, x_n)^{-1} \in x_i^{-1} \mathcal{O}_v$ for those $i$ with $x_i^{-1} \in \mathcal{O}_v$, since for $x_i^{-1} \in \mathcal{O}_v$ and $x_{i'}^{-1} \notin \mathcal{O}_v$ we have $x_i^{-1} \mathcal{O}_v \subseteq x_{i'}^{-1}\mathcal{O}_v$.
  For any such $i$, we have $\phi(x_i, \dotsc, x_n)^{-1} \in x_i^{-1} \mathcal{O}_v$ by Proposition \ref{prop:phiminmax}(2), and then induction shows $\phi(x_j, \dotsc, x_n)^{-1} \in x_i^{-1} \mathcal{O}_v$ for all $j \leq i$, so we obtain the claim $\phi(x_1, \dotsc, x_n)^{-1} \in x_i^{-1}  \mathcal{O}_v$.
\end{proof}

\begin{remark}
Only for use in Section \ref{sec:exceptional_balls}
we note that the proof of part (1) of Proposition \ref{prop:phiminmax},
and consequently also of part (1) of Corollary \ref{cor:phimin},
requires only conditions \ref{assumption:f_and_t}(\ref{condIntegral})
and \ref{assumption:f_and_t}(\ref{condLeadingTerm}) to hold.
\end{remark}

\begin{remark}\label{rem:alternative.phi}
  A function $\phi$ satisfying the conditions from Proposition \ref{prop:phiminmax} can also be obtained using assumptions different from our Assumption \ref{assumption:f_and_t}.
  For instance, if one works with a set $S$ consisting exclusively of valuations such that the holomorphy ring $R = \bigcap_{v \in S} \mathcal{O}_v$ is a Bézout ring with quotient field $K$, then for any $x, y \in K^\times$ we can find $z \in K^\times$ with $v(z) = \min(v(x), v(y))$ for all $v \in S$: this is exactly the statement that the fractional ideal $xR + yR$ is principal with generator $z$.
  This $z$ will satisfy the properties desired from $\phi(x,y)$ in Proposition \ref{prop:phiminmax}.

  As a slightly more general condition, it is sufficient for $R$ to be a Prüfer ring such that there exists a natural number $n$ for which the $n$-th power of any two-generated fractional ideal of $R$ is principal: In this situation, for any $x,y \in K^\times$, we can find a generator $z$ of the fractional ideal $(xR + yR)^n$, and this generator $z$ will satisfy $v(z) = n \min(v(x), v(y))$ for all $v \in S$, which is as desired for $\phi(x,y)$.
  This latter condition on $R$ is established in \cite[Theorem 1]{RoquettePrincIdealThms} in the situation of Example \ref{ex:V}(1), using effectively our construction of the function $\phi$.
  
  Ring-theoretic conditions on the holomorphy ring are in the style of the \condU{} conditions of 
results of Ershov, see Section \ref{sec:Ershov}.
  Our Assumption \ref{assumption:f_and_t} is more ad hoc, but has the advantage that it can also apply to orderings.
\end{remark}

\section{The approximation theorem}
\label{sec:mainthm}

We want to prove two approximation theorems:
one with a conclusion of the type $x - x_i \in z_i \mathfrak{m}$ and one with the weaker conclusion $x - x_i \in z_i \mathcal{O}$.
Neither of these theorems will easily imply the other, but the proofs are very similar, so we state and prove our lemmas simultaneously in the two situations, called ``Situation $\mathfrak{m}$'' and ``Situation $\mathcal{O}$''.

Let $S\subseteq\Sall(K)$.
For $v \in \Sall(K)$, we write $\BB_v = \mathfrak{m}_v$ in Situation~$\mathfrak{m}$, and $\BB_v = \mathcal{O}_v$ in Situation~$\mathcal{O}$.
Moreover, in Situation $\mathfrak{m}$, we endow $\Sall(K)$ with the topology $\mathcal{T}_{\rm Zar^*}$,
in Situation $\mathcal{O}$ with the Zariski topology $\mathcal{T}_{\rm Zar}$. 
In this way, in either case sets of the form $\{ v\in\Sall(K) \colon x \in \BB_v \}$, for $x \in K$, are open.

Let us also fix $t \in K^\times$.
We call two localities $v,w$ \emph{$t$-independent} if $t \in \mathcal{O}_{v \vee w}^\times$ and additionally $v,w$ are strongly incomparable if they are both orderings. 
Note that any two independent localities are always $t$-independent, and any two valuations (not necessarily distinct) are $1$-independent.
We call two sets $S_1,S_2$ of localities
{\em $t$-independent}
if any element of $S_1$ is $t$-independent to any element of $S_2$.
\begin{example}\label{ex:tincomp}
Let $t=\pi\in K$ and $e\in\mathbb{N}$. Valuations $v,w\in{\rm S}_\pi^e(K)$
(cf.~Example \ref{rem:topology.coincidence}),
are $t$-independent if and only if
they are distinct.
\end{example}

For the rest of this section
we assume that $S$ and $t$ satisfy
Assumption \ref{assumption:f_and_t}.
Indeed, 
we fix a polynomial $f\in K[X]$
% of degree $d\geq2$ %and an element $t\in K^{\times}$,
as described there.

We now start the proof of the approximation theorem
by constructing elements that are `uniformly small'
resp.~`uniformly big'.

\begin{lemma}\label{lem:pairwise_indicator_functions}
  Let $S_1, S_2 \subseteq S$ be nonempty and compact in the given topology. Assume they are $t$-independent, and in Situation $\mathfrak{m}$ furthermore incomparable.
  Let $z, z' \in K^\times$ such that $w(z) \leq 0$ and $w(z') \leq 0$ whenever $w$ is a valuation on $K$ which has a refinement in $S_1$ and a refinement in $S_2$.
  Then there exists $b \in K^\times$ such that $b \in t \BB_v \cap tz \BB_v$ for all $v \in S_1$, and $b^{-1} \in t \BB_v \cap tz' \BB_v$ for all $v \in S_2$.
\end{lemma}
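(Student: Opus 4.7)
My plan is a two-stage argument: a pairwise construction followed by a two-level compactness reduction using the function $\phi$ from Proposition \ref{prop:phiminmax}. Replacing $S$ by its closure under coarsenings (which preserves Assumption \ref{assumption:f_and_t}), I may assume that the finest common coarsening $w := v_1 \vee v_2$ of any $v_1 \in S_1$, $v_2 \in S_2$ lies in $S$.

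The first key move is to collapse the two ball constraints at each locality into one, via the template elements
\[ M := \phi(t^{-1}, (tz)^{-1})^{-1}, \qquad M' := \phi(t^{-1}, (tz')^{-1})^{-1}. \]
Corollary \ref{cor:phimin}(2), applied at any $v \in S_1$ with $x_1^{-1} = t \in \mathcal{O}_v$, shows $M \in t\mathcal{O}_v \cap tz\mathcal{O}_v$, and symmetrically $M' \in t\mathcal{O}_v \cap tz'\mathcal{O}_v$ for $v \in S_2$. It therefore suffices to produce $b \in K^\times$ with $b \in M\BB_{v_1}$ for every $v_1 \in S_1$ and $b^{-1} \in M'\BB_{v_2}$ for every $v_2 \in S_2$. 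For a single pair, such a $b_{v_1,v_2}$ is delivered by Proposition \ref{prop:RibenboimLocalities} applied with $z_1 = M$, $z_2 = M'^{-1}$, using part (2) in Situation $\mathcal{O}$ and part (1) in Situation $\mathfrak{m}$ — where the required strong incomparability of $v_1, v_2$ follows from our incomparability assumption combined with $t$-independence (which explicitly enforces strong incomparability for ordering-ordering pairs). The compatibility $M\mathcal{O}_w = M'^{-1}\mathcal{O}_w$ is trivial when $w$ is the trivial valuation; when $w$ is nontrivial (hence a valuation in $S$), $w(t) = 0$ by $t$-independence and $w(z), w(z') \leq 0$ by hypothesis together ensure that both parts of Corollary \ref{cor:phimin} apply at $w$, and the two-sided estimates collapse to $w(M) = w(M') = 0$.

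For the global construction I run two nested compactness arguments. For fixed $v_1$, the sets $U_{v_2} = \{v \in S_2 : b_{v_1,v_2}^{-1} \in t\BB_v \cap tz'\BB_v\}$ are open in the given topology (sets of the form $\{v : x \in \BB_v\}$ are subbasic for $\mathcal{T}_{\rm Zar}$ and $\mathcal{T}_{\rm Zar^*}$) and cover $S_2$; passing to a finite subcover I set
\[ b_{v_1} := \phi(b_{v_1,v_2^{(1)}}, \dots, b_{v_1,v_2^{(m)}}). \]
Corollary \ref{cor:phimin}(1) at $v_1$ yields $b_{v_1} \in t\BB_{v_1} \cap tz\BB_{v_1}$, and Corollary \ref{cor:phimin}(2) at any $v_2' \in S_2$ — applicable since the covering index supplies an argument with inverse in $\mathcal{O}_{v_2'}$ — yields $b_{v_1}^{-1} \in t\BB_{v_2'} \cap tz'\BB_{v_2'}$. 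An analogous second pass over $S_1$, this time assembling the $b_{v_1^{(j)}}$ dually as
\[ b := \phi(b_{v_1^{(1)}}^{-1}, \dots, b_{v_1^{(k)}}^{-1})^{-1}, \]
delivers the sought-after global $b$, with Corollary \ref{cor:phimin}(2) used to verify the $S_1$-side bound and Corollary \ref{cor:phimin}(1) to verify the $S_2$-side bound.

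The main obstacle is the $\phi$-computation at the nontrivial common coarsenings $w$: the delicate point is that \emph{both} parts of Corollary \ref{cor:phimin} must simultaneously apply at $w$, so that their one-sided estimates combine to pin $w(M) = w(M') = 0$. This is precisely where the hypothesis $w(z), w(z') \leq 0$ earns its keep; the rest of the argument is a disciplined bookkeeping of $\phi$-combinations over finite open covers.
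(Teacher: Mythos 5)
Your proof is correct and follows essentially the same route as the paper: a pairwise application of Proposition \ref{prop:RibenboimLocalities} followed by the same two nested compactness arguments glued together with $\phi$ via Corollary \ref{cor:phimin}. The only variation is cosmetic: where the paper normalizes the radii by a per-locality case split ($z_v=tz$ if $z\in\mathcal{O}_v$, else $z_v=t$), you build uniform elements $M,M'$ with $\phi$ and verify $w(M)=w(M')=0$ at each common coarsening $w$ using both parts of Corollary \ref{cor:phimin} (after closing $S$ under coarsenings, as Remark \ref{rem:t_1_valuation} and the remark preceding it permit), which checks out.
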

\begin{proof}
For fixed $v\in S_{1}$ and $v'\in S_{2}$, denote $w=v\vee v'$. 
This is a valuation by the assumption of $t$-independence,
and so $w(z),w(z')\leq0$, and $w(t)=0$.

With the aim of applying Proposition \ref{prop:RibenboimLocalities},
we define two elements $z_v,z_{v'}'$ as modifications of $z,z'$, as follows.
If $z\in\mathcal{O}_{v}$ let $z_v=tz$, otherwise let $z_v=t$.
If $z'\in\mathcal{O}_{v'}$ let $z_{v'}'=(tz')^{-1}$, otherwise let $z_{v'}' = t^{-1}$.
Note that $w(z_{v})=w(z_{v'}')=0$.
Therefore $z_v\mathcal{O}_w=\mathcal{O}_w=z_{v'}'\mathcal{O}_w$,
so
the assumption of Proposition \ref{prop:RibenboimLocalities} is satisfied.

In Situation $\mathfrak{m}$ we have that $v$ and $v'$ are strongly incomparable and
can apply Proposition \ref{prop:RibenboimLocalities}(1) to get
$y\in K^\times$ with $y\in z_v\mathfrak{m}_v=z_v \BB_v$ and $y^{-1}\in z_{v'}'^{-1}\mathfrak{m}_{v'} =z_{v'}'^{-1} \BB_{v'}$;
In Situation $\mathcal{O}$ we can apply Proposition \ref{prop:RibenboimLocalities}(2) to get
$y\in K^\times$ with 
$y\in z_v\mathcal{O}_v=z_v\BB_v$ and $y^{-1}\in z_{v'}'^{-1}\mathcal{O}_{v'}=z_{v'}'^{-1}\BB_{v'}$.
Thus, setting $b_{vv'}=y$, we get in both situations that
\begin{enumerate}
\item $b_{vv'} \in z_{v}\BB_{v}=t \BB_v \cap tz \BB_v$ and 
\item $b_{vv'}^{-1} \in z_{v'}'^{-1}\BB_{v'}=t \BB_{v'} \cap tz' \BB_{v'}$.
\end{enumerate}
For every $x\in K^\times$, the set
$$
U_x^{(2)} = \{u \in S: x^{-1} \in t \BB_u \cap tz' \BB_u \}=\{u\in S: (xt)^{-1}\in \BB_u\}\cap\{u\in S: (xtz')^{-1} \in \BB_u\}
$$
is open in the given topology.
For each $v\in S_{1}$,
the family $\{U_{b_{vv'}}^{(2)}:v'\in S_2\}$ is an open covering of $S_2$, by (2).
By compactness of $S_2$,  there are finitely many $v'_1, \dotsc, v'_m\in S_2$ such that 
$S_2\subseteq \bigcup_{i=1}^m U_{b_{vv'_i}}^{(2)}$.
Let 
$b_v = \phi(b_{vv'_1},\dots,b_{vv'_m})$.
For each $i$, by (1), we have $b_{vv_{i}'}\in t\BB_{v}\cap tz\BB_{v}\subseteq t\mathcal{O}_{v}$,
and so Corollary \ref{cor:phimin}(1) gives that
\begin{enumerate}
    \item[$(1')$]
    $b_{v}\in \bigcup_{i=1}^{m}b_{vv_{i}'}\mathcal{O}_{v}\subseteq t\BB_{v}\cap tz\BB_{v}$.
\end{enumerate}
On the other hand, for each $v' \in S_2$,
there is an $i$ with $v' \in U_{b_{vv'_i}}^{(2)}$,
i.e.\ $b_{vv'_i}^{-1} \in t \BB_{v'} \cap tz' \BB_{v'}\subseteq\mathcal{O}_{v'}$,
and so  Corollary \ref{cor:phimin}(2) gives that
\begin{enumerate}
\item[$(2')$]  $b_v^{-1}\in \bigcap_{i=1}^{m}b_{vv_{i}'}^{-1}\mathcal{O}_{v'}\subseteq t\BB_{v'} \cap tz' \BB_{v'}$.
\end{enumerate}

Similarly, for every $x\in K$, the set
$$
 U_x^{(1)}=\{u \in S: x\in t\BB_u\cap tz \BB_u\}
$$
is open,
and the family $\{U_{b_v}^{(1)}:v\in S_1\}$ is an open covering of $S_1$, by $(1')$.
By compactness of $S_1$, there exist finitely many $v_1, \dotsc, v_k\in S_1$ such that 
$S_1\subseteq\bigcup_{i=1}^k U_{b_{v_i}}^{(1)}$.
Let
$b = \phi(b_{v_1}^{-1}, \dots,b_{v_k}^{-1})^{-1}$.

For each $v\in S_1$, there is an $i$ with $v\in U_{b_{v_i}}^{(1)}$, i.e.\
$(b_{v_i}^{-1})^{-1}=b_{v_i} \in t\BB_v \cap tz \BB_v\subseteq\mathcal{O}_v$ and so Corollary \ref{cor:phimin}(2) gives that
\begin{enumerate}
\item[$(1'')$]
$b\in\bigcap_{i=1}^{k}b_{v_{i}}\mathcal{O}_{v}\subseteq t\BB_v\cap tz \BB_v$.
\end{enumerate}
Finally, for each $v'\in S_2$ and each $i$, by $(2')$,
we have that $b_{v_i}^{-1} \in t\BB_{v'} \cap tz'\BB_{v'}\subseteq t\mathcal{O}_{v'}$,
and so Corollary \ref{cor:phimin}(1) gives that
\begin{enumerate}
\item[$(2'')$]
$b^{-1}\in\bigcup_{i=1}^{k}b_{v_{i}}^{-1}\mathcal{O}_{v'}\subseteq t\BB_{v'} \cap tz'\BB_{v'}$,
\end{enumerate}
as required.
\end{proof}

\begin{lemma}\label{lem:approximation_lemma}
  %Let $f, t, d, a_d$ as in Assumption \ref{assumption:f_and_t}.
  Let $d$ be the degree of $f$ and $a_d$ its leading coefficient.
  Let $v \in S$, $z \in K^\times$ and $b \in K^\times$. Then the element $x = a_db^{-d}f(b^{-1})^{-1}$ satisfies 
\begin{enumerate}
\item $x-1 \in z \BB_v$ if $b \in t\BB_v \cap tz\BB_v$, and 
\item $x \in z\BB_v$ if $b^{-1} \in t\BB_v \cap tz\BB_v$.
\end{enumerate}
\end{lemma}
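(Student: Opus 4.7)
The plan is to introduce $y = b^{-1}$ together with the auxiliary expression
\[ g(b) := b^d f(b^{-1}) = a_d + a_{d-1} b + \dots + a_0 b^d = a_d + b \cdot h(b), \]
where $h(b) := a_{d-1} + a_{d-2}b + \dots + a_0 b^{d-1}$, so that $x = a_d y^d/f(y) = a_d/g(b)$ and $x - 1 = -b h(b)/g(b)$. Both parts then reduce to multiplicative bookkeeping using Assumption \ref{assumption:f_and_t} and the basic relations $\mathcal{O}_v \cdot \mathcal{O}_v = \mathcal{O}_v$, $\mathfrak{m}_v \cdot \mathcal{O}_v = \mathfrak{m}_v$, and hence $\BB_v \cdot \mathcal{O}_v \subseteq \BB_v$ in either situation.

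For part (2), the hypothesis gives $y \in t\BB_v \cap tz\BB_v$, whence $y \in t\mathcal{O}_v \subseteq \mathcal{O}_v$ (using $t \in \mathcal{O}_v$, Remark \ref{rem:ad_in_Ov}). Conditions \ref{assumption:f_and_t}(\ref{condIntegral},\ref{condNoApproxZero}) give $f(y) \in \mathcal{O}_v \setminus t\mathfrak{m}_v$, hence $t f(y)^{-1} \in \mathcal{O}_v$, while $y^d = y \cdot y^{d-1} \in tz\BB_v \cdot \mathcal{O}_v \subseteq tz\BB_v$. Combined with $a_d \in \mathcal{O}_v$ (Remark \ref{rem:ad_in_Ov}), the identity
\[ x/z = a_d \cdot \bigl(y^d/(tz)\bigr) \cdot \bigl(tf(y)^{-1}\bigr) \in \mathcal{O}_v \cdot \BB_v \cdot \mathcal{O}_v \subseteq \BB_v \]
yields $x \in z\BB_v$.

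For part (1), the hypothesis $b \in t\BB_v \cap tz\BB_v$ gives $b \in \mathcal{O}_v$ and $b/(tz) \in \BB_v$, and I aim to establish $h(b), tg(b)^{-1} \in \mathcal{O}_v$, after which
\[ (x-1)/z = -\bigl(b/(tz)\bigr) \cdot h(b) \cdot \bigl(tg(b)^{-1}\bigr) \in \BB_v \cdot \mathcal{O}_v \cdot \mathcal{O}_v \subseteq \BB_v. \]
If $b \in \mathfrak{m}_v$ (the only possibility in Situation $\mathfrak{m}$), then $b^{-1} \notin \mathcal{O}_v$ and Assumption \ref{assumption:f_and_t}(\ref{condLeadingTerm}) at $b^{-1}$ yields $g(b) = a_d + b \epsilon \in \mathcal{O}_v \setminus t\mathfrak{m}_v$ for some $\epsilon \in \mathcal{O}_v$, so $h(b) = \epsilon \in \mathcal{O}_v$ and $tg(b)^{-1} \in \mathcal{O}_v$. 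If instead $b \in \mathcal{O}_v^\times$ (only possible in Situation $\mathcal{O}$), then $b \in t\mathcal{O}_v \cap \mathcal{O}_v^\times$ forces $t \in \mathcal{O}_v^\times$, so $v$ is a valuation by Remark \ref{rem:t_1_valuation}; conditions \ref{assumption:f_and_t}(\ref{condIntegral},\ref{condNoApproxZero}) then give $f(y) \in \mathcal{O}_v^\times$, so $g(b) = b^d f(y) \in \mathcal{O}_v^\times$ and $tg(b)^{-1} \in \mathcal{O}_v$, while $h(b) = (g(b) - a_d)/b \in \mathcal{O}_v$ using $b \in \mathcal{O}_v^\times$ and $g(b), a_d \in \mathcal{O}_v$. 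The one nontrivial observation is that the second case necessarily lands in the valuation setting, which is what makes the subtraction $g(b) - a_d$ safe to perform; once that is noted the rest is purely multiplicative.
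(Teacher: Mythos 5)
Your proof is correct and follows essentially the same route as the paper's: part (2) is the same direct computation from condition \ref{assumption:f_and_t}(\ref{condNoApproxZero}), and part (1) uses the same case split on whether $b\in\mathfrak{m}_v$, invoking condition \ref{assumption:f_and_t}(\ref{condLeadingTerm}) at $b^{-1}$ in the first case and the observation that $t\in\mathcal{O}_v^\times$ forces $v$ to be a valuation in the second. The only difference is cosmetic: you package the paper's fraction $\frac{a_d-b^df(b^{-1})}{b^df(b^{-1})}$ via the auxiliary polynomial $h$, and you correctly flag the one point where closure of $\mathcal{O}_v$ under subtraction is needed.
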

\begin{proof}
  Recall that $t\in\mathcal{O}_v$ and $a_d\in\mathcal{O}_v$ 
   (Remark \ref{rem:ad_in_Ov}).

 Proof of (1):
  Assume that $b \in t\BB_v \cap tz\BB_v$.
  If $b \in \mathfrak{m}_v$ 
  (which is the case if $t \in \mathfrak{m}_v$,
   $z\in\mathfrak{m}_v$,
   or $\BB_v=\mathfrak{m}_v$), 
  then $b^d f(b^{-1}) \in (\mathcal{O}_v \setminus t\mathfrak{m}_v) \cap (a_d + b \mathcal{O}_v)$
   by \ref{assumption:f_and_t}(\ref{condLeadingTerm}), so
   in particular
   $a_d-b^df(b^{-1})\in b\mathcal{O}_v$  and
   $(b^df(b^{-1}))^{-1}\in t^{-1}\mathcal{O}_v$.
   Therefore,
   \[x-1 = \frac{a_d - b^df(b^{-1})}{b^d f(b^{-1})} \in b\mathcal{O}_v t^{-1}\mathcal{O}_v = bt^{-1} \mathcal{O}_v \subseteq z \BB_v \] as desired.
   If $b \not\in\mathfrak{m}_v$, 
   then 
   $\BB_v=\mathcal{O}_v$, 
   $t\in\mathcal{O}_v^\times$ 
   and $z^{-1}\in\mathcal{O}_v$.
   Combining $b\in t\BB_v=\mathcal{O}_v$ and $b^{-1}\in\mathcal{O}_v$
    we then get $b\in\mathcal{O}_v^\times$.
   Therefore, $b^d f(b^{-1}) \in \mathcal{O}_v \setminus t\mathfrak{m}_v = \mathcal{O}_v^\times$ by \ref{assumption:f_and_t}(\ref{condIntegral},\ref{condNoApproxZero}), and therefore $x \in \mathcal{O}_v$.
 Since $t\in\mathcal{O}_v^\times$ implies that $v$ is a valuation (Remark \ref{rem:t_1_valuation}), we conclude that
 $$
  x-1 \in \mathcal{O}_v\subseteq z\mathcal{O}_v=z\BB_v.
 $$ 
 
 Proof of (2):
 Now assume that $b^{-1} \in t\BB_v \cap tz\BB_v$.
 We have $f(b^{-1}) \not\in t \mathfrak{m}_v$ by \ref{assumption:f_and_t}(\ref{condNoApproxZero}), so 
\begin{equation*}
  x \in a_db^{-d} t^{-1}\mathcal{O}_v \subseteq b^{-d+1} \BB_v \subseteq z\BB_v.
  \qedhere
\end{equation*}
\end{proof}

\begin{remark}\label{rem:Lemma52_details}
Again only 
for use in Section \ref{sec:exceptional_balls} we point out that the proof of 
Lemma \ref{lem:approximation_lemma}(1) in Situation $\mathfrak{m}$ requires only \ref{assumption:f_and_t}(\ref{condLeadingTerm}) to hold for $v$, $t$ and $f$, and not all the other conditions from Assumption \ref{assumption:f_and_t}.
\end{remark}

\begin{lemma}\label{lem:induction}
  Let $S_1, \dotsc, S_n \subseteq S$ be nonempty and compact in the given topology. Assume they are pairwise $t$-independent, and in Situation $\mathfrak{m}$ furthermore pairwise incomparable.
  Let $x', x'' \in K$, $z_1, \dotsc, z_n \in K^\times$ such that for any valuation $w$ on $K$ which has a refinement in $S_i$ and a refinement in $S_j$, $i \neq j$, we have $w(z_i) = w(z_j)$, and furthermore for any valuation $w$ with a refinement in $S_1$ and a refinement in $S_i$, $i \neq 1$, we have $w(x'-x'') \geq w(z_1)$.
 Then there exists $x\in K$ with $x-x'\in z_1 \BB_v$ for each $v\in S_1$
 and $x-x''\in z_i \BB_v$ for each $v\in S_i$, $i\neq 1$.
\end{lemma}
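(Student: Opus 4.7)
The plan is to proceed by induction on $n$. The base case $n=1$ is trivial: take $x=x'$, for which $x-x'=0\in z_1\BB_v$ holds vacuously.

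For the inductive step ($n\geq 2$), I would first apply the inductive hypothesis to the first $n-1$ sets $(S_1,\dotsc,S_{n-1})$ with the same data $(x', x'', z_1,\dotsc,z_{n-1})$. The compatibility hypotheses restrict correctly to this smaller family, yielding an element $y\in K$ with $y-x'\in z_1\BB_v$ for $v\in S_1$ and $y-x''\in z_i\BB_v$ for $v\in S_i$, $2\leq i\leq n-1$. The remaining task is to modify $y$ into an element $x$ that additionally approximates $x''$ on $S_n$, without disturbing the approximations already achieved on the first $n-1$ sets.

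To carry out this modification, I would apply Lemma~\ref{lem:pairwise_indicator_functions} to the pair $(S_1\cup\dotsb\cup S_{n-1},\,S_n)$. The union is compact (finite union of compact sets), and the two sets in the pair are $t$-independent (by pairwise $t$-independence of the $S_i$'s) and, in Situation $\mathfrak{m}$, incomparable. The required compatibility $w(y-x'')\geq w(z_n)$ for any common coarsening $w$ of this pair is derivable from the hypotheses: if $w$ has a refinement in $S_i$ for some $2\leq i\leq n-1$, then $y-x''\in z_i\BB_v$ gives $w(y-x'')\geq w(z_i)=w(z_n)$; if $w$ has a refinement only in $S_1$ among the first $n-1$, one combines $y-x'\in z_1\BB_v$ with the hypothesis $w(x'-x'')\geq w(z_1)=w(z_n)$. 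The element $b$ produced by Lemma~\ref{lem:pairwise_indicator_functions}, fed into Lemma~\ref{lem:approximation_lemma}, yields a switch $\chi$ with $\chi-1\in z_n\BB_v$ for $v\in S_1\cup\dotsb\cup S_{n-1}$ and $\chi\in z_n\BB_v$ for $v\in S_n$. Setting $x:=\chi y+(1-\chi)x''$ handles the $S_n$ condition via $x-x''=\chi(y-x'')\in z_n\BB_v$ once one controls $y-x''$ appropriately on $S_n$.

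The main obstacle is verifying that $x$ still meets the original precision requirements on $S_1,\dotsc,S_{n-1}$. Writing the perturbation as $x-y=(1-\chi)(x''-y)$, one needs $(1-\chi)(x''-y)\in z_1\BB_v$ on $S_1$ and $\in z_i\BB_v$ on $S_i$ for $2\leq i\leq n-1$. The switch precision $z_n$ may be weaker than the target $z_i$ on some $v\in S_i$, but the valuation of $x''-y$ provides compensating ``headroom'': for $v\in S_i$ with $i\geq 2$ one has $v(x''-y)\geq v(z_i)$ directly, while for $v\in S_1$ one has $v(x''-y)\geq v(z_1)$ by combining $v(y-x')\geq v(z_1)$ with $v(x'-x'')\geq v(z_1)$ (the latter on common coarsenings). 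Together with the additive properties of $\BB_v$ — automatic for valuations and ensured by condition~\ref{assumption:f_and_t}(\ref{condPlus}) for orderings — and a judicious choice of parameters in Lemma~\ref{lem:pairwise_indicator_functions} (exploiting shifting and scaling, Remark~\ref{rem:shifting_and_scaling}, to reduce to a normalized case such as $x'=0,x''=1$), this should yield the required containments and complete the inductive step.
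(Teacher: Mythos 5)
The interpolation step does not go through: there is a genuine gap in controlling the perturbation on $S_1,\dots,S_{n-1}$, and a second one on $S_n$. Writing $x-y=(1-\chi)(x''-y)$, for $v\in S_1$ you need (essentially) $(1-\chi)(x''-y)\in z_1\BB_v$, and you justify this by claiming $v(x''-y)\geq v(z_1)$ ``by combining $v(y-x')\geq v(z_1)$ with $v(x'-x'')\geq v(z_1)$''. But the hypothesis $w(x'-x'')\geq w(z_1)$ is only given for valuations $w$ that are \emph{common coarsenings} of an element of $S_1$ and an element of some $S_i$, $i\neq 1$; for $v\in S_1$ itself there is no bound whatsoever on $v(x'-x'')$ (e.g.\ if the blocks are pairwise independent, the only common coarsening is trivial and $x'-x''$ is completely unconstrained, yet the lemma must still hold). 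So the headroom you invoke is not available, and a switch of uniform precision $z_n$ cannot absorb the factor $x''-y$ on $S_1$. Symmetrically, $x-x''=\chi(y-x'')$ on $S_n$ requires control of $v(y-x'')$ for $v\in S_n$, but $y$ was produced by the inductive hypothesis with no reference to $S_n$; you acknowledge this (``once one controls $y-x''$ appropriately'') but the fix would require feeding $y$-dependent precisions into Lemma \ref{lem:pairwise_indicator_functions}, whose first-set precision is a single $z$ --- whereas you would need different, $y$-dependent precisions of the form $z_i(x''-y)^{-1}$ on each block of the union $S_1\cup\dots\cup S_{n-1}$. That is itself a multi-block value-approximation problem of essentially the same difficulty as the lemma being proved.

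The paper avoids all of this by exploiting the fact that only two target values occur: after normalizing $x''=0$, $x'=1$ by shifting and scaling, no induction and no auxiliary $y$ are needed. One applies Lemma \ref{lem:pairwise_indicator_functions} to each pair $(S_1,S_i)$, $i\neq 1$, with precisions $z_1$ and $z_i$ respectively, combines the resulting elements $b_i$ into a single $b=\phi(b_2,\dots,b_n)$ via Corollary \ref{cor:phimin} (so that $b\in t\BB_v\cap tz_1\BB_v$ for $v\in S_1$ and $b^{-1}\in t\BB_v\cap tz_i\BB_v$ for $v\in S_i$), and then Lemma \ref{lem:approximation_lemma} turns $b$ directly into the desired switch between $1$ and $0$ with the correct blockwise precisions. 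If you want to keep an interpolation-style argument, you would have to build the switch with blockwise precisions from the start, which is exactly what the pairing-plus-$\phi$ construction accomplishes.
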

\begin{proof}
  If $x' = x''$, we may take $x= x'$, so assume this is not the case.
  By shifting and scaling as in Remark \ref{rem:shifting_and_scaling},
  we may assume that $x''=0$ and $x'=1$. (Observe that all the hypotheses and the claim are invariant under shifting and scaling.)
  In this situation, the compatibility hypothesis 
  implies that $w(z_1)=w(z_i)\leq 0$ for every $w$ with a refinement in $S_1$ and a refinement in $S_i$, $i\neq 1$.
  
  For any $i \neq 1$, Lemma \ref{lem:pairwise_indicator_functions} provides an element $b_i \in K^\times$ with 
  $b_i\in t\BB_v \cap tz_1\BB_v$ for $v \in S_1$ and $b_i^{-1} \in t \BB_v \cap tz_i \BB_v$ for $v \in S_i$.
  Let $b = \phi(b_2, \dotsc, b_n)$.
  By Corollary \ref{cor:phimin}(1,2), $b \in t\BB_v \cap tz_1\BB_v$ for $v \in S_1$, and $b^{-1} \in t\BB_v \cap tz_i\BB_v$ for $v \in S_i$, $i \neq 1$.
  Hence Lemma \ref{lem:approximation_lemma} finishes the proof.
\end{proof}

\begin{theorem}\label{thm:main}
Let $S_1,\dots,S_n\subseteq\Sall(K)$, $t \in K^\times$, $x_1,\dots,x_n\in K$ and $z_1,\dots,z_n\in K^\times$.
\begin{enumerate}
    \item[$\condU$] Assume that Assumption \ref{assumption:f_and_t}
    holds for $S=S_1\cup\dots\cup S_n$ and  $t$.
    \item[$\condT$] Assume each $S_i$ is compact in the given topology.
    \item[$\condI$] Assume that for any valuation $w$ on $K$ with a refinement in $S_i$ and a refinement in $S_j$ we have $w(x_i-x_j) \geq w(z_i) = w(z_j)$; assume further that the $S_i$ are pairwise $t$-independent, and in Situation $\mathfrak{m}$ furthermore pairwise incomparable.
\end{enumerate}
Then there exists $x \in K$ with 
$$
 x-x_i \in z_i\BB_v \text{ for all }v \in S_i, \mbox{ for } i=1,\dots,n.
$$ 
\end{theorem}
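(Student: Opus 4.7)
The proof will proceed by induction on $n$, with Lemma \ref{lem:induction} as the main combining step. The base case $n\leq 1$ is immediate (take $x=x_1$), and after dropping any empty $S_i$ (which impose no constraint) we may assume every $S_i$ is nonempty.

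For the inductive step with $n\geq 2$, I would first apply the inductive hypothesis to $(S_2,\dots,S_n)$ with targets $(x_2,\dots,x_n)$ but \emph{tightened} weights $(tz_2,\dots,tz_n)$. Conditions \condU{} and \condT{} are inherited, and \condI{} survives the tightening because for any valuation $w$ with refinements in $S_i$ and $S_j$ ($i,j\geq 2$), the $t$-independence of $S_i$ and $S_j$ forces $t\in\mathcal{O}_w^\times$, so $w(tz_i)=w(z_i)=w(z_j)=w(tz_j)$ and hence $w(x_i-x_j)\geq w(z_i)=w(tz_i)$. This produces $x''\in K$ with $x''-x_i\in tz_i\BB_v$ for every $v\in S_i$ and every $i\geq 2$.

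Next I would invoke Lemma \ref{lem:induction} on $(S_1,\dots,S_n)$ with $x'=x_1$, the $x''$ just constructed, and weights $(tz_1,tz_2,\dots,tz_n)$. Compactness, pairwise $t$-independence, and (in Situation $\mathfrak{m}$) pairwise incomparability carry over directly; compatibility of the weights across distinct $S_i$ again follows from $w(t)=0$; and the crucial new hypothesis $w(x_1-x'')\geq w(tz_1)$ for $w$ a valuation with refinements in $S_1$ and some $S_i$, $i\neq 1$, follows from the valuation-theoretic triangle inequality
\[
  w(x_1-x'')\geq\min\bigl(w(x_1-x_i),\,w(x''-x_i)\bigr)\geq\min\bigl(w(z_i),\,w(tz_i)\bigr)=w(z_i)=w(tz_1),
\]
combining \condI{} of the theorem with the output of the inductive step (any refinement $v_i\in S_i$ of $w$ satisfies $\mathcal{O}_{v_i}\subseteq\mathcal{O}_w$, whence $x''-x_i\in tz_i\mathcal{O}_w$). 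The lemma then delivers $x\in K$ with $x-x_1\in tz_1\BB_v$ for $v\in S_1$ and $x-x''\in tz_i\BB_v$ for $v\in S_i$, $i\geq 2$.

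The conclusion $x-x_i\in z_i\BB_v$ is immediate for $i=1$ since $t\BB_v\subseteq\BB_v$. For $i\geq 2$ we have $x-x_i=(x-x'')+(x''-x_i)\in tz_i\BB_v+tz_i\BB_v$, and the main obstacle is that $\BB_v$ need not be additively closed when $v$ is an ordering. I would address this by establishing $\BB_v+\BB_v\subseteq t^{-1}\BB_v$ for all $v\in S$: for valuations it is automatic from $\mathfrak{m}_v+\mathfrak{m}_v\subseteq\mathfrak{m}_v$ and $t\in\mathcal{O}_v$; for orderings, condition \ref{assumption:f_and_t}(\ref{condPlus}) translates into $|t|\leq\frac{1}{2}$, whence for $x,y\in\mathfrak{m}_v$ one has $|x+y|<2\leq|t|^{-1}$, giving the inclusion in Situation $\mathfrak{m}$, while Situation $\mathcal{O}$ is exactly condition \ref{assumption:f_and_t}(\ref{condPlus}) itself. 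Thus $x-x_i\in tz_i\cdot t^{-1}\BB_v=z_i\BB_v$, completing the induction.
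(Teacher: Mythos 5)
Your proposal is correct and follows essentially the same route as the paper's proof: induction on $n$, applying the inductive hypothesis to $S_2,\dots,S_n$ with weights tightened by $t$, then invoking Lemma \ref{lem:induction} with $x'=x_1$ and the resulting $x''$, and finally absorbing the sum $tz_i\BB_v+tz_i\BB_v$ into $z_i\BB_v$ via condition \ref{assumption:f_and_t}(\ref{condPlus}). The only cosmetic difference is that you pass $tz_1$ rather than $z_1$ as the first weight to the lemma, which is harmless.
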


\begin{proof}
  We use induction on $n$.
  Without loss of generality assume that $S_i\neq\emptyset$ for all $i$.
  
  For $n = 1$ we may take $x = x_1$.
  
  For $n>1$, 
  we want to apply the induction hypothesis for the sets $S_2, \dotsc, S_n$,
  the elements $x_2,\dots,x_n$ and $tz_2,\dots,tz_n$.
  Note that, by the assumption of pairwise $t$-independence, $w(tz_i)=w(z_i)$ for every valuation $w$ with a refinement in $S_i$ and a refinement in $S_j$, $j\neq i$, 
  so the compatibility condition $w(x_i-x_j)\geq w(tz_i) = w(tz_j)$ is satisfied.
  The induction hypothesis thus
   gives an element $x'' \in K$ with 
  $x''-x_i \in z_i t\BB_v$ for all $v \in S_i$, $i \geq 2$.
  We now apply Lemma~\ref{lem:induction} to the sets $S_1,\dots,S_n$ and the elements
  $x': = x_1$, $x''$ and $z_1,z_2t,\dots,z_nt$.
  Note that if a valuation $w$ has a refinement  in $S_1$ and a refinement $v$ in $S_i$, $i\neq 1$, 
  then indeed 
  $$
   w(x'-x'')=w((x_1-x_i)+(x_i-x''))\geq w(z_{1}) = w(z_{i}),
  $$  
   as $x''-x_i\in z_it\BB_v\subseteq z_i\mathcal{O}_w$ implies that $w(x''-x_i)\geq w(z_i)$.
  We thus obtain $x\in K$ with $x-x_1 \in z_1\BB_v$ for all $v \in S_1$, 
  and $x - x'' \in z_it\BB_v$ for all $v \in S_i$, $i \neq 1$.
  Finally, for $v\in S_i$, $i\neq 1$, we get that
  $$
  x- x_i =(x-x'')+(x''-x_i)\in z_it\BB_v+z_it\BB_v\subseteq z_it\BB_v(\mathcal{O}_v+\mathcal{O}_v)\subseteq z_it\BB_v\cdot t^{-1}\mathcal{O}_v=z_i \BB_v
 $$ 
 by \ref{assumption:f_and_t}(\ref{condPlus}).
  This finishes the induction.
\end{proof}

\begin{remark}\label{rem:thm_main}
As promised, we now explain how to deduce Theorems \ref{thm:intro_constructible}, \ref{thm:intro_Zariski_dual_closed} 
and \ref{thm:intro_orderings_compatible} from the introduction.

Theorem \ref{thm:intro_constructible} is obtained 
by applying Theorem \ref{thm:main} in Situation~$\mathfrak{m}$ with $t=\pi$:
${\ref{thm:intro_constructible}}\condU$ implies ${\ref{thm:main}}\condU$ (see Example \ref{ex:V}(2)),
and ${\ref{thm:intro_constructible}}\condT$
together with ${\ref{thm:intro_constructible}}\condU$ implies
${\ref{thm:main}}\condT$
(see Example \ref{rem:topology.coincidence}).
%By ${\ref{thm:intro_constructible}}\condI$, the $S_{i}$ are pairwise incomparable,
%so necessarily pairwise disjoint, 
%and if $t$ is a common uniformizer, then the $S_{i}$ are pairwise $t$-independent.
The $S_i$ are pairwise $t$-independent (Example \ref{ex:tincomp})
and pairwise incomparable since pairwise disjoint,
so
${\ref{thm:intro_constructible}}\condI$ implies ${\ref{thm:main}}\condI$.

Theorem \ref{thm:intro_Zariski_dual_closed} is obtained by applying 
Theorem~\ref{thm:main} in Situation $\mathcal{O}$ with $t=1$:
${\ref{thm:intro_Zariski_dual_closed}}\condU$ implies ${\ref{thm:main}}\condU$ (see Example \ref{ex:V}(1)),
and
${\ref{thm:intro_Zariski_dual_closed}}\condT$ implies ${\ref{thm:main}}\condT$ (see Remark \ref{rem:topologies}).
In fact, since the $S_{i}$ are closed in the Hochster dual of the Zariski topology,
then the $S_{i}$ are also closed under coarsenings,
and so
${\ref{thm:intro_Zariski_dual_closed}}\condI$ implies ${\ref{thm:main}}\condI$.

Finally, Theorem \ref{thm:intro_orderings_compatible} 
follows from Theorem \ref{thm:main} in Situation $\mathfrak{m}$
using Example \ref{ex:V}(3)
and taking Remark~\ref{rem:Harrison} into consideration.
\end{remark}

\begin{remark}\label{T_needed}
Note that the set of $v\in\Sall(K)$ for which a given $x\in K$ 
satisfies an approximation condition 
$x-x_i \in z_i\BB_v$
is always open-closed in the constructible topology, in particular compact both in $\mathcal{T}_{\rm Zar}$ and in $\mathcal{T}_{{\rm Zar}^*}$.
This explains why condition \ref{thm:main}\condT{} is natural.
It is also clear that without this condition the theorem must fail.
\end{remark}

\begin{remark}\label{rem:condition_I}\label{I_needed}
It is obvious that assumption {\ref{thm:main}}\condI{} cannot simply be dropped.
It is also clear that 
if there exists $x$ with $x-x_i \in z_i\mathcal{O}_v$
for all $v \in S_i$, $i=1,\dots,n$,
then
any $w$ as in ${\ref{thm:main}}\condI$ must satisfy
$w(x_i-x_j)\geq\min\{w(z_i),w(z_j)\}$,
but the condition $w(x_i-x_j)\geq w(z_i) = w(z_j)$, 
which also appears in Theorem \ref{thm:Ribenboim}, 
cannot be deduced and could possibly be relaxed.
However, the following example shows that one cannot replace this compatibility condition by $w(x_i-x_j) \geq \min\{w(z_i),w(z_j)\}$
in Theorem \ref{thm:main}:

If $K=\mathbb{Q}(T)$ with $w$ the $T$-adic valuation,
and
$v_1$ and $v_2$ the composites of $w$
with the 2- and 3-adic valuations, respectively, 
then there is no $x$ with
$v_1(x-(2T)^{-1}) \geq v_1(T^{-1})$
and
$v_2(x) \geq 0$,
since $v_1(x)=v_1((2T)^{-1})$ implies $w(x)=-1$, but $v_2(x)\geq0$
would imply $w(x)\geq0$.
In this case
we might have sought to apply Theorem \ref{thm:main} in Situation $\mathcal{O}$ with
$S_1=\{v_1\}$, $S_2=\{v_2\}$, $x_{1}=(2T)^{-1}$, $x_{2}=0$, $z_{1}=T^{-1}$, and $z_{2}=1$.
Then assumption \condU{} is satisfied by \ref{ex:V}(1),
and $w(x_{1}-x_{2})=-1=\min\{w(z_{1}),w(z_{2})\}$.
\end{remark}

\begin{remark}
We can conclude from Theorem \ref{thm:intro_orderings_compatible}
that every field $K$ for which 
\begin{enumerate}
\item[$(\ast)$] any two orderings on $K$ induce distinct orderings on the residue field of their finest common coarsening $w$
\end{enumerate}
is an SAP-field in the sense of \cite[\S6]{Prestel},
%i.e.~the sets $\{v\in\Sord(K):a\geq_v0\}$ form a basis of the topology.
i.e.~for every two disjoint closed subsets $S_1$ and $S_2$ of $\Sord(K)$ there is some $a\in K$ with $a>_v0$ for all $v\in S_1$ and $a<_v0$ for all $v\in S_2$.
This is in fact a special case of \cite[Theorem 9.1]{Prestel},
which in particular states that $K$ is SAP if and only if for every valuation $w$ on $K$ with formally real residue field either 
\begin{enumerate}
\item[(i)] the value group $\Gamma_w$ is $2$-divisible, or 
\item[(ii)] $|\Gamma_w/2\Gamma_w|=2$ and the residue field $Kw$ carries a unique ordering.
\end{enumerate} 
Due to the Baer-Krull theorem, our condition $(\ast)$ is precisely that (i) holds always. 
We note that Theorem \ref{thm:intro_orderings_compatible},
and therefore also Theorem \ref{thm:main}, is no longer true if we
replace $(\ast)$ by the weaker condition that $K$ is an SAP-field,
as the example $K=\mathbb{R}((T))$ with its two orderings $\leq_{0^+}$ and $\leq_{0^-}$ shows: The approximation problem with
$S_1=\{\leq_{0^+}\}$, $S_2=\{\leq_{0^-}\}$, $x_1=0$, $x_2=1$, $z_1=z_2=\frac{1}{3}$ satisfies the compatibility condition
$w(x_1-x_2)\geq w(z_1)=w(z_2)$ where $w$ is the $T$-adic valuation,
but is not solvable.
\end{remark}

\section{Applications and counterexamples}
\label{sec:value}

We start by deducing a few corollaries that
resemble similar approximation theorems in the literature.
We will phrase several of these corollaries 
for compact sets in $\mathcal{T}_{{\rm Zar}^*}$,
but recall that this property is satisfied for example
by every closed set in the constructible topology
(Remark~\ref{rem:topologies}).

\subsection{Value approximation}
The following `value approximation’ theorem
is our version of
Ribenboim’s \cite[Theorem 5]{Ribenboim},
see also 
\cite[p.~135 Th\'eor\`eme 1]{Ribenboim_book}
and
\cite[Theorem 28.12]{Warner},
which in the case of independent valuations 
appears already in Krull’s seminal paper \cite[Satz 15]{Krull}.
In \cite[Proposition 2.6.6]{ErshovMultiValuedFields} a similar result with a different condition \condU{} is given, and condition \condT{} is replaced by compactness in the Zariski topology.
\begin{corollary}
\label{thm:value_approx}
Let $S_1,\dots,S_n\subseteq\Sval(K)$, $t \in K^\times$, and $z_1,\dots,z_n\in K^\times$.
\begin{enumerate}
\item[$\condU$] Assume that Assumption \ref{assumption:f_and_t} holds for $S=S_1\cup\dots\cup S_n$ and $t$.
\item[$\condT$] Assume each $S_i$ is compact in $\mathcal{T}_{\rm Zar^*}$.
\item[$\condI$] Assume that  the $S_i$ are pairwise $t$-independent and pairwise incomparable.
\end{enumerate}
Then there exists $z \in K^\times$ with 
$$
 v(z)=v(z_i) \text{ for all }v \in S_i, \mbox{ for } i=1,\dots,n,
$$ 
if and only if
for any valuation $w$ on $K$ with a refinement in $S_i$ and a refinement in $S_j$ we have $w(z_i) = w(z_j)$.
\end{corollary}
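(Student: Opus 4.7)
The plan is to reduce both directions to direct computation, with the ``if'' direction being a clean application of Theorem \ref{thm:main}.

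For the ``only if'' direction, suppose such a $z$ exists and let $w$ be a valuation on $K$ with a refinement $v_1 \in S_i$ and a refinement $v_2 \in S_j$. Since $w$ is a coarsening of $v_1$, its value group $\Gamma_w$ is the quotient of $\Gamma_{v_1}$ by a convex subgroup, and $w$ factors through $v_1$. Hence $v_1(z) = v_1(z_i)$ implies $w(z) = w(z_i)$; applying the same argument to $v_2$ gives $w(z) = w(z_j)$, so $w(z_i) = w(z_j)$.

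For the ``if'' direction, I would apply Theorem \ref{thm:main} in Situation $\mathfrak{m}$ with the specific choice of approximation targets $x_i := z_i$ and radii $z_i$ (for $i = 1, \dotsc, n$). The hypotheses \condU, \condT, the pairwise $t$-independence and the pairwise incomparability of the $S_i$ transfer verbatim from the assumptions of the corollary (compactness in $\mathcal{T}_{\rm Zar^*}$ being exactly the topological assumption required in Situation $\mathfrak{m}$). The key point is the compatibility part of \ref{thm:main}\condI, which with $x_i = z_i$ reads
\[
  w(z_i - z_j) \geq w(z_i) = w(z_j)
\]
for every valuation $w$ with refinements in both $S_i$ and $S_j$. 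The equality $w(z_i) = w(z_j)$ is exactly the hypothesis of the corollary, and the inequality $w(z_i - z_j) \geq \min\{w(z_i), w(z_j)\} = w(z_i)$ is automatic from the ultrametric inequality. Having verified all hypotheses, Theorem \ref{thm:main} produces $z \in K$ with $z - z_i \in z_i \mathfrak{m}_v$ for every $v \in S_i$, that is, $v(z - z_i) > v(z_i)$. Writing $z = z_i + (z - z_i)$ and applying the ultrametric inequality once more then gives $v(z) = v(z_i)$, as required (and in particular $z \neq 0$).

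I expect no serious obstacle: the entire cleverness lies in the choice $x_i = z_i$, which makes the compatibility hypothesis of the main theorem collapse to the hypothesis of the corollary together with the free ultrametric inequality; everything else is the translation between ``$v(z - z_i) > v(z_i)$'' and ``$v(z) = v(z_i)$''.
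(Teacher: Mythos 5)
Your proposal is correct and follows exactly the paper's own route: the ``only if'' direction by pushing the equalities $v(z)=v(z_i)$ down to the common coarsening $w$, and the ``if'' direction by invoking Theorem \ref{thm:main} in Situation $\mathfrak{m}$ with $x_i=z_i$, the compatibility condition collapsing to the hypothesis plus the ultrametric inequality. No issues.
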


\begin{proof}
It is clear that if such a $z$ exists then
every $w$ with a refinement in $S_i$ and a refinement in $S_j$ satisfies $w(z_i)=w(z)=w(z_j)$.
Conversely, if that compatibility condition is satisfied
then the claim follows from
Theorem \ref{thm:main} in Situation $\mathfrak{m}$
with $x_i=z_i$ for all $i$.
\end{proof}

\begin{remark}
It is worth pointing out in this context that
any form of value approximation theorem like Corollary \ref{thm:value_approx}
implies the existence of a function $\phi$ as in 
Proposition \ref{prop:phiminmax},
but of course it need in general not be given by a polynomial.
It also implies the existence of elements $b$
as in Lemma \ref{lem:pairwise_indicator_functions}.
\end{remark}

\begin{remark}\label{rem:zariski_value_approx_fails}
  The assumption ${\ref{thm:value_approx}}\condT$ cannot be replaced by compactness in the Zariski topology, as one can show with the following example in number fields -- here an obstruction to value approximation is given by the class group.
  
  Consider the number fields $K=\mathbb{Q}(\sqrt{-5})$; it is well-known that $K$ has class number $h_K=2$, with $\mathfrak{p}_{0}=(2, 1+\sqrt{-5})$ an example of a non-principal ideal, see for instance \cite[pp. 132--133]{MarcusNumberFields}.
  The extension $L=K(\sqrt{-1})$ of $K$ is of degree $2=h_K$, and one can verify that it is unramified at all places. (Note that since $\mathbb{Q}(\sqrt{-1})/\mathbb{Q}$ is unramified over all finite primes except $2$, it suffices to check that the primes of $K$ above $2$ are unramified in $L$.)
  Hence $L$ is the Hilbert class field of $K$ (see \cite[VI, Proposition 6.9]{Neu}).
  
  Consider the polynomial $f=X^2-X-1$, and let $S$ be the set of valuations on $K$ corresponding to prime ideals inert in $L/K$; no such prime ideal has residue characteristic $5$, since the prime ideal $(\sqrt{-5})$ is split in $L/K$.
  One verifies that $L/K$ is generated by a zero of $f$, so in particular $f$ is irreducible over $K$.
  The discriminant of $f$ is $5$. By standard results on the splitting of prime ideals in extensions, see \cite[I, Proposition 8.3]{Neu}, for any prime $\mathfrak p$ of $K$ inert in $L$ (which necessarily does not contain $5$), the reduction of $f$ has no zero in $\mathcal{O}_K/\mathfrak p$.
  In particular, Assumption \ref{assumption:f_and_t} is satisfied for the set $S$ and $t=1$ by Example \ref{ex:V}(1).
  
  However, Corollary \ref{thm:value_approx} does not transfer to this situation: Writing $S_1 = \{ \mathcal{O}_{\mathfrak{p}_0} \}$, $S_2 = S \setminus S_1$, we claim that there is no element $x \in K^\times$ with $v(x) = v(1+\sqrt{-5})$ for $v \in S_1$ and $v(x) = v(1)$ for $v \in S_2$.
  If $x$ were such an element, the ideal $(x) \mathfrak{p}_0^{-1}$ would be a product of prime ideals not inert in $L/K$. Since $L/K$ is unramified of degree $2$, it would be a product of prime ideals split in $L/K$, all of which are principal ideals in $K$ by the theory of the Hilbert class field (\cite[VI, Corollary 7.4]{Neu}). Hence $\mathfrak{p}_0$ itself would be principal, which is a contradiction. Therefore such an $x$ cannot exist.

  One can check that any Zariski open subset of $S$ is empty or cofinite, hence the sets $S_1$ and $S_2$ are Zariski compact.
\end{remark}

\subsection{Residue approximation}
The following `residue approximation’
appears for finitely many independent valuations 
already in \cite[Satz 17]{Krull}
(see also \cite[Lemme 6]{Ribenboim})
and for finitely many incomparable valuations
in 
\cite[p.~143 Proposition 1]{Ribenboim_book},
see also
\cite[VI.7.2 Corollary 1]{Bourbaki} and \cite[Theorem 10.2.1]{Efrat}.
A version for finitely many arbitrary valuations can be found in \cite[Lemme 11]{Ribenboim}.

\begin{corollary}
\label{thm:residue_approx}
Let $S_1,\dots,S_n\subseteq\Sval(K)$, $t \in K^\times$, and $x_1,\dots,x_n\in K$
with $x_i\in\mathcal{O}_v$ for each $v\in S_i$ and each $i$.
\begin{enumerate}
\item[$\condU$] Assume that Assumption \ref{assumption:f_and_t} holds for $S=S_1\cup\dots\cup S_n$ and $t$.
\item[$\condT$] Assume each $S_i$ is compact in $\mathcal{T}_{{\rm Zar}^*}$.
\item[$\condI$] Assume that  the $S_i$ are pairwise $t$-independent and pairwise incomparable.
\end{enumerate}
Then there exists $x \in K$ with $x\in\mathcal{O}_v$ for each $v\in S_i$ for every $i$ such that
$$
 \overline{x}=\overline{x_i} \mbox{ in } Kv \text{ for all }v \in S_i, \mbox{ for } i=1,\dots,n.
$$ 
\end{corollary}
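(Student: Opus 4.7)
The plan is to obtain the conclusion as a direct application of Theorem \ref{thm:main} in Situation $\mathfrak{m}$ with the choice $z_i=1$ for all $i$. In Situation $\mathfrak{m}$ the conclusion of that theorem specialises to $x-x_i\in\mathfrak{m}_v$ for all $v\in S_i$, which combined with the hypothesis $x_i\in\mathcal{O}_v$ gives $x\in\mathcal{O}_v$ and $\overline{x}=\overline{x_i}$ in $Kv$, exactly as required.

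It remains to verify that the hypotheses of Theorem \ref{thm:main} are met. Conditions \condU{} and \condT{} (for the topology $\mathcal{T}_{\mathrm{Zar}^*}$, which is the topology used in Situation $\mathfrak{m}$) are inherited verbatim from the hypotheses of the corollary. Pairwise $t$-independence and pairwise incomparability of the $S_i$ are likewise assumed. The only nonobvious point is the compatibility part of \condI{}: for any valuation $w$ on $K$ possessing a refinement $v\in S_i$ and a refinement $v'\in S_j$, one must show $w(x_i-x_j)\geq w(z_i)=w(z_j)$. Since $z_i=z_j=1$, this amounts to $x_i-x_j\in\mathcal{O}_w$, which follows immediately from $x_i\in\mathcal{O}_v\subseteq\mathcal{O}_w$ and $x_j\in\mathcal{O}_{v'}\subseteq\mathcal{O}_w$. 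With all hypotheses checked, Theorem \ref{thm:main} yields the desired $x$. There is no real obstacle here; the corollary is purely a matter of identifying the correct specialisation of the main theorem.
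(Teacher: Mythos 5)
Your proposal is correct and follows exactly the paper's own route: the paper likewise deduces the corollary by applying Theorem \ref{thm:main} in Situation $\mathfrak{m}$ with $z_i=1$ for all $i$. Your explicit verification of the compatibility condition in \condI{} (that $x_i-x_j\in\mathcal{O}_w$ follows from $x_i,x_j$ lying in the valuation rings of the respective refinements of $w$) is a detail the paper leaves implicit, but it is exactly right.
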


\begin{proof}
This follows immediately from Theorem \ref{thm:main}
in Situation $\mathfrak{m}$
by choosing $z_i=1$ for all $i$.
\end{proof}

\subsection{$p$-valuations}

We now discuss approximation on the sets ${\rm S}_\pi^e(K)$,
which in particular includes the special case of
$p$-valuations of bounded $p$-ramification index, cf.~Example \ref{ex:p-vals_plus_orderings}.
We equip
${\rm S}_\pi^e(K)$ with the constructible topology.
Recall that ${\rm S}_\pi^e(K)$ is compact
and the topology coincides with 
the topology induced by $\mathcal{T}_{{\rm Zar}^*}$
(Example \ref{rem:topology.coincidence}).
Let
$$
  {\rm R}_{\pi}^{e}(K)=\bigcap_{v\in{\rm S}_{\pi}^{e}(K)}\mathcal{O}_{v}
$$
denote the corresponding holomorphy ring.

\begin{corollary}\label{cor:p-val}
Let $S_{1},\dots,S_{n}\subseteq{\rm S}_{\pi}^{e}(K)$ be disjoint and closed,
let $x_{1},...,x_{n}\in K$,
and let $z_{1},...,z_{n}\in K^{\times}$.
Assume that for any valuation $w$ on $K$ with a refinement in $S_{i}$ and a refinement in $S_{j}$ we have $w(x_{i}-x_{j})\geq w(z_{i})=w(z_{j})$.
Then there exists $x\in K$ with
$$
	v(x-x_i) > v(z_i) \text{ for all }v \in S_i, \mbox{ for } i=1,\dots,n.
$$
\end{corollary}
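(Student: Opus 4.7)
The plan is to derive Corollary \ref{cor:p-val} as a direct specialization of Theorem \ref{thm:main} in Situation $\mathfrak{m}$ with $t=\pi$, applied to $S := S_1 \cup \dots \cup S_n \subseteq {\rm S}_\pi^e(K)$. Once the three conditions $\condU$, $\condT$, $\condI$ of Theorem \ref{thm:main} are checked, the conclusion $x - x_i \in z_i \mathfrak{m}_v$ for all $v \in S_i$ translates immediately into the required strict inequality $v(x-x_i) > v(z_i)$.

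Verifying $\condU$ is a direct invocation of Example \ref{ex:V}(2), which supplies Assumption \ref{assumption:f_and_t} on $S \subseteq {\rm S}_\pi^e(K)$ with $t = \pi$ and $f = X^{e+1} - \pi$. For $\condT$ I would appeal to Example \ref{rem:topology.coincidence}: on ${\rm S}_\pi^e(K)$ the topologies $\mathcal{T}_{\rm Zar}$, $\mathcal{T}_{{\rm Zar}^*}$ and $\mathcal{T}_{\rm con}$ coincide, and the ambient space ${\rm S}_\pi^e(K)$ is compact in this common topology. Since each $S_i$ is closed in ${\rm S}_\pi^e(K)$, it inherits compactness in $\mathcal{T}_{{\rm Zar}^*}$, which is the topology required in Situation $\mathfrak{m}$.

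For $\condI$, the compatibility clause $w(x_i-x_j) \geq w(z_i) = w(z_j)$ is the hypothesis verbatim. Pairwise $t$-independence of the $S_i$ follows from Example \ref{ex:tincomp}: any two distinct elements of ${\rm S}_\pi^e(K)$ are $\pi$-independent, and disjointness of the $S_i$ supplies distinctness across different indices. The remaining requirement of pairwise incomparability (needed in Situation $\mathfrak{m}$) is automatic from $\pi$-independence in this setting: if $v \in S_i$ and $w \in S_j$ with $i \neq j$ were comparable, say $\mathcal{O}_v \subseteq \mathcal{O}_w$, then $v \vee w = w$, so $\pi$-independence would force $\pi \in \mathcal{O}_w^\times$, contradicting $w(\pi) > 0$.

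The argument is essentially bookkeeping; no fresh analytic input is needed beyond what Theorem \ref{thm:main} already provides. The only mildly delicate point is the last one, namely recognising that on ${\rm S}_\pi^e(K)$ the notions of $\pi$-independence and incomparability collapse to disjointness, so that the two clauses of $\condI$ in Situation $\mathfrak{m}$ reduce to the disjointness hypothesis.
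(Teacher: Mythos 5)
Your proposal is correct and follows exactly the paper's route: apply Theorem \ref{thm:main} in Situation $\mathfrak{m}$ with $t=\pi$, using Example \ref{ex:V}(2) for \condU, Example \ref{rem:topology.coincidence} for \condT, and Example \ref{ex:tincomp} for \condI. Your explicit derivation of pairwise incomparability from $\pi$-independence (comparability would force $w(\pi)=0$, contradicting $w\in{\rm S}_\pi^e(K)$) just fills in a detail the paper states without proof.
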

\begin{proof}
This follows from Theorem \ref{thm:main} in Situation $\mathfrak{m}$
% using either 
% by Example \ref{ex:V}(1) with $f=X^{q^f}-X+1$,
% or Example \ref{ex:V}(2) with $\pi=t_{\mfp}$.
using Example \ref{ex:V}(2).
% Note that two distinct $p$-valuations are always incomparable.
Note that two distinct valuations in ${\rm S}_{\pi}^{e}(K)$ are always incomparable and $\pi$-independent (Example \ref{ex:tincomp}).
\end{proof}

\begin{corollary}
Let $S_1,\dots,S_n\subseteq{\rm S}_{\pi}^{e}(K)$ be disjoint and closed,
$x_1,\dots,x_n\in{\rm R}_{\pi}^{e}(K)$ and $k_1,\dots,k_n\in\mathbb{N}$.
Then there exists $x \in K$ with 
$$
 v(x-x_i) > v(\pi^{k_i}) \text{ for all }v \in S_i, \mbox{ for } i=1,\dots,n.
$$ 
\end{corollary}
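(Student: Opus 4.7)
The plan is to apply Corollary \ref{cor:p-val} directly with $z_i = \pi^{k_i}$.  The data $S_1,\dots,S_n$ are disjoint and closed in ${\rm S}_\pi^e(K)$, and the topology on ${\rm S}_\pi^e(K)$ agrees with that induced from $\mathcal{T}_{{\rm Zar}^*}$ (Example~\ref{rem:topology.coincidence}) and makes ${\rm S}_\pi^e(K)$ compact, so each $S_i$ is compact in $\mathcal{T}_{{\rm Zar}^*}$.  Thus the hypotheses \condU, \condT{} of Corollary~\ref{cor:p-val} hold (via Example~\ref{ex:V}(2)), and pairwise incomparability plus pairwise $\pi$-independence of the $S_i$ follow from disjointness together with Example~\ref{ex:tincomp}.

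It remains only to verify the compatibility condition.  For $i=j$ there is nothing to show.  For $i\neq j$, let $w$ be a valuation on $K$ with a refinement $v\in S_i$ and a refinement $v'\in S_j$.  I claim $w(\pi)=0$.  Since $S_i\cap S_j=\emptyset$ we have $v\neq v'$, and because distinct elements of ${\rm S}_\pi^e(K)$ are pairwise incomparable (as noted in the proof of Corollary~\ref{cor:p-val}), neither of $v,v'$ can be a coarsening of the other.  Hence $w$ is a \emph{proper} coarsening of $v$, corresponding to a nonzero convex subgroup $\Delta$ of $\Gamma_v$.  The convex subgroups of $\Gamma_v$ are totally ordered by inclusion, and $\mathbb{Z}\subseteq\Gamma_v$ is convex, so either $\Delta\subseteq\mathbb{Z}$ or $\Delta\supseteq\mathbb{Z}$.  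Any nonzero convex subgroup of $\mathbb{Z}$ equals $\mathbb{Z}$, so in either case $\mathbb{Z}\subseteq\Delta$.  Since $v(\pi)\in\{1,\dots,e\}\subseteq\mathbb{Z}\subseteq\Delta$, we get $w(\pi)=0$, and consequently
\[
 w(z_i)=k_i w(\pi)=0=k_j w(\pi)=w(z_j).
\]

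Finally, since $x_i,x_j\in {\rm R}_\pi^e(K)\subseteq\mathcal{O}_v\subseteq\mathcal{O}_w$, we have $w(x_i-x_j)\geq 0 = w(z_i)=w(z_j)$, verifying the compatibility condition of Corollary~\ref{cor:p-val}.  Applying that corollary yields the desired $x\in K$.  The only real content of the argument is the convex-subgroup calculation showing $w(\pi)=0$ for any proper coarsening $w$ of a valuation in ${\rm S}_\pi^e(K)$; everything else is a direct unwinding of hypotheses.
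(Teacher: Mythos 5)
Your proof is correct and follows essentially the same route as the paper: apply Corollary \ref{cor:p-val} with $z_i=\pi^{k_i}$, note that any common coarsening $w$ of refinements in distinct $S_i$, $S_j$ is a proper coarsening, and conclude $w(x_i-x_j)\geq 0=w(\pi^{k_i})=w(\pi^{k_j})$ from $x_i,x_j\in{\rm R}_\pi^e(K)\subseteq\mathcal{O}_w$. The only difference is that you spell out the convex-subgroup argument for $w(\pi)=0$, which the paper leaves implicit; that step is correct and worth making explicit.
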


\begin{proof}
This follows from Corollary \ref{cor:p-val}:
If $w$ is a valuation with a refinement $v_i$ in $S_i$ and a refinement $v_j$ in $S_j$, then $x_i,x_j\in{\rm R}_{\pi}^{e}(K)\subseteq \mathcal{O}_{v_i}\subseteq\mathcal{O}_w$ and $w$ is a proper coarsening of both $v_i$ and $v_j$, hence $w(x_i-x_j)\geq 0=w(\pi^{k_i})=w(\pi^{k_j})$.
\end{proof}

The argument for the following consequence is also contained in \cite[Propriété II.3.2]{DarniereThese}.

\begin{proposition}\label{prop:pvalring}
For every $\pi\in K^{\times}$ and $e>0$,
the following statements are equivalent:
\begin{enumerate}
\item For every $v\in{\rm S}_{\pi}^{e}(K)$, the holomorphy ring ${\rm R}_{\pi}^{e}(K)$ is dense in $\mathcal{O}_v$ in the $v$-topology.
\item The elements of ${\rm S}_{\pi}^{e}(K)$ are pairwise independent.
\end{enumerate}
\end{proposition}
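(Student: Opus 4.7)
The plan is to prove the two implications separately, both via Corollary~\ref{cor:p-val}, using the fact (Example~\ref{rem:topology.coincidence}) that the three topologies coincide on ${\rm S}_\pi^e(K)$, which makes it a compact Hausdorff space with a basis of clopen sets coming from the subbasis of that example.

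For $(2)\Rightarrow(1)$, I will fix $v\in{\rm S}_\pi^e(K)$, $x\in\mathcal{O}_v$ and $z\in K^\times$ and seek $y\in{\rm R}_\pi^e(K)$ with $v(y-x)>v(z)$. After replacing $z$ by $1$ in the case $v(z)<0$ (which only strengthens the required approximation), I may assume $z\in\mathcal{O}_v$. I will then set
\[ S_1=\{v'\in{\rm S}_\pi^e(K):x\in\mathcal{O}_{v'}\text{ and }z\in\mathcal{O}_{v'}\},\quad S_2={\rm S}_\pi^e(K)\setminus S_1, \]
both of which are clopen in ${\rm S}_\pi^e(K)$ by Example~\ref{rem:topology.coincidence}, with $v\in S_1$, and apply Corollary~\ref{cor:p-val} with $x_1=x$, $z_1=z$, $x_2=0$, $z_2=1$. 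Assumption~(2) forces any common coarsening of an element of $S_1$ and an element of $S_2$ to be trivial, so the compatibility condition holds automatically; pairwise $\pi$-independence and incomparability within ${\rm S}_\pi^e(K)$ are given by Example~\ref{ex:tincomp}. The resulting $y$ will satisfy $v'(y-x)>v'(z)\geq 0$ for $v'\in S_1$ and $v'(y)>0$ for $v'\in S_2$, so in either case $y\in\mathcal{O}_{v'}$, whence $y\in{\rm R}_\pi^e(K)$ with $v(y-x)>v(z)$.

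For $(1)\Rightarrow(2)$ I will argue contrapositively: starting from distinct $v_1,v_2\in{\rm S}_\pi^e(K)$ with $w:=v_1\vee v_2$ nontrivial, I first observe that since $\mathbb{Z}$ has no nontrivial proper convex subgroups, the convex subgroup $\Delta_i\subseteq\Gamma_{v_i}$ defining $w$ must contain $\mathbb{Z}$; hence $w(\pi)=0$, and the induced valuations $\bar v_1,\bar v_2$ on $Kw$ are nontrivial and independent. Theorem~\ref{thm:pairwise_indep_fin_approx} then produces $\bar x\in Kw$ with $\bar v_1(\bar x)>0$ and $\bar v_2(\bar x)<0$, and any lift $x\in\mathcal{O}_w$ of $\bar x$ satisfies $x\in\mathfrak{m}_{v_1}$ and $x\notin\mathcal{O}_{v_2}$. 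Using hypothesis~(1), I will choose $z\in K^\times$ with $v_1(z)\in\Gamma_{v_1}^{>0}\setminus\Delta_1$ (possible because $\Delta_1\subsetneq\Gamma_{v_1}$) and find $y\in{\rm R}_\pi^e(K)$ with $v_1(y-x)>v_1(z)$; by convexity this forces $v_1(y-x)$ to lie above all of $\Delta_1$, whence $w(y-x)>0$ and $\bar y=\bar x$ in $Kw$. But then $\bar v_2(\bar y)=\bar v_2(\bar x)<0$, contradicting $y\in{\rm R}_\pi^e(K)\subseteq\mathcal{O}_{v_2}$.

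The step requiring the most care in $(2)\Rightarrow(1)$ is the choice of clopen partition: including the constraint $z\in\mathcal{O}_{v'}$ in the definition of $S_1$ is precisely what guarantees $v'(z_1)\geq 0$ uniformly across $S_1$, converting the strict approximation inequality into integrality of $y$. The analogous delicate point in the converse is selecting $z$ with $v_1(z)\in\Gamma_{v_1}^{>0}\setminus\Delta_1$, which makes the $v_1$-approximation sharp enough to detect a single residue class modulo $\mathfrak{m}_w$ despite the possibly higher rank of $v_1$.
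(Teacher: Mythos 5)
Your proof is correct and follows essentially the same route as the paper: direction $(2)\Rightarrow(1)$ is the same clopen partition of ${\rm S}_\pi^e(K)$ fed into Corollary~\ref{cor:p-val}, and direction $(1)\Rightarrow(2)$ is the same contradiction via an element that is $v_1$-small but $v_2$-large modulo the common coarsening $w$. The only cosmetic difference is that in the second direction you construct that element by hand in the residue field $Kw$ via Theorem~\ref{thm:pairwise_indep_fin_approx}, whereas the paper invokes Proposition~\ref{prop:RibenboimLocalities} (whose proof is exactly your residue-field argument).
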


\begin{proof}
$(1)\Rightarrow(2)$: Suppose two distinct $v_1,v_2\in{\rm S}_{\pi}^{e}(K)$
have a nontrivial common coarsening $w$.
Let $a\in K^\times$ with $w(a)>0$.
As $v_1$ and $v_2$ are incomparable, Proposition \ref{prop:RibenboimLocalities}
gives a $y\in K$ with $v_1(y)\geq v_1(\pi)>0$ and $v_2(y)\leq v_2(\pi^{-1})<0$.
If ${\rm R}_{\pi}^{e}(K)$ was dense in $\mathcal{O}_{v_1}$, then there would
exist $x\in K$ with $v_1(x)\geq0$, $v_2(x)\geq 0$ and $v_1(x-y)>v_1(a)$.
The latter condition implies that $w(x-y)\geq w(a)>0$,
but $v_2(y)<0\leq v_2(x)$ gives $v_2(x-y)<0$ and thus $w(x-y)\leq 0$,
a contradiction.

$(2)\Rightarrow(1)$: Let $v_0\in{\rm S}_{\pi}^{e}(K)$, $y\in\mathcal{O}_{v_{0}}$ and $z\in K^\times$. We want to show there exists $x\in {\rm R}_{\pi}^{e}(K)$ with $v_0(x-y)>v_0(z)$.
Without loss of generality, $v_0(z)\geq 0$.
 Let 
 $$
  S_1=\{v\in{\rm S}_{\pi}^{e}(K) : v(y)\geq 0\wedge v(z)\geq 0\}
 $$ 
 and $S_2={\rm S}_{\pi}^{e}(K)\setminus S_1$. 
Since in particular $S_1$ and $S_2$ are independent, 
we can apply Corollary~\ref{cor:p-val} to $x_1=y$, $x_2=0$, $z_1=z$, $z_2=1$ to get $x\in K$ with
$v(x-y)>v(z)\geq 0$ for $v\in S_1$, and $v(x)\geq 0$ for $v\in S_2$.
In particular $v_0(x-y)>v_0(z)$ and $x\in{\rm R}_{\pi}^{e}(K)$.
\end{proof}

\begin{example}
  An example where Proposition \ref{prop:pvalring} 
  can be applied is when $K$ is a so-called pseudo $p$-adically closed field: If we set $\pi = p$ and $e=1$, then 
  ${\rm S}_\pi^e(K)$ equals the set of $p$-valuations of $p$-rank $1$, 
  and any two of these are independent, see Theorem C and the remark following Proposition D of \cite{GeyerJardenHenselianClosuresPpC}.
\end{example}

\subsection{Comparison with strong approximation}

We now want to compare our approximation theorems with the well known results for a global field $K$. Beyond the Weak Approximation Theorem \ref{thm:artin_whaples} valid for any field, in global fields we have the following stronger result.

\begin{theorem}[{Strong Approximation, \cite[Chapter II \S15 Theorem]{Cassels}}] \label{thm:strong_approx}
  Let $K$ be a global field and $S\subsetneqq\Sabsval(K)\setminus\{v_{\rm trivial}\}$. 
  %a set of places of $K$ that does not contain all places. 
  For each $v \in S$, let $x_v \in K$ and $\epsilon_v > 0$ be given such that $x_v \in \mathcal{O}_v$ and $\epsilon_v = 1$ for almost all $v \in S\cap\Sval(K)$.
  Then there exists an $x \in K$ with $\lvert x-x_v \rvert_v \leq \epsilon_v$ for all $v \in S$.
\end{theorem}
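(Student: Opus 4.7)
The plan is to prove strong approximation by interpreting it adelically and invoking the self-duality of $\mathbb{A}_K$ over $K$, whose underlying content is the compactness of $\mathbb{A}_K/K$.

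First, I would fix an auxiliary place $v_0 \in \Sabsval(K) \setminus (S \cup \{v_{\mathrm{trivial}}\})$, which exists by the assumption $S \subsetneq \Sabsval(K) \setminus \{v_{\mathrm{trivial}}\}$. Then I would form the restricted product $\mathbb{A}_K^{v_0} = {\prod}'_{v \neq v_0} K_v$ (with respect to the rings $\mathcal{O}_v$ at non-archimedean $v \neq v_0$), which is locally compact. The approximation data, extended by $0$ at every $v \notin S \cup \{v_0\}$, determines an open set $U \subseteq \mathbb{A}_K^{v_0}$; openness in the restricted-product topology uses exactly the hypothesis that $\epsilon_v = 1$ for almost all non-archimedean $v \in S$. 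The problem reduces to showing that the diagonal image of $K$ is dense in $\mathbb{A}_K^{v_0}$.

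The key step is to show that $K + K_{v_0}^* = \mathbb{A}_K$ (topologically), where $K_{v_0}^*$ denotes the subgroup of $\mathbb{A}_K$ supported only at $v_0$; density of $K$ in $\mathbb{A}_K^{v_0} = \mathbb{A}_K / K_{v_0}^*$ then follows by taking the quotient. To prove this, I would fix a nontrivial continuous character $\psi$ of $\mathbb{A}_K$ trivial on $K$ and use the induced self-dual pairing $(a,b) \mapsto \psi(ab)$, under which the annihilator of $K$ is $K$ itself. One computes directly that the annihilator of $K_{v_0}^*$ is the subgroup $\mathbb{A}_K^{v_0}$ of adeles vanishing at $v_0$. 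Hence
$$
(K + K_{v_0}^*)^\perp = K^\perp \cap (K_{v_0}^*)^\perp = K \cap \mathbb{A}_K^{v_0} = \{0\},
$$
the last equality because a global element vanishing at $v_0$ vanishes everywhere. Pontryagin duality then forces $K + K_{v_0}^*$ to be dense in $\mathbb{A}_K$, yielding the desired density of $K$ in $\mathbb{A}_K^{v_0}$ and producing an $x \in K \cap U$ as required.

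The main obstacle, and the only place where the global-field hypothesis enters essentially, is the arithmetic input needed for the self-duality: namely, the compactness of $\mathbb{A}_K / K$, or equivalently that $K^\perp = K$ under the pairing above. Classically this is proved via Minkowski's lattice-point theorem combined with finiteness of the class number and of the unit index in the number field case, and via Riemann--Roch in the function field case. Once this classical arithmetic input is in hand, the duality argument above is essentially formal, in sharp contrast to the field-theoretic approximation theorems proved earlier in this paper, which apply to arbitrary fields but do not on their own yield the integrality constraint at infinitely many places without extra input.
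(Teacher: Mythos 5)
Your argument is correct, but it is worth being clear that it is not the route the paper takes: the paper does not prove Theorem \ref{thm:strong_approx} at all --- it quotes it from \cite[Chapter II \S15]{Cassels} --- and only later re-derives the special case where $S$ contains no complex places and Assumption \ref{assumption:f_and_t} holds, as a corollary of Theorem \ref{thm:main} in Situation $\mathcal{O}$. Your adelic proof is the classical duality argument (in the tradition of Tate/Weil rather than Cassels, whose own proof writes $\mathbb{A}_K=K+W$ for a compact $W$ and then scales by a global element that is large at $v_0$ and small elsewhere): the computation $(K+K_{v_0}^*)^\perp=K^\perp\cap(K_{v_0}^*)^\perp=K\cap\mathbb{A}_K^{v_0}=\{0\}$ is right, and density follows since for any subgroup $H$ of a locally compact abelian group one has $H^{\perp\perp}=\overline{H}$. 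Two small points you should make explicit: (i) the solution set is defined by non-strict inequalities, so you should pass to the nonempty open subset where the finitely many exceptional conditions are strict and where $y_v\in\mathcal{O}_v$ at the remaining places of $S$, which is where the hypothesis ``$x_v\in\mathcal{O}_v$ and $\epsilon_v=1$ for almost all $v$'' enters; (ii) the identification $(K_{v_0}^*)^\perp=\mathbb{A}_K^{v_0}$ needs the local component $\psi_{v_0}$ to be nontrivial, which holds for the standard character but is implicit in asserting that $\psi$ induces a self-duality. As for what each approach buys: your proof gives the full theorem, omitting only a single place, at the cost of the genuinely global inputs ($K$ discrete and cocompact in $\mathbb{A}_K$, i.e.\ $K^\perp=K$); the paper's machinery needs no such input and applies to arbitrary fields, but for a global field it only recovers strong approximation on sets of places satisfying Assumption \ref{assumption:f_and_t}, hence necessarily omitting a positive-density set of places, as the paper itself points out.
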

As usual in algebraic number theory, 
in this situation
a nontrivial element of $\Sabsval(K)$ is called a place.
The condition that $S$ excludes at least one place can clearly not be omitted due to the product formula (e.g.~see \cite[III, (1.3)]{Neu}) -- for instance, there is no element of $K^\times$ which is of norm $\leq 1$ at all places and of norm $<1$ at one of them.

To compare with our theorems, one first has to analyse the topologies on $S$ defined in Section~\ref{sec:balls}. One easily checks that the Zariski topology on $S$ is exactly the cofinite topology; in particular any subset of $S$ is Zariski compact.
On the other hand, the Hochster dual of the Zariski topology is the discrete topology, so $S$ is never compact unless it is finite. (It is important here that we excluded the trivial valuation from $S$.)

This means that while our approximation theorem in Situation $\mathfrak m$ is quite weak for global fields -- we can only approximate on finite sets $S_i$, i.e.\ do not obtain anything stronger than the weak approximation Theorem \ref{thm:artin_whaples} --, we can use our approximation theorem in Situation $\mathcal O$ to recover strong approximation under additional hypotheses.

\begin{proof}[Proof of Theorem \ref{thm:strong_approx} when $S$ contains no complex places and Assumption \ref{assumption:f_and_t} holds]
  For each $v\in S$ let $x_v$ and $\epsilon_v$ be given.
  Write $S_0 \subseteq S$ for the set of finite places $v \in S$ with $x_v = 1$, $\epsilon_v=1$.
  The set $S \setminus S_0$ is finite, so enumerate it as $\{ v_1, \dotsc, v_n \}$, and for each $i$ find a $z_i \in K^\times$ with $\lvert z_i \rvert_{v_i} \leq \epsilon_i$.
  Writing $z_0 = x_0 = 1$ and $S_i = \{ v_i \}$, $x_i = x_{v_i}$ for $i \geq 1$, we apply Theorem \ref{thm:main} in Situation $\mathcal{O}$ to the sets $S_i$ and elements $x_i$, $z_i$ for $i=0, \dotsc, n$.
  The element $x$ thus obtained is as desired.
\end{proof}

By Example \ref{ex:orderings_and_residue_fields}, this proves strong approximation in some situations of sets of places $S$ with density arbitrarily close to $1$, i.e.\ we only have to exclude a set of places of small density. We will see how to lift the prohibition on complex places in Section \ref{sec:exceptional_balls}.
However, we cannot reach the full statement of strong approximation, in which only a single place needs to be omitted, since for any non-constant $f \in K[X]$ and $t \in K^\times$ the Chebotarev Density Theorem 
shows that there is always a positive density of finite places $v$ of $K$ with $v(t) = 0$ in whose residue field the reduction of $f$ has a zero, hence violating Assumption \ref{assumption:f_and_t}. 

\subsection{Kronecker dimension one and reduction to finitely generated fields}

For so-called fields of Kronecker dimension one,
i.e.~algebraic extensions of $\mathbb{Q}$
and algebraic extensions of some rational function field $\mathbb{F}_q(T)$,
most of our approximation results, or some variants of it, are very easy to prove since
they can be reduced to approximation results in finite extensions of $\mathbb{Q}$ respectively $\mathbb{F}_q(T)$. 
For example, one even has the following stronger result:

\begin{proposition}
Let $K$ be a field of Kronecker dimension one, 
$S_0,\dots,S_n\subseteq\Sall(K)$ pairwise disjoint nonempty constructibly closed sets,
$x_1,\dots,x_n\in K$ and $z_1,\dots,z_n\in K^\times$.
Then there exists $x\in K$ with
$$
 x-x_i \in z_i\mathcal{O}_v\mbox{ for all }v\in S_i,\mbox{ for }i=1,\dots,n.
$$
\end{proposition}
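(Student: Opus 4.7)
The plan is to reduce the approximation problem on $K$ to one on a finitely generated global subfield $L \subseteq K$ and then to invoke classical strong approximation (Theorem~\ref{thm:strong_approx}) on $L$. We may assume $n \geq 1$; this automatically forces $K$ to carry some non-trivial locality, since $n+1$ pairwise disjoint nonempty subsets of $\Sall(K)$ cannot exist when $\Sall(K) = \{v_{\mathrm{trivial}}\}$ (which happens precisely when $K$ is algebraic over $\mathbb{F}_q$).

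First I would separate the pairwise disjoint constructibly closed sets $S_0,\dots,S_n$ by pairwise disjoint constructibly clopen sets $C_0,\dots,C_n$, which is possible because $\Sall(K)$ with $\mathcal{T}_{\mathrm{con}}$ is a closed subspace of the Stone space $2^K$ and hence zero-dimensional compact Hausdorff. Each $C_i$ is a finite Boolean combination of basic clopens $\{v : x \in \mathcal{O}_v\}$ and $\{v : x \in \mathfrak{m}_v\}$ with $x$ ranging over a finite set $F_i \subseteq K$. I would then let $L$ be the subfield of $K$ generated over the prime field by $F := \bigcup_i F_i \cup \{x_1,\dots,x_n,z_1,\dots,z_n\}$. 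By the Kronecker dimension one hypothesis together with the presence of non-trivial localities on $K$ (which guarantees that $F$ contains some element transcendental over the prime field in characteristic $p$), $L$ is a global field.

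Next I consider the restriction map $\rho \colon \Sall(K) \to \Sall(L)$; since $K/L$ is algebraic, $\rho$ preserves non-triviality. Setting $D_i := \rho(S_i)$, the $D_i$ will be pairwise disjoint: if $w \in D_i \cap D_j$ with $w = \rho(v) = \rho(v')$, $v \in S_i \subseteq C_i$ and $v' \in S_j \subseteq C_j$, then the conditions defining $C_i$ involve only elements of $L$ and depend only on the restriction to $L$, forcing $v' \in C_i \cap C_j = \emptyset$. I then apply Theorem~\ref{thm:strong_approx} to $L$ with $S := D_1 \cup \dots \cup D_n$ (with the trivial locality removed if present), assigning $x_w := x_i$ and $\epsilon_w := \lvert z_i \rvert_w$ for $w \in D_i$; the required admissibility conditions at almost all non-archimedean places hold automatically because $x_i, z_i \in L$. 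The resulting $x \in L$ satisfies $\lvert x - x_i \rvert_w \leq \lvert z_i \rvert_w$ for all $w \in D_i$, which for any $v \in S_i$ translates to $x - x_i \in z_i \mathcal{O}_w \subseteq z_i \mathcal{O}_v$.

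The main obstacle is ensuring that $S$ is a \emph{proper} subset of $\Sabsval(L) \setminus \{v_{\mathrm{trivial}}\}$, i.e.\ that some non-trivial place of $L$ lies outside $S$. If $S_0$ contains a non-trivial locality $v^*$, its restriction $\rho(v^*)$ is such a place by disjointness of the $D_i$. Otherwise $S_0 = \{v_{\mathrm{trivial}}\}$, and I would exploit that $\{v_{\mathrm{trivial}}\}$ is not constructibly open in $\Sall(K)$ whenever $K$ admits any non-trivial locality: since $S_1 \cup \dots \cup S_n$ is constructibly closed, it cannot exhaust $\Sall(K) \setminus \{v_{\mathrm{trivial}}\}$, so some non-trivial $v^* \in \Sall(K) \setminus (S_0 \cup \dots \cup S_n)$ exists, and enlarging $F$ by finitely many elements defining a clopen separating $v^*$ from $S_1 \cup \dots \cup S_n$ guarantees that $\rho(v^*) \in \Sall(L)$ is a non-trivial place lying outside $S$.
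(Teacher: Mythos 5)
Your proposal is correct and follows essentially the same route as the paper's own proof: separate the sets by constructible clopens, restrict everything to the global subfield generated by the finitely many relevant elements, and invoke strong approximation there, using the spare set $S_0$ to guarantee that at least one place is omitted (you even treat the edge case $S_0=\{v_{\mathrm{trivial}}\}$ more explicitly than the paper does). The only slip is the parenthetical claim that the presence of a nontrivial locality forces $F$ to contain an element transcendental over $\mathbb{F}_p$ --- it need not, but one simply enlarges $F$ by a transcendental element of $K$ in that case, which is what the paper's ``without loss of generality'' also amounts to.
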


\begin{proof}
As $\Sall(K)$ 
is the inverse limit of $\Sall(K_0)$ for the finitely generated
subfields $K_0$ of $K$, and $\Sabsval(K_0)\setminus\Sval(K_0)$ is finite 
for all of these,
the space $\Sall(K)$ is compact Hausdorff and therefore normal.
We can thus assume without loss of generality that the $S_i$
are open-closed (and still pairwise disjoint),
i.e.~of the form
$$
 S_i=\{v\in\Sall(K):a_{i1},\dots,a_{ik_i}\in\mathcal{O}_v,b_{i1},\dots,b_{il_i}\in\mathfrak{m}_v\}
$$
with elements $a_{ij},b_{ij}\in K$.
The subfield $K_0$ of $K$ generated by all $a_{ij},b_{ij},x_i,z_i$
is then without loss of generality a global field,
and
$$
 S_i':=\{v\in\Sall(K_0):a_{i1},\dots,a_{ik_i}\in\mathcal{O}_v,b_{i1},\dots,b_{il_i}\in\mathfrak{m}_v\}
$$
consists exactly of the restrictions of the elements of $S_i$ to $K_0$.
In particular, $S_0',\dots,S_n'$ are again 
nonempty and pairwise disjoint.
Now let 
$$
 T_i'=\{v\in S_{i}':x_i\in\mathcal{O}_v,z_i^{-1}\in\mathcal{O}_v\}.
$$ 
Then $S_i'\setminus T_i'$ is finite for every $i$,
so by Theorem \ref{thm:strong_approx}
there exists $x\in K_0$
such that
$x-x_i\in z_i\mathcal{O}_v$ for every $v\in S_i'\setminus T_i'$ 
and $x\in\mathcal{O}_v$ for every $v\in T_i'$,
for $i=1,\dots,n$.
This $x$ then satisfies $x-x_i\in z_i\mathcal{O}_v$ for every $v\in S_i$ and every $i=1,\dots,n$.
\end{proof}

Note that this proposition needs neither a $\condU$ condition,
since the set of localities on the global field $K_0$ 
has the property that every $a\in K_0$ lies in $\mathcal{O}_v^\times$ for almost all $v\in\Sall(K_0)$,
nor an $\condI$ condition, since localities on a global field
are automatically pairwise independent.

This might suggest that one can possibly reduce Theorem \ref{thm:main}
to the special case where the field is finitely generated over its prime field, by replacing the general field $K$ by a suitable finitely generated subfield $K_0$. 
This however does not seem to be of much use, 
mainly since in general neither of the two properties of global fields 
named in the previous paragraph holds for $K_0$;
in fact, even if a set $S\subseteq\Sall(K)$ satisfies some independence or compatibility condition,
the restriction of $S$ to $K_0$ need not.

\subsection{The $\condU$ condition}

We have seen in Remarks \ref{rem:condition_I} and \ref{rem:zariski_value_approx_fails} that in our theorems the conditions $\condI$ and $\condT$ cannot be dropped or significantly weakened. It remains to justify condition $\condU$, i.e.\ essentially Assumption \ref{assumption:f_and_t}, which is technical and may appear unnatural. 

We have seen above that the product formula alone may necessitate the omission of some place. The following example justifies our stronger assumption. We focus on a situation with only valuations, so that only conditions \ref{assumption:f_and_t}(\ref{condIntegral}, \ref{condNoApproxZero}, \ref{condLeadingTerm}) play a role.
\begin{example}\label{ex:justifying_U}\label{U_needed}
    Let $P$ be a set of prime numbers such that for every number field $L$ there exists a $p \in P$ (or equivalently infinitely many $p \in P$) such that the prime ideal $(p)$ is completely split in $L/\mathbb{Q}$. We may of course simply take $P$ to be the set of all prime numbers.

  We now consider $K = \mathbb{Q}(T)$; we will construct two Zariski compact sets $S_1, S_2$ of valuations on $K$, both only consisting of $p$-valuations (of $p$-rank $1$) for some $p \in P$, such that a certain approximation problem has no solution.
  We let $S_2$ consist of a single valuation, namely the refinement of the degree valuation on $K$ by the $q$-adic valuation on $\mathbb{Q}$ for some fixed $q \in P$.
  To construct $S_1$, first fix an enumeration $f_1, f_2, \dotsc$ of the irreducible monic polynomials in $\mathbb{Q}[T]$ and an enumeration $x_1, x_2, \dotsc$ of the non-zero elements of $K$. To each $f_i$ there is an associated discrete valuation $v_{f_i}$ on $K$, trivial on $\mathbb{Q}$.

  For each $i$, construct a valuation on $K$ in the following way. The residue field of $v_{f_i}$ is a finite extension of $\mathbb{Q}$, so by assumption on $P$ it carries a $p$-valuation $v_p$ of $p$-rank $1$ for some $p \in P$.
  We may even choose $p$ such that for all $x_j$, $1 \leq j \leq i$, with $v_{f_i}(x_j)=0$ we have $(v_p \circ v_{f_i})(x_j) = 0$,
  as this latter condition only excludes finitely many $p$.
  We let $v_i = v_p \circ v_{f_i}$, and take $S_1 = \{ v_i \colon i \geq 1 \}$.

  We claim that $S_1$, as a subspace of $\Sval(K)$ with the Zariski topology, carries the cofinite topology. To see this, observe that for every $x_j \in K^\times$, we have $v_i(x_j) = 0$ unless either $i < j$ or $v_{f_i}(x_j) \neq 0$, each of which only happens for finitely many $i$.
  Hence every nonempty Zariski-open set is cofinite, and in particular $S_1$ is compact. Since the valuations in $S_1$ are pairwise incomparable, the topology is ${\rm T}_1$ (Remark \ref{rem:T1}) and therefore the Zariski topology is exactly the cofinite topology.

  Consider now the following approximation problem: We demand an $x \in K$ such that $v(x - 0) \geq v(1)$ for all $v \in S_1$, and $w(x-T^{-1}) \geq w(T^{-2})$ for $w \in S_2$.
  Such an $x$ would be integral at all $v_{f_i}$ and furthermore integral at the degree valuation, hence necessarily constant. However, then $w(x-T^{-1}) \geq w(T^{-2})$ would be violated. Therefore this approximation problem is not solvable, in spite of $S_1$ and $S_2$ being compact sets of valuations, any two of which are independent.
\end{example}

Note that if $P$ did not satisfy our initial condition, i.e.\ if there exists a number field $L/\mathbb{Q}$ such that for no $p \in P$ is the ideal $(p)$ completely split in $L/\mathbb{Q}$, then we may as well enlarge $L$ to a totally imaginary Galois extension of $\mathbb{Q}$;
in this situation, Example \ref{ex:orderings_and_residue_fields} (where we choose $g$ to be the minimal polynomial of an integral primitive element of $L/\mathbb{Q}$) shows that Assumption \ref{assumption:f_and_t} applies with $t \in \mathbb{Q}^\times$ (or even $t=1$ since we are not interested in orderings), and therefore Theorem \ref{thm:main} is applicable to sets $S$ consisting only of valuations with residue field $\mathbb{F}_p$ for some $p \in P$ unramified in $L/\mathbb{Q}$.
Hence Example \ref{ex:justifying_U} shows that condition \ref{thm:main}\condU{} cannot be substantially weakened in Situation $\mathcal{O}$.

\subsection{Affine families of valuations}
\label{sec:Ershov}

We now briefly discuss the relation between our results
and the approximation results in the work of Ershov,
e.g.\
\cite{ErshovRRCF,ErshovMultiValuedFields,ErshovTehran}.
One of the most general results Ershov obtains is the following,
which we have paraphrased.

\begin{theorem}[{see \cite[Proposition 2.6.2.]{ErshovMultiValuedFields}}]\label{thm:Ershov.O}
  Let $S_1,\dots,S_n\subseteq\Sval(K)\setminus\{v_{\rm trivial}\}$ pairwise disjoint,
  and let $x_{1},\dots,x_{n}\in K$ and $z_{1},\dots,z_{n}\in K^{\times}$.
  Write $S = S_1 \cup\dotsb\cup S_n$.
\begin{enumerate}
    \item[$\condU$] Assume that $R = \bigcap_{v \in S} \mathcal{O}_v$ is a Prüfer ring with quotient field $K$.\footnote{In \cite[Proposition 2.6.2.]{ErshovMultiValuedFields}, the condition is that $S$ is {\em affine}, which he shows to be the case iff $S$ satisfies $\condU$, is compact in the Zariski topology and consists of pairwise incomparable valuations, see \cite[Proposition 2.3.4, Corollary 2.3.2]{ErshovMultiValuedFields}. The latter two are implied by $\condT$ and $\condI$.}
    \item[$\condT$] Assume that each $S_i$ is compact in the Zariski topology.
    \item[$\condI$] Assume that 
    the elements of $S$ are pairwise independent.
%    each $v_i\in S_i$ is independent from each $v_j\in S_j$, for $j\neq i$.
\end{enumerate}
Then there exists $x\in K$ with
$$
v(x-x_i)\geq v(z_i)\mbox{ for all }v\in S_i,\mbox{  for }i=1,\dots,n.
$$
\end{theorem}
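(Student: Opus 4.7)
The strategy is to reduce to Theorem~\ref{thm:main} in Situation~$\mathcal{O}$ with $t=1$, verifying each hypothesis in turn. The main work will lie in replacing Assumption~\ref{assumption:f_and_t} by a direct construction leveraging the Prüfer hypothesis, in the spirit of Remark~\ref{rem:alternative.phi}.

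First I handle conditions $\condT$ and $\condI$ of Theorem~\ref{thm:main}. The condition $\condT$ there, in Situation~$\mathcal{O}$, is exactly Zariski compactness of each $S_i$, which is assumed. For $\condI$, pairwise independence of $S$ gives pairwise $1$-independence (trivially, since two valuations are always $1$-independent) and pairwise incomparability. The compatibility condition $w(x_i-x_j)\geq w(z_i)=w(z_j)$ is then vacuous: any valuation $w$ with a refinement in $S_i$ and a refinement in $S_j$ is a common coarsening of two independent valuations, hence $w$ is trivial, and the trivial valuation assigns value $0$ to every nonzero element.

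Second I address condition $\condU$. Assumption~\ref{assumption:f_and_t} is not directly given, but as explained in Remark~\ref{rem:alternative.phi}, this assumption enters the proof of Theorem~\ref{thm:main} (through Lemmas~\ref{lem:pairwise_indicator_functions} and~\ref{lem:induction}) only via the function $\phi$ of Proposition~\ref{prop:phiminmax}. My plan is to construct such a $\phi$ directly from the Prüfer structure: by pairwise independence, $\mathrm{Max}(R)$ is in bijection with $S$ via $v\mapsto\mathfrak{m}_v\cap R$, with $R_{\mathfrak{m}_v\cap R}=\mathcal{O}_v$; and for any $x,y\in K^\times$, the fractional ideal $xR+yR$ is invertible in $R$, hence locally principal, with local generator at each $v\in S$ having valuation $\min(v(x),v(y))$. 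One then either extracts a global generator (when available) to serve as $\phi(x,y)$, or, when this fails, uses Zariski compactness of each $S_i$ to cover it by finitely many Zariski-opens on which appropriate local generators are defined, and assembles these into a collection that can be manipulated as in the proofs of Lemmas~\ref{lem:pairwise_indicator_functions}--\ref{lem:induction}. Once a substitute for $\phi$ satisfying Proposition~\ref{prop:phiminmax} is in hand, the inductive argument of Theorem~\ref{thm:main} (Situation~$\mathcal{O}$) yields the conclusion verbatim.

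The main obstacle will be producing a single global $\phi(x,y)\in K^\times$ valid simultaneously for all $v\in S$: although $xR+yR$ is invertible, it need not be principal when the Picard group of $R$ is nontrivial. Two ways to circumvent this suggest themselves: (a) exploit Zariski compactness to reduce ``combine'' steps to finite subsets of $S$, where the semi-localization of $R$ is a semi-local Prüfer domain and hence Bézout, so that the ideals in question become principal; or (b) reorganize the inductive proof so that, at each step, one works with local generators on an open cover rather than with a single global $\phi$. Either route requires careful bookkeeping, but the underlying algebraic input — invertibility of finitely generated ideals over a Prüfer domain and the identification $\mathrm{Max}(R)\leftrightarrow S$ via independence — is exactly what is supplied by hypotheses $\condU$ and $\condI$ of the theorem.
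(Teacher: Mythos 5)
Your verification of conditions \condT{} and \condI{} of Theorem \ref{thm:main} is correct, but the central step of your plan --- manufacturing a substitute for the function $\phi$ of Proposition \ref{prop:phiminmax} from the Pr\"ufer hypothesis alone --- has a genuine gap, and it is precisely the gap the paper itself flags. The paper does not prove Theorem \ref{thm:Ershov.O}; it cites Ershov, and it states explicitly (end of Section \ref{sec:Ershov}, together with Remark \ref{rem:alternative.phi}) that its own machinery recovers the result only under the \emph{stronger} hypothesis that $R$ is B\'ezout, or more generally that some fixed power of every two-generated fractional ideal of $R$ is principal. Pr\"ufer gives invertibility of $xR+yR$, not principality, and you correctly identify the Picard-group obstruction --- but neither of your proposed workarounds removes it. Workaround (a) fails because semi-localizing at finitely many maximal ideals produces an element with the prescribed valuations only at those finitely many places, whereas the combination steps in Lemma \ref{lem:pairwise_indicator_functions} (the passages to $(2')$, $(1'')$, $(2'')$) require the single combined element $b_v$, resp.\ $b$, to satisfy conditions simultaneously at \emph{every} element of the infinite sets $S_1$ and $S_2$; control at a finite subset is exactly what the compactness step already provides, and the whole point of $\phi$ is to convert that finite data into one global element. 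Workaround (b) merely postpones the same gluing problem: the conclusion of the theorem is a single $x\in K$, so at some stage local generators on a cover must be assembled into a global element, and that is where nontrivial $\mathrm{Pic}(R)$ can obstruct. (Remark \ref{rem:zariski_value_approx_fails} shows concretely how the class group obstructs the analogous value-approximation statement for Zariski-compact sets.)

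To make your argument complete you would either have to strengthen \condU{} as in Remark \ref{rem:alternative.phi} --- in which case your reduction to Theorem \ref{thm:main} works and reproduces what the paper says --- or switch to Ershov's actual route, which is ideal-theoretic rather than based on a min-function: one encodes the conditions $v(x-x_i)\geq v(z_i)$ for $v\in S_i$ as congruences modulo suitable $R$-submodules of $K$ attached to the compact sets $S_i$ (using the theory of affine families) and invokes the Chinese Remainder Theorem, which holds for arbitrary ideals in arithmetical rings such as Pr\"ufer domains. That is a genuinely different decomposition of the problem from the one in Sections \ref{sec:constructible}--\ref{sec:mainthm}, and it is the one the citation to \cite[Proposition 2.6.2]{ErshovMultiValuedFields} points to.
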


Condition \ref{thm:Ershov.O}(U)
is satisfied for example
in the situation of 
Example \ref{ex:V}(2),
see \cite[Proposition 3]{ErshovRRCF}.
In particular, 
Theorem \ref{thm:intro_Ershov_uniformizer} follows from 
Theorem \ref{thm:Ershov.O} applied to
${\rm S}_{\pi}^{1}(K)$.
Condition \ref{thm:Ershov.O}(U) is also satisfied
in the situation of
Example \ref{ex:V}(1),
see \cite[Proposition 2.3.3]{ErshovMultiValuedFields} or \cite[Theorem 1]{RoquettePrincIdealThms}.

On the other hand, Theorem \ref{thm:Ershov.O} 
can be proven by our methods
under the stronger assumption that $R$
is not only Prüfer but satisfies the condition
explained in Remark \ref{rem:alternative.phi}.

\section{Adding finitely many exceptional localities}
\label{sec:exceptional_balls}

The standing Assumption \ref{assumption:f_and_t} on a set $S$ of localities and an element $t \in K^\times$ requires a polynomial $f \in K[X]$ such that $f(x) \not\in t\mathfrak{m}_v$ for all $x \in K$ and all $v \in S$.
If $v$ is a rank-$1$ valuation, this means in particular that $f$ has no zero in the completion of $K$ with respect to $v$, so we cannot hope to cover rank-$1$ valuations with algebraically closed completion.
For the same reason, our method as is cannot cover absolute values with completion $\mathbb{C}$, although this is desirable for analogy with Theorem \ref{thm:artin_whaples}.
It turns out, however, that at least finitely many such exceptional localities can be added to our main theorem.

The following lemma is a variant of Proposition \ref{prop:RibenboimLocalities}.
\begin{lemma}\label{lemma:finitary_weak_value_approx}
  Let $v_0, v_1, \dotsc, v_n \in \Sall(K)$ and $z_0, \dotsc, z_n \in K^\times$ such that for any valuation $w$ coarsening $v_0$ and some $v_i$, $i>0$, we have $w(z_0) \geq w(z_i)$.
  \begin{enumerate}
  \item If every $v_i$ with $i>0$ is strongly incomparable to $v_0$, then there exists $z \in K^\times$ with $z \in z_i \mathfrak{m}_{v_i}$ for all $i>0$ and $z^{-1} \in z_0^{-1} \mathfrak{m}_{v_0}$.
  \item If every $v_i$ with $i>0$ is either strongly incomparable to or a proper refinement of $v_0$, then there exists $z \in K^\times$ with $z \in z_i \mathfrak{m}_{v_i}$ for all $i>0$ and $z^{-1} \in z_0^{-1} \mathcal{O}_{v_0}$.
  \end{enumerate}
\end{lemma}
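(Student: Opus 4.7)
I would argue by induction on $n$, handling (1) and (2) in parallel.

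For the base case $n=1$, our hypothesis provides only $w(z_0)\ge w(z_1)$ for $w=v_0\vee v_1$, whereas Proposition \ref{prop:RibenboimLocalities} requires equality $z_0\mathcal{O}_w=z_1\mathcal{O}_w$. I would close this gap by replacing $z_1$ by $z_1 c$, where $c\in\mathcal{O}_{v_1}$ is chosen so that $w(z_1c)=w(z_0)$; such $c$ exists because $w$ is a coarsening of $v_1$ and the non-negative value $w(z_0)-w(z_1)\in\Gamma_w$ admits a non-negative lift in $\Gamma_{v_1}$. Since $c\in\mathcal{O}_{v_1}$ gives $z_1 c\,\mathfrak{m}_{v_1}\subseteq z_1\mathfrak{m}_{v_1}$, invoking Proposition \ref{prop:RibenboimLocalities}(1) for case (1) (or case (2) in the strongly incomparable subcase) yields the conclusion. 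For case (2) when $v_1$ is a proper refinement of $v_0$ (forcing $v_0$ to be a valuation), I would construct $z=z_1 m$ directly, with $m\in K^\times$ satisfying $v_1(m)>0$ but $v_0(m)=0$; such $m$ exists because the convex subgroup of $\Gamma_{v_1}$ corresponding to the coarsening $v_0$ is non-trivial and therefore contains positive elements.

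For the inductive step, I would first apply the inductive hypothesis to the reduced data $(v_0,v_1,\dots,v_{n-1};z_0,\dots,z_{n-1})$, producing $z'\in K^\times$ satisfying the conclusion restricted to the first $n-1$ of the $v_i$ (with the full strength of part (1) or (2), as appropriate). The plan is then to set $z:=z'\cdot u$ for a carefully chosen correction $u\in K^\times$ that is close enough to $1$ at each $v_i$ with $i\in\{0,1,\dots,n-1\}$ (e.g.\ $u\in 1+r_i\mathfrak{m}_{v_i}$ for a threshold $r_i\in K^\times$ sufficiently small that the inequalities witnessed by $z'$ are preserved when multiplied by $u$), and with $v_n(u)>v_n(z_n/z')$ at $v_n$ so that the missing condition $z\in z_n\mathfrak{m}_{v_n}$ is attained.

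The main obstacle is the construction of $u$, which is itself an asymmetric approximation problem: $u$ must be close to $1$ at several localities while being small at one more. Because $v_n$ is only assumed strongly incomparable to $v_0$ and not to the other $v_i$, one cannot simply invoke Theorem \ref{thm:pairwise_indep_fin_approx}. I expect to construct $u$ in two steps: first, a further application of Proposition \ref{prop:RibenboimLocalities} to the pair $(v_0,v_n)$ --- legitimate by the same base-case trick applied to the hypothesis $w_n(z_0)\ge w_n(z_n)$ --- which provides an initial candidate with the correct asymptotic behaviour at $v_0$ and $v_n$; and second, a further multiplicative modification enforcing closeness to $1$ at $v_1,\dots,v_{n-1}$ while not spoiling the behaviour at $v_n$. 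I anticipate that this last adjustment will be the delicate point, requiring careful tracking of the common coarsenings of $v_n$ with each $v_j$ through $v_0$, and exploiting the fact that the coarsenings of $v_0$ form a totally ordered chain so that all relevant compatibility conditions are ultimately governed by the single chain of hypotheses involving $v_0$. The weakened conclusion in case (2) corresponds exactly to which part of Proposition \ref{prop:RibenboimLocalities} is invoked at each stage, so no additional work is required to handle the two cases uniformly.
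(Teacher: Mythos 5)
Your base case is fine: renormalising $z_1$ so that $z_1\mathcal{O}_w=z_0\mathcal{O}_w$ at $w=v_0\vee v_1$ and then invoking Proposition \ref{prop:RibenboimLocalities} is legitimate, and your direct construction for a proper refinement in case (2) also works. The genuine gap is in the inductive step: the correction factor $u$ (close to $1$ at $v_0,\dots,v_{n-1}$, small at $v_n$) is not constructed, and the two-step plan you sketch does not reduce the difficulty --- it is again an asymmetric approximation problem involving all $n+1$ localities with no independence assumptions among $v_1,\dots,v_n$. Worse, your peeling-off strategy can genuinely fail. The original ``all small'' conditions $z\in z_i\mathfrak{m}_{v_i}$ never conflict with each other at a common coarsening (they are all lower bounds), which is why the lemma needs no compatibility hypothesis between $z_i$ and $z_j$ for $i,j>0$. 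But your reduction converts the conditions at $v_1,\dots,v_{n-1}$ into ``close to $1$'' conditions on $u$, and those \emph{do} conflict with the smallness condition at $v_n$: if $w'=v_j\vee v_n$ is nontrivial (e.g.\ $v_n$ comparable to or dependent with some $v_j$, $j<n$, which the hypotheses allow), then $u\in 1+r_j\mathfrak{m}_{v_j}$ with $w'(r_j)\geq 0$ forces $w'(u)=0$, whereas $v_n(z'u)>v_n(z_n)$ forces $w'(u)\geq w'(z_n)-w'(z')$, which is positive whenever $w'(z')<w'(z_n)$. Since $z'$ was chosen in the inductive step with no reference to $v_n$ or $z_n$, nothing prevents this, so for some valid choices of $z'$ no admissible $u$ exists. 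The $v_n$-condition has to be built in from the start rather than patched in afterwards.

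The paper's proof is organised precisely to avoid this: after normalising $z_0=1$ and replacing any $z_i$ with $w(z_i)<0$ by $1$, it partitions $\{v_0,\dots,v_n\}$ into dependence classes, reduces the case where all localities are dependent to the induced problem over the residue field of their common coarsening (induction on a different parameter), solves the sub-problem for the class of $v_0$ to get a $v_0$-open set of solutions, similarly gets a $v_i$-open set of admissible values for each class not containing $v_0$, and only then combines the finitely many pairwise \emph{independent} classes via Theorem \ref{thm:pairwise_indep_fin_approx}. Part (2) is handled not by weakening which part of Proposition \ref{prop:RibenboimLocalities} is used, but by replacing $v_0$ with a proper refinement $v_0'$ satisfying $v_0'\vee v_i=v_0$ for every $v_i$ refining $v_0$, thereby reducing (2) to (1). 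If you want to salvage your induction, you would need to induct on the number of dependence classes rather than on $n$, which essentially reproduces the paper's argument.
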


\begin{proof}
  Note that we may freely scale the $z_i$ by a common factor, so assume without loss of generality that $z_0=1$.
  
  Consider (1).
  Assume first that $v_0, \dotsc, v_n$ are dependent, say with nontrivial finest common coarsening $w$. Any $z_i$ with $w(z_i) < 0$ may be replaced by $1$, since this only strengthens the conclusion.
  Hence we have $w(z_i) = 0$ for all $i$, and we may reduce to a problem in the residue field $Kw$,
  on which $v_0,\dots,v_n$ induce localities $\bar{v}_0,\dots,\bar{v}_n$:
  Any lift $z \in K^\times$ of $\overline z \in (Kw)^\times$ satisfying $\overline z \in \overline{z}_i \mathfrak{m}_{\bar{v}_i}$ for all $i>0$ and $\overline{z}^{-1} \in \overline{z_0}^{-1} \mathfrak{m}_{\bar{v}_0}$ is as desired.
  Hence we have reduced to a problem in the residue field, where the $\bar{v_0}, \dotsc, \bar{v_n}$ are not all dependent.
  Therefore let us assume henceforth that $v_0, \dotsc, v_n$ are not all dependent.
  
  Inductively, we may first solve the problem restricted to all $v_i$ which are dependent with $v_0$, so let $z' \in K^\times$ with ${z'}^{-1} \in z_0^{-1} \mathfrak{m}_{v_0}$ and $z' \in z_i \mathfrak{m}_{v_i}$ for all $v_i$ dependent with $v_0$. Note that these conditions are then also satisfied in a $v_0$-neighbourhood of $z'$.
  Likewise, for any $v_i$ independent from $v_0$ there is a $v_i$-open set of $z'_i$ such that $z'_i \in z_j\mathfrak{m}_{v_j}$ for all $v_j$ dependent with $v_i$.
  Then Theorem \ref{thm:pairwise_indep_fin_approx} gives $z$ as desired.

  For (2), if $v_0$ is strongly incomparable to every other $v_i$, then we may solve the stronger problem (1), so assume this is not the case, i.e.\ some $v_i$ properly refines $v_0$. In particular $v_0$ is a valuation.
  Since the residue field of $v_0$ carries a nontrivial locality, it carries infinitely many pairwise independent valuations, so we may pick a refinement $v_0'$ of $v_0$ with $v_0' \vee v_i = v_0$ for any of the $v_i$ refining $v_0$. 
Note that any $w$ coarsening $v_0'$ and some $v_i$
will also coarsen $v_0=v_0'\vee v_i$, 
and hence satisfies $w(z_0)\geq w(z_i)$.  
  We can then solve problem (1) for $v_0', v_1, \dotsc, v_n$, and any solution thereof is as desired.
\end{proof}

The following results are again to be understood in the two situations $\mathcal{O}$ and $\mathfrak{m}$ as before.
This first proposition is an extension of Theorem \ref{thm:Ribenboim} to include orderings and complex absolute values.

\begin{proposition}\label{prop:finitary_approximation}
  Let $n>1$, $v_1, \dots, v_n \in \Sall(K)$, $x_1, \dotsc, x_n \in K$ and $z_1, \dotsc, z_n \in K^\times$.
  Assume that whenever $w$ coarsens $v_i$ and $v_j$, $i \neq j$, then $x_i - x_j \in z_i \mathcal{O}_w = z_j \mathcal{O}_w$.
  Furthermore assume in Situation $\mathfrak{m}$ that for any $i, j$ with $x_i \neq x_j$ the localities $v_i$ and $v_j$ are strongly incomparable; in Situation $\mathcal{O}$, assume the same only when $v_i$ and $v_j$ are orderings.
Then there exists $x \in K$ with $x-x_i \in z_i \BB_{v_i}$ for all $i$.
\end{proposition}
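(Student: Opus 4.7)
The plan is to proceed by induction on $n$, with the case $n = 1$ serving as the trivial base (take $x = x_1$). In the inductive step I would apply the induction hypothesis to $v_2, \ldots, v_n$, noting that both the compatibility and strong-incomparability conditions are preserved under restriction, obtaining $x'' \in K$ with $x'' - x_i \in z_i\BB_{v_i}$ for $i = 2, \ldots, n$. If $x'' = x_1$ one takes $x = x''$, so the remaining case is $c := x'' - x_1 \in K^\times$.

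I would then seek $x$ in the form $x = x'' - cy$ for a suitable $y \in K^\times$, giving $x - x_1 = c(1-y)$ and $x - x_i = (x'' - x_i) + (x_1 - x_i)y$ for $i \geq 2$. It thus suffices to find $y$ satisfying: $c(1-y) \in z_1\BB_{v_1}$; for $i \geq 2$ with $x_i \neq x_1$, $(x_1 - x_i)y \in z_i\BB_{v_i}$; and for $i \geq 2$ with $x_i = x_1$, $y \in \mathfrak{m}_{v_i}$. The last condition suffices because in that case $c = x'' - x_i \in z_i\BB_{v_i}$ and $z_i\BB_{v_i}$ is stable under multiplication by $\mathcal{O}_{v_i}$.

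The element $y$ would be constructed as $y = (1+\tilde y)^{-1}$, where $\tilde y \in K^\times$ is produced by Lemma~\ref{lemma:finitary_weak_value_approx}: I would ask for $\tilde y$ small at $v_1$ (so that $y \approx 1$ there) and large at $v_i$ for each $i \geq 2$ (so that $y \approx 0$ there), with quantitative bounds derived from $z_1/c$ and $z_i/(x_1 - x_i)$. In Situation $\mathfrak m$, the indices $i$ with $x_i \neq x_1$ satisfy the strong-incomparability hypothesis, meeting the requirement of Lemma~\ref{lemma:finitary_weak_value_approx}(1); the compatibility of the scaled bounds required by that lemma follows from the identity $x_1 - x_i = -c + (x'' - x_i)$ together with the original compatibility hypothesis on the $x_j$ and $z_j$.

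The hard part will be the case $x_i = x_1$ with $i \geq 2$, where strong incomparability is not assumed and part (1) of Lemma~\ref{lemma:finitary_weak_value_approx} cannot be invoked directly. Here I would exploit the weaker requirement $y \in \mathfrak{m}_{v_i}$: Lemma~\ref{lemma:finitary_weak_value_approx}(2) covers those $v_i$ which properly refine $v_1$, and the remaining dependent configurations can be handled by a preliminary reduction to the residue field of $v_1 \vee v_i$ combined with Theorem~\ref{thm:pairwise_indep_fin_approx}. Complex absolute values cause no additional difficulty, as they are incomparable to every other nontrivial locality; in Situation $\mathcal O$ the relaxed strong-incomparability requirement (needed only between pairs of orderings) allows an analogous construction, making fuller use of part (2) of the lemma for comparable pairs.
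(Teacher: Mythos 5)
Your strategy---peeling off one locality per induction step and gluing the two partial solutions with a ``partition of unity'' element $y$ that is close to $1$ at $v_1$ and close to $0$ at $v_2,\dots,v_n$---is genuinely different from the paper's proof, which first removes coarsenings in Situation $\mathcal{O}$ to reduce to Situation $\mathfrak{m}$, and then organizes the induction by dependence classes: pairwise independent localities are handled by Theorem \ref{thm:pairwise_indep_fin_approx}; an all-dependent family is pushed down to the residue field of the common coarsening (where strong incomparability guarantees that localities inducing the same residue locality carry the same condition, so the problem genuinely shrinks); and in the mixed case a maximal dependence class is collapsed into a single condition at its common coarsening. Your route has two genuine gaps. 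First, additivity: the correct identity is $x-x_i=(x''-x_i)(1-y)+(x_1-x_i)y$ (your version drops the factor $1-y$), and even with both summands in $z_i\BB_{v_i}$ you cannot conclude $x-x_i\in z_i\BB_{v_i}$ when $v_i$ is an ordering, since $z\mathfrak{m}_v+z\mathfrak{m}_v\not\subseteq z\mathfrak{m}_v$ there. In Theorem \ref{thm:main} exactly this is repaired by the factor $t$ and condition \ref{assumption:f_and_t}(\ref{condPlus}), but the present proposition has no $t$ and no Assumption \ref{assumption:f_and_t}, so that repair is unavailable; one would need an extra convexity argument (arranging $0\leq_v y\leq_v 1$) which you do not make.

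Second, the case you yourself flag as hard is where the construction actually breaks rather than merely requiring more work. If $x_i=x_1$ and $v_i$ is dependent with $v_1$---for instance a proper refinement of $v_1$, which the hypotheses allow---then the requirements you place on the single element $y$ are contradictory: $y-1\in\mathfrak{m}_{v_1}\subseteq\mathfrak{m}_{v_i}$ forces $y\in1+\mathfrak{m}_{v_i}\subseteq\mathcal{O}_{v_i}^\times$, so $y\notin\mathfrak{m}_{v_i}$. The weaker requirement $y\in\mathcal{O}_{v_i}$ (which is what your own justification actually establishes as sufficient) avoids this particular clash, but then Lemma \ref{lemma:finitary_weak_value_approx} no longer delivers what you need: both of its parts put the distinguished locality $v_0=v_1$ on the ``large'' side and require every other $v_i$ to be strongly incomparable to, or a proper refinement of, $v_1$---neither of which holds for a proper coarsening of $v_1$ or for an ordering not strongly incomparable to $v_1$, both of which are permitted when $x_i=x_1$. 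Moreover, a ``preliminary reduction to the residue field of $v_1\vee v_i$'' cannot be carried out separately for each problematic index and then recombined, because all the constraints must be met by one and the same $y$ (and recombination would again run into the additivity problem above). Making this work would in effect force you into the paper's dependence-class decomposition; as it stands the proposal is not a proof.
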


\begin{proof}
  In Situation $\mathcal{O}$, if $v_i$ refines $v_j$, then $x_i + z_i \BB_{v_i} \subseteq x_j + z_j \BB_{v_j}$, so we may simply remove $v_j$ from the list. We can repeat this until the $v_i$ are pairwise incomparable. If only one $v_i$ remains, the problem has become trivial, otherwise all conditions for the stronger result in Situation $\mathfrak{m}$ are satisfied, so it suffices to consider this situation.
  
  We use induction on $n$ (where $K$ varies across the class of all fields).
  If the $v_i$ are pairwise independent the statement follows from Theorem \ref{thm:pairwise_indep_fin_approx}.
  If the $v_i$ are all dependent with nontrivial finest common coarsening $w$, we may assume after scaling and shifting as in 
  Remark~\ref{rem:shifting_and_scaling} that $w(z_i)=0$ and $x_i \in \mathcal{O}_w$ for all $i$.
  It then suffices to solve the induced problem in the residue field $Kw$, i.e.\ to find $\overline{x} \in Kw$ with $\overline{x}-\overline{x_i} \in \overline{z_i}\mathfrak{m}_{\bar{v_i}}$ for all $i$, since any lift $x \in K$ of $\overline x$ will be as desired.
Solving the induced problem in $Kw$ is possible inductively, since for any $v_i$ and $v_j$ which induce the same locality on the residue field (i.e.\ are in particular not strongly incomparable), we have assumed that $x_i = x_j$, so we obtain only one condition in the residue field for $v_i$ and $v_j$.
  
  Therefore assume that the $v_i$ are neither pairwise independent nor all dependent.
  After reordering, let $v_1, \dotsc, v_k$ with $1 < k < n$ be a maximal dependent subset with nontrivial common coarsening $w$.
  By scaling and shifting we may assume that $w(z_i)=0$ and $x_i \in \mathcal{O}_w$ for all $i \leq k$.
  We can apply the induction hypothesis in the residue field of $w$ to obtain an $\overline{x} \in Kw$ which satisfies $\overline{x}-\overline{x_i} \in \overline{z_i}\mathfrak{m}_{\bar{v_i}}$ for any $i \leq k$. Take any lift $x \in K$ of $\overline{x}$.
  For $i \leq k$ we then have $x + z_i \mathfrak{m}_w \subseteq x_i + z_i \mathfrak{m}_{v_i}$.
  Hence we may replace the conditions with respect to the $v_1, \dotsc, v_k$ by a single condition with respect to $w$, reducing the number of conditions.
  Using the induction hypothesis once more proves the claim.
\end{proof}

In order to formulate our approximation theorem with finitely many exceptional localities, we introduce a modified version of Assumption \ref{assumption:f_and_t}, applying to finitely many sets $S_1, \dotsc, S_n \subseteq \Sall(K)$ and an element $t \in K^\times$.
\begin{assumption}\label{assumption:exceptional}
  There exists $f \in K[X]$ of degree $d\geq 2$, with leading coefficient $a_d$, such that conditions \ref{assumption:f_and_t}(\ref{condIntegral}, \ref{condLeadingTerm}, \ref{condPlus}) are satisfied for all $v$ in any $S_i$, and condition \ref{assumption:f_and_t}(\ref{condNoApproxZero}) is satisfied for all $v$ in any infinite $S_i$.
\end{assumption}

Note that imposing conditions \ref{assumption:f_and_t}(\ref{condIntegral}, \ref{condLeadingTerm}, \ref{condPlus}) on those $S_i$ which are finite is quite weak in practice.
For any complex absolute value $\lvert\cdot\rvert$, the conditions are satisfied for example for $f=\sum_{i=0}^da_iX^i\in K[X]$ and $t\in K^\times$
if 
 \[ \sum_{i=0}^d \lvert a_i \rvert \leq 1, \quad \lvert t \rvert + \sum_{i=0}^{d-1} \lvert a_i\rvert \leq \lvert a_d \rvert, \quad \text{and}\quad  \lvert t \rvert \leq \frac{1}{2}, \]
 which are conditions we have already seen for orderings in Example \ref{ex:V_archimedean}.
 In particular, Example~\ref{ex:orderings_and_residue_fields} extends to also cover complex absolute values in this way.
 
For a valuation $v$, conditions \ref{assumption:f_and_t}(\ref{condIntegral}, \ref{condLeadingTerm}, \ref{condPlus}) are satisfied for example if $f$ has coefficients in $\mathcal{O}_v$ and $v(t) \geq v(a_d) = 0$.

\begin{theorem}\label{thm:exceptional}
  Let $S_1, \dotsc, S_n \subseteq \Sall(K)$, $t \in K^\times$, $x_1, \dotsc, x_n \in K$ and $z_1, \dotsc, z_n \in K^\times$.
  \begin{enumerate}
  \item[\condU] Assume that Assumption \ref{assumption:exceptional} holds for $S_1, \dotsc, S_n$ and $t$.
  \item[\condT] Assume that each $S_i$ is compact in the given topology.
  \item[\condI] Assume that for any valuation $w$ on $K$ with a refinement in $S_i$ and a refinement in $S_j$ we have $w(x_i - x_j) \geq w(z_i) = w(z_j)$; assume further that the $S_i$ are pairwise $t$-independent, and in Situation $\mathfrak{m}$ furthermore pairwise incomparable.
  \end{enumerate}
  Then there exists $x \in K$ with 
  $$
   x-x_i \in z_i\BB_v\mbox{ for all }v \in S_i,\mbox{ for }i = 1, \dots, n.
  $$ 
\end{theorem}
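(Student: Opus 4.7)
The plan is to prove Theorem \ref{thm:exceptional} by induction on the number $|F|$ of indices $i$ with $S_i$ finite. The base case $|F|=0$ reduces to Theorem \ref{thm:main}. For the inductive step I reorder so that $S_n=\{w_1,\dots,w_m\}$ is finite (possibly containing exceptional localities), apply the induction hypothesis to $S_1,\dots,S_{n-1}$ with scales $tz_i$ (exploiting the fact that $w(t)=0$ at every common coarsening $w$ of pairwise $t$-independent sets, so the compatibility condition carries over), and obtain $y\in K$ with $y-x_i\in tz_i\BB_v$ for $v\in S_i$, $i<n$. Writing $x=y+\epsilon$, it then suffices to find $\epsilon$ with $\epsilon\in tz_i\BB_v$ for all $v\in S_i$, $i<n$, and $\epsilon-(x_n-y)\in z_n\BB_{w_j}$ for each $w_j\in S_n$: the former conditions, combined with \ref{assumption:f_and_t}(\ref{condPlus}) exactly as in the proof of Theorem \ref{thm:main}, preserve the approximation at the earlier sets, while the latter deliver the new conditions at $S_n$.

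To construct $\epsilon$ I would follow the strategy of Lemma \ref{lem:induction}, with $S_n$ playing the role formerly assigned to $S_1$. For each $w_j\in S_n$ one builds an auxiliary element $b_j$ which is small at $w_j$ and has $b_j^{-1}$ small on the compact union $\bigcup_{i<n}S_i$. This is a singleton case of Lemma \ref{lem:pairwise_indicator_functions} in which the $\phi$-combination over the ``$S_1$'' side is trivial, so condition \ref{assumption:f_and_t}(\ref{condNoApproxZero}) is not invoked at $w_j$. The $b_j$'s are then combined into a single $b$ (using that $S_n$ is finite), and $\epsilon$ is extracted from $b$ via Lemma \ref{lem:approximation_lemma}; at the exceptional $w_j$ we invoke only part (1) of that lemma, which in Situation $\mathfrak{m}$ requires only \ref{assumption:f_and_t}(\ref{condLeadingTerm}) by Remark \ref{rem:Lemma52_details}, matching exactly what Assumption \ref{assumption:exceptional} guarantees at $w_j$.

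The main obstacle is that constructing $b_j$ via Lemma \ref{lem:pairwise_indicator_functions} also involves a $\phi$-combination over $\bigcup_{i<n}S_i$, which may itself contain finite exceptional sets at which \ref{assumption:f_and_t}(\ref{condNoApproxZero}) fails. At those exceptional points one must replace the $\phi$-based construction by direct appeals to Proposition \ref{prop:finitary_approximation} and Lemma \ref{lemma:finitary_weak_value_approx}, the tools developed in this section precisely for this purpose. Checking that the various compatibility and independence hypotheses propagate correctly through these substitutions---in particular tracking how pairwise $t$-independence enforces $w(t)=0$ at the relevant coarsenings, so that the scales $tz_i$ and $z_n$ agree there, and handling Situation $\mathcal{O}$, where the analogue of Remark \ref{rem:Lemma52_details} is not directly available---is the principal bookkeeping task.
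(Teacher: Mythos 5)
Your overall architecture (reduce to a two-block gluing problem, handle the exceptional localities via Proposition \ref{prop:finitary_approximation} and Lemma \ref{lemma:finitary_weak_value_approx}, and invoke only the parts of Corollary \ref{cor:phimin} and Lemma \ref{lem:approximation_lemma} that survive without \ref{assumption:f_and_t}(\ref{condNoApproxZero})) is the right one, but the decision to induct on the number of finite sets, peeling off one finite $S_n$ at a time, creates a structural problem that the proposed patches cannot repair. The point is that condition \ref{assumption:f_and_t}(\ref{condNoApproxZero}) is genuinely needed for Proposition \ref{prop:phiminmax}(2), hence for Corollary \ref{cor:phimin}(2), and also for Lemma \ref{lem:approximation_lemma}(2); only part (1) of each survives at exceptional localities. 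Consequently, in any construction of the gluing element $b$, exceptional localities can only be tolerated on the side where $b$ itself is forced to be small (where one applies Corollary \ref{cor:phimin}(1)), never on the side where $b^{-1}$ must be small. In your inductive step both sides can carry exceptional localities: the block $\bigcup_{i<n}S_i$ still contains the remaining finite sets, and there is no way to ``replace the $\phi$-based construction by direct appeals'' at those points, because $b$ is a single element of $K$ and its behaviour at a given locality is governed solely by which parts of Corollary \ref{cor:phimin} are valid there. The same defect appears in your final combination of $b_1,\dotsc,b_m$ over $S_n$: since $b_j$ is only known to be small at $w_j$ and nothing is known about $b_j$ at $w_{j'}$ for $j'\neq j$, making the combined $b$ small at every $w_j$ forces an application of Corollary \ref{cor:phimin}(2) at $w_j$ itself --- exactly where \ref{assumption:f_and_t}(\ref{condNoApproxZero}) may fail.

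The paper avoids this by not inducting at all: it groups \emph{all} finite sets into one block, solved in a single stroke by Proposition \ref{prop:finitary_approximation}, and all infinite sets into the other, solved by Theorem \ref{thm:main}; then, for each $v'$ in an infinite $S_i$, Lemma \ref{lemma:finitary_weak_value_approx} produces one element $b_{i,v'}$ that is simultaneously small at \emph{every} exceptional locality (possible because there are only finitely many in total) and has $b_{i,v'}^{-1}$ small at $v'$. The only $\phi$-combination is then indexed over the infinite side, so that Corollary \ref{cor:phimin}(1) is used at the exceptional localities and part (2) only where \ref{assumption:f_and_t}(\ref{condNoApproxZero}) holds. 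Your worry about Situation $\mathcal{O}$ is also resolved there by a detail you would need to supply: one demands $b\in t\mathfrak{m}_v\cap tz_i\mathfrak{m}_v$ (with $\mathfrak{m}_v$, not $\BB_v$) at the exceptional localities even in Situation $\mathcal{O}$, which Lemma \ref{lemma:finitary_weak_value_approx} delivers and which places one in the case of the proof of Lemma \ref{lem:approximation_lemma}(1) that uses only \ref{assumption:f_and_t}(\ref{condLeadingTerm}). I recommend abandoning the induction and adopting this two-block decomposition.
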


\begin{proof}
  We may assume that all $S_i$ are nonempty. After reordering if necessary, say $S_1, \dotsc, S_k$ are infinite and $S_{k+1}, \dotsc, S_n$ are finite.
  In Situation $\mathcal{O}$, we may furthermore assume that no element of $S_i$ with $i>k$ is a coarsening of an element of $S_j$ with $j \leq k$, since such a coarsening could simply be removed.

  We first apply Proposition \ref{prop:finitary_approximation} to the elements of $S_{k+1}, \dotsc, S_n$ to obtain an element $x' \in K$ such that $x' - x_i \in tz_i \BB_v$ for all $v \in S_i$, $i > k$.
  We secondly apply Theorem \ref{thm:main} to the sets $S_1, \dotsc, S_k$, elements $x_1, \dotsc, x_k$ and $tz_1, \dotsc, tz_k$ to obtain an element $x'' \in K$ with $x''-x_i \in tz_i\BB_v$ for all $v \in S_i$, $i \leq k$.

  We want to find $x \in K$ with $x-x' \in tz_i\BB_v$ for $v \in S_i$, $i > k$, and $x-x'' \in tz_i\BB_v$ for $v \in S_i$, $i \leq k$; such an $x$ is as desired, by condition \ref{assumption:f_and_t}(\ref{condPlus}).
  Note that we have the basic compatibility condition 
  $$
   w(x'-x'') \geq \min\{w(x_i-x'), w(x_i-x_j), w(x_j-x'')\} \geq w(tz_i)=w(tz_j)
   $$ for any valuation $w$ coarsening elements of $S_i$ and $S_j$, $i > k \geq j$.
  
  If $x' = x''$, we set $x=x'$, and in the remaining case we may assume without loss of generality that $x' = 1$ and $x''=0$, by scaling and shifting all $x_i$ and $x'$ and $x''$, and scaling all $z_i$.
  It now suffices to find $b \in K^\times$ with $b \in t\mathfrak{m}_v \cap tz_i\mathfrak{m}_v$ for $v \in S_i$, $i > k$, and $b^{-1} \in t\BB_v \cap tz_i\BB_v$ for all $v \in S_i$, $i \leq k$; with such $b$ we can apply Lemma \ref{lem:approximation_lemma} to obtain $x$. (Note that by Remark \ref{rem:Lemma52_details} only condition \ref{assumption:f_and_t}(\ref{condLeadingTerm}) is necessary for $v \in S_i$, $i>k$, to apply the lemma.)
  
  To find such $b$, we imitate the proof of Lemma \ref{lem:pairwise_indicator_functions}.
  For any $i \leq k$ and $v' \in S_i$, there exists an element $b_{i,v'} \in K^\times$ with $b_{i,v'} \in t\mathfrak{m}_v \cap tz_j\mathfrak{m}_v$ for all $v \in S_j$, $j > k$, and $b_{i,v'}^{-1} \in t\BB_{v'} \cap tz_i\BB_{v'}$; this follows from Lemma \ref{lemma:finitary_weak_value_approx} applied to ${v'}$ and the $v \in S_{k+1} \cup\dotsb\cup S_n$ with an element $z_{v'} \in K^\times$ chosen to satisfy $z_{v'}\BB_{v'} = t\BB_{v'} \cap tz_i\BB_{v'}$ and elements $z_v \in K^\times$ chosen to satisfy $z_v\mathfrak{m}_v = t\mathfrak{m}_v \cap tz_j \mathfrak{m}_v$ for any $j>k$ with $v \in S_j$ (note that if $v \in S_j \cap S_{j'}$ then $z_j \mathfrak{m}_v = z_{j'} \mathfrak{m}_v$ by $\condI$).
  Note that if $w$ coarsens $v'$ and some $v\in S_j$, 
  then $w(z_i)=w(z_j)$, hence $w(z_{v'})=w(z_v)$.
  
  By compactness of the $S_i$, there exists a finite list $P$ of pairs $(i,v)$ such that for any $i \leq k$ and $v' \in S_i$ there exists $v \in S_i$ with $(i,v) \in P$ and $b_{i,v}^{-1} \in t\BB_{v'} \cap tz_i\BB_{v'}$.
  Now take $b = \phi(b_1, \dotsc, b_m)$, where 
  $b_1,\dots,b_m$
  are the $\{b_{i,v}:(i,v)\in P\}$ in arbitrary order.
  By Corollary \ref{cor:phimin}, the first part of which only requires conditions \ref{assumption:f_and_t}(\ref{condIntegral}, \ref{condLeadingTerm}), this $b$ is as desired.
\end{proof}

\section{Approximation of values of rational functions}
\label{sec:functions}

Let
$S\subseteq\Sall(K)$ be nonempty.
We write
$$
 \MM_{S}:=\bigcap_{v\in S}\MM_{v}.
$$ 
Note that
$\MM_{S}\cdot\MM_{S}\subseteq\MM_{S}$,
although $\MM_{S}$ is %not in general
never
an element of $\B(K)$ as defined in Section~\ref{sec:balls}.
% which should be compared with properties of balls,
% cf.\ Section \ref{sec:balls}.
%
Let us assume throughout that $S$ does not contain the trivial valuation, is compact in $\mathcal{T}_{{\rm Zar}^*}$.
We furthermore fix $t \in K^\times$, and assume that either $S$ is finite, or $S$ and $t$ satisfy Assumption \ref{assumption:f_and_t}.
(This is for instance the case if $S_1, \dotsc, S_n$ are given which, together with $t$, satisfy Assumption \ref{assumption:exceptional}, and $S$ is a subset of some $S_i$.)
%The particular value of $t$, beyond its mere existence, will be unimportant.
%Note that $S \subseteq \Sval(K) \cup \Sord(K)$ since complex places always violate Assumption \ref{assumption:f_and_t}.
\begin{lemma}\label{lem:min_general}
  Given $z_1, \dotsc, z_n \in K^\times$ such that for any $v \in S$ we have $z_i \in \mathcal{O}_v$ for at least one $i$,
 there exists $z \in K^\times$ with $z \in \bigcap_{i=1}^n z_i \mathcal{O}_v$ for each $v \in S$.
\end{lemma}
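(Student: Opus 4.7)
The plan is to split along the dichotomy in the standing assumption, namely finite $S$ versus $S$ satisfying Assumption \ref{assumption:f_and_t}. In both cases, the guiding observation is that for each fixed $v \in S$ the balls $z_1\mathcal{O}_v, \dots, z_n\mathcal{O}_v$ are totally ordered by inclusion, so $\bigcap_i z_i\mathcal{O}_v$ equals the smallest of them; equivalently, for a valuation $v$ we seek $z \in K^\times$ with $v(z) \geq \max_i v(z_i)$.

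Under Assumption \ref{assumption:f_and_t}, the key insight will be that the function $\phi$ from Proposition \ref{prop:phiminmax} behaves like a \emph{minimum}, whereas we want a \emph{maximum}; to convert one into the other I plan to invert the inputs (and the output). Concretely, setting $z := \phi(z_1^{-1}, \dots, z_n^{-1})^{-1}$, the non-vanishing of $f$ on $K$ (Remark \ref{rem:ad_in_Ov}) ensures that $\phi$ takes nonzero values on nonzero arguments, so $z \in K^\times$, and the hypothesis that some $z_i \in \mathcal{O}_v$ is precisely the input required by Corollary \ref{cor:phimin}(2) to conclude $z \in \bigcap_i z_i \mathcal{O}_v$ for every $v \in S$.

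For finite $S = \{v_1, \dots, v_m\}$, I would instead appeal to Proposition \ref{prop:finitary_approximation}. For each $k$, pick an index $i^*(k)$ with $z_{i^*(k)}\mathcal{O}_{v_k} = \bigcap_i z_i\mathcal{O}_{v_k}$, and apply the proposition in Situation $\mathcal{O}$ with the $v_k$, centres $x_k := 0$, and radii $z_{i^*(k)}$. The required compatibility condition $z_{i^*(k)}\mathcal{O}_w = z_{i^*(l)}\mathcal{O}_w$ for every valuation $w$ coarsening both $v_k$ and $v_l$ will follow from the defining property of $i^*(k)$: since $z_{i^*(k)}/z_i \in \mathcal{O}_{v_k} \subseteq \mathcal{O}_w$ for every $i$, we get $w(z_{i^*(k)}) = \max_i w(z_i) = w(z_{i^*(l)})$. (The case $m=1$ is handled by taking $z := z_{i^*(1)}$.)

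The main obstacle will be that Proposition \ref{prop:finitary_approximation} applied with $x_k = 0$ does not rule out $z = 0$, so in the finite case a further argument is needed to secure $z \in K^\times$. I plan to handle this by noting that $\bigcap_k z_{i^*(k)}\mathcal{O}_{v_k}$ is a finite intersection of neighbourhoods of $0$ in the respective $v_k$-topologies and thus a neighbourhood of $0$ in the supremum topology; since each $v_k$ is nontrivial, the supremum topology is non-discrete, and so this neighbourhood necessarily contains nonzero elements.
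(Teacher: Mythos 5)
Your treatment of the infinite case is exactly the paper's: set $z = \phi(z_1^{-1},\dotsc,z_n^{-1})^{-1}$ and invoke Corollary \ref{cor:phimin}(2); nothing to add there. In the finite case you diverge from the paper, and this is where the gap lies. Your application of Proposition \ref{prop:finitary_approximation} with all centres equal to $0$ is legitimate (the compatibility check $w(z_{i^*(k)})=\max_i w(z_i)=w(z_{i^*(l)})$ is correct, and the strong-incomparability hypothesis is vacuous since all $x_k$ coincide), but, as you yourself note, it only produces some $x\in K$ with $x\in\bigcap_k z_{i^*(k)}\mathcal{O}_{v_k}$, and $x=0$ is a perfectly good output. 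Your proposed repair --- ``each $v_k$ is nontrivial, hence the supremum topology is non-discrete, hence the neighbourhood contains a nonzero element'' --- is not a proof. The implication ``each factor topology is non-discrete $\Rightarrow$ the supremum topology is non-discrete'' is false for general topologies (e.g.\ the lower- and upper-limit topologies on $\mathbb{R}$ are each non-discrete but their supremum is discrete), and for the specific topologies at hand the non-discreteness of the supremum is precisely the assertion that a finite intersection $\bigcap_k z_k'\mathfrak{m}_{v_k}$ of balls around $0$ contains a nonzero point --- which is essentially the finite case of the lemma you are trying to prove. So the argument is circular at its crucial step.

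The statement you need is true, but it requires an approximation-type argument, and the paper supplies one: it applies Lemma \ref{lemma:finitary_weak_value_approx}(2) to $v_0=v_{\mathrm{trivial}}$, $z_0'=1$ and the $v_j$ with $z_j'$ chosen so that $z_j'\mathcal{O}_{v_j}=\bigcap_i z_i\mathcal{O}_{v_j}$. The hypotheses of that lemma are trivially met (every nontrivial locality properly refines $v_0$, and the only valuation coarsening $v_0$ is $v_0$ itself, for which $w(z_0')=0=w(z_j')$), and its conclusion hands you an element $z$ that is nonzero \emph{by the statement of the lemma}, with $z\in z_j'\mathfrak{m}_{v_j}\subseteq\bigcap_i z_i\mathcal{O}_{v_j}$ for all $j$. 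You should replace your final paragraph by this invocation (at which point Proposition \ref{prop:finitary_approximation} becomes superfluous, since Lemma \ref{lemma:finitary_weak_value_approx}(2) already does the whole finite case); alternatively, any honest proof of the nonvanishing claim would itself amount to reproving that lemma.
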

\begin{proof}
  If $S$ is finite, say $S = \{ v_1, \dotsc, v_m \}$, then choose $z_1', \dotsc, z_m'$ to satisfy $z_j'\mathcal{O}_{v_j} = \bigcap_{i = 1}^n z_i \mathcal{O}_{v_j}$, let $v_0$ be the trivial valuation and $z_0' = 1$, and apply Lemma \ref{lemma:finitary_weak_value_approx}(2) to the $v_j$ and $z_j'$.
  If $S$ is infinite, then Assumption \ref{assumption:f_and_t} is satisfied for $S$ and $t$, and the claim follows from Corollary \ref{cor:phimin}(2) by letting $z = \phi(z_1^{-1}, \dotsc, z_n^{-1})^{-1}$.
\end{proof}

\begin{lemma}\label{lem:basis.0}
The set $\MM_S$ contains a non-zero element, and the family
$$
    \Mcal_{S}:=\big\{z\MM_{S}\;\big|\;z\in\MM_{S}\setminus\{0\}\big\}
$$
is a filter base.
\end{lemma}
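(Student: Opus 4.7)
The plan is to handle the two assertions in turn, with both reducing to an application of Lemma \ref{lem:min_general}. For the first, that $\MM_S$ contains a non-zero element, I would proceed via a compactness argument. For every $v \in S$ the set $\mathfrak{m}_v$ contains a non-zero element $z_v$, since by hypothesis $S$ contains no trivial valuation and orderings and absolute values are nontrivial by definition. The set $\{w \in S : z_v \in \mathfrak{m}_w\}$ is an intersection of $S$ with a subbasic open of $\mathcal{T}_{\rm Zar^*}$, and contains $v$; as $v$ ranges over $S$ these form an open cover. Using compactness of $S$ in $\mathcal{T}_{\rm Zar^*}$ I extract a finite subcover corresponding to $z_1,\dots,z_n \in K^\times$. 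Every $v \in S$ satisfies $z_i \in \mathfrak{m}_v \subseteq \mathcal{O}_v$ for at least one $i$, so Lemma \ref{lem:min_general} provides $z \in K^\times$ with $z \in \bigcap_i z_i\mathcal{O}_v$ for every $v \in S$. For any such $v$, picking $i$ with $z_i \in \mathfrak{m}_v$, the property $\mathfrak{m}_v\mathcal{O}_v = \mathfrak{m}_v$ recorded in Section \ref{sec:balls} gives $z \in z_i\mathcal{O}_v \subseteq \mathfrak{m}_v$, so $z \in \MM_S \setminus \{0\}$.

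For the filter base property, the family $\Mcal_S$ is nonempty by what was just proved, and every $z\MM_S$ is nonempty since $0 \in \MM_S$. It remains to verify the intersection condition. Given $z_1, z_2 \in \MM_S \setminus \{0\}$, both lie in $\mathcal{O}_v$ for every $v \in S$, so Lemma \ref{lem:min_general} applies directly and yields $z_3 \in K^\times$ with $z_3 \in z_1\mathcal{O}_v \cap z_2\mathcal{O}_v$ for every $v \in S$. As in the first step, $z_3 \in z_i\mathcal{O}_v \subseteq \mathfrak{m}_v\mathcal{O}_v = \mathfrak{m}_v$ for every $v$ and $i = 1, 2$, so $z_3 \in \MM_S \setminus \{0\}$ and $z_3\MM_S \in \Mcal_S$. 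Finally, for any $m \in \MM_S$ and any $v \in S$, $(z_3/z_i)m \in \mathcal{O}_v\mathfrak{m}_v = \mathfrak{m}_v$, so $z_3m/z_i \in \MM_S$ and hence $z_3m \in z_i\MM_S$. This gives $z_3\MM_S \subseteq z_1\MM_S \cap z_2\MM_S$, as required.

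There is no serious obstacle here: once one recognises that Lemma \ref{lem:min_general} is the right tool, both parts follow almost immediately, the only subtlety being the use of $\mathcal{T}_{\rm Zar^*}$-compactness to pass from the pointwise choice of a non-zero element of each $\mathfrak{m}_v$ to a single element lying in all of them simultaneously. The fact that $\mathfrak{m}_v\mathcal{O}_v = \mathfrak{m}_v$ is what allows us to convert containment of $z$ in $z_i\mathcal{O}_v$ into containment in $\mathfrak{m}_v$, which is the key mechanism in both paragraphs.
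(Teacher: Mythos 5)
Your proof is correct and follows essentially the same route as the paper's: a $\mathcal{T}_{\rm Zar^*}$-compactness argument to reduce to finitely many elements $z_i$ with $\bigcap_i z_i\mathcal{O}_v \subseteq \mathfrak{m}_v$ for all $v \in S$, followed by Lemma \ref{lem:min_general} and the identity $\mathfrak{m}_v\mathcal{O}_v = \mathfrak{m}_v$ for both the non-triviality of $\MM_S$ and the filter base condition. No issues.
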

\begin{proof}
First, a simple compactness argument shows that
$\MM_{S}\neq\{0\}$, as follows.
Each $v\in S$ is nontrivial,
so there exists
$z_{v}\in\mathfrak{m}_{v}\setminus\{0\}$.
By compactness in
$\mathcal{T}_{\mathrm{Zar}^{*}}$,
there are finitely many
$v_{1},\dots,v_{n}\in S$ such that
$$
    S\subseteq\bigcup_{i=1}^{n}\{v\in\Sall(K):z_{v_{i}}\in\mathfrak{m}_{v}\}.
$$
Therefore $\bigcap_{i=1}^{n}z_{v_{i}}\mathcal{O}_{v}\subseteq\mathfrak{m}_v$, for all $v\in S$.
Lemma \ref{lem:min_general} yields $z_0 \in K^\times$ with $z_{0}\in\bigcap_{i=1}^{n}z_{v_{i}}\mathcal{O}_{v}$ for each $v \in S$.
Therefore $z_{0}\in\MM_{S}\setminus\{0\}$,
and so $\Mcal_{S}$ is a nonempty family of nonempty sets.

To see that $\Mcal_{S}$ is a filter base, we let $z_{1},z_{2}\in\MM_{S}\setminus\{0\}$.
Lemma \ref{lem:min_general} again yields non-zero $z$ with $z \in z_{1}\mathcal{O}_v\cap z_{2}\mathcal{O}_v\subseteq\mathfrak{m}_v$ for each $v\in S$.
Thus $z \MM_S\in\Mcal_S$ and $z\MM_{S}\subseteq z\mathfrak{m}_v\subseteq z_1\mathfrak{m}_v\cap z_2\mathfrak{m}_v$
for every $v\in S$,
and therefore
$z\MM_S\subseteq z_{1}\MM_{S}\cap z_{2}\MM_{S}$, as required.
\end{proof}

Consequently, there is a filter $\mathcal{N}_{S,0}$ of which $\Mcal_{S}$ is a filter base.
%In \cite{PZ}, Prestel and Ziegler make the following statement.

\begin{lemma}\label{lem:PZ}
Let $\mathcal{F}$ be a filter on $K$.
Then there exists a Hausdorff field topology $\mathcal{T}$ on $K$ such that $\mathcal{F}$ is the filter of $\mathcal{T}$-neighbourhoods of $0$
if and only if the following conditions hold.
\begin{enumerate}
\item $\forall x\in K^\times\,\exists V\in\mathcal{F}\,:\,x\notin V$
\item $\forall U\in\mathcal{F}\,\exists V\in\mathcal{F}\,:\,V+V\subseteq U$
\item $\forall U\in\mathcal{F}\,\exists V\in\mathcal{F}\,:\,V\subseteq -U$
\item $\forall U\in\mathcal{F}\,\exists V\in\mathcal{F}\,:\,V\cdot V\subseteq U$
\item $\forall U\in\mathcal{F}\,\forall x\in K^{\times}\,\exists V\in\mathcal{F}\,:\,xV\subseteq U$
\item $\forall U\in\mathcal{F}\,\exists V\in\mathcal{F}\,:\,(1+V)^{-1}\subseteq 1+U$
\end{enumerate}
\end{lemma}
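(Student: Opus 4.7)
The plan is to prove the two directions separately, with the forward implication being a routine unpacking of continuity and the reverse being a construction.

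For the forward direction, assume $\mathcal{T}$ is a Hausdorff field topology on $K$ for which $\mathcal{F}$ is the filter of neighbourhoods of $0$. Each of conditions (1)--(6) is then a direct translation of one of the defining properties of $\mathcal{T}$: (1) encodes the Hausdorff axiom applied at $0$ and $x$; (2), (3), and (4) express the continuity at $(0,0)$ of addition, the continuity at $0$ of negation, and the continuity at $(0,0)$ of multiplication, respectively; (5) expresses continuity at $0$ of multiplication by a fixed $x\in K^\times$; (6) is continuity of inversion at $1$. These verifications are immediate given the definitions.

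For the converse, assume (1)--(6) hold. The construction is the standard one: set $\mathcal{N}_x := x+\mathcal{F} = \{x+U : U \in \mathcal{F}\}$ for each $x\in K$, and declare
\[ \mathcal{T} := \{O\subseteq K : \forall x\in O, \ O\in\mathcal{N}_x\}. \]
The crucial step, and the one that is not purely formal, is to show that $\mathcal{N}_x$ is genuinely the filter of $\mathcal{T}$-neighbourhoods of $x$; equivalently, that for every $U\in\mathcal{F}$ the set $\{y\in x+U : x+U\in\mathcal{N}_y\}$ contains $x$ and is itself in $\mathcal{T}$. This reduces to showing that whenever $U\in\mathcal{F}$, the set of $y\in K$ with $y+V\subseteq U$ for some $V\in\mathcal{F}$ is a ``$\mathcal{T}$-open core'' of $U$ containing $0$. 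Here condition (2) is the key input: choose $V\in\mathcal{F}$ with $V+V\subseteq U$, and then for each $y\in V$ we have $y+V\subseteq U$, so $U\in\mathcal{N}_y$ for all $y\in V$.

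Once $\mathcal{T}$ is established as a topology with neighbourhood filters $\mathcal{N}_x$, the Hausdorff property and continuity of the field operations follow in a routine way. Hausdorffness: given $x\neq y$ use (1) to find $U\in\mathcal{F}$ with $x-y\notin U$, then combine (2) and (3) to find $V\in\mathcal{F}$ with $V-V\subseteq U$, so $(x+V)\cap(y+V)=\emptyset$. Continuity of addition at arbitrary $(a,b)$ follows from (2); continuity of negation from (3); continuity of multiplication at $(a,b)$ from expanding $(a+V)(b+W)-ab = aW+bV+VW$ and applying (5), (5), and (4) in succession, then shrinking via (2); and continuity of inversion at $a\in K^\times$ reduces, via translation by $a$ and scaling by (5), to the case $a=1$, which is exactly (6).

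The main obstacle is the second paragraph, namely verifying that the naive declaration $\mathcal{N}_x = x+\mathcal{F}$ actually produces a topology in which these are the neighbourhood filters; everything else is bookkeeping. I note that this is a classical characterization of (Hausdorff) field topologies by filter bases at $0$, going back to the topological-field literature (cf.\ for example the treatment in \cite{Warner}), and in the present paper it serves only as a convenient packaging of the axioms which we will then apply to the specific filter generated by $\mathcal{M}_S$.
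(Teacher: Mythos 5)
Your proof is correct, but it takes a genuinely different (and much longer) route than the paper. The paper's proof is essentially a citation: it invokes \cite[Ch.~II Thm.~11.4]{Warner} to get that conditions (2)--(5) characterize neighbourhood filters of $0$ for ring topologies, then observes that (6) is continuity of inversion at $1$, which by continuity of multiplication already gives continuity of inversion on all of $K^\times$, and that (1) says $\bigcap\mathcal{F}=\{0\}$, which in a topological group is equivalent to Hausdorffness. You instead reconstruct the classical argument from first principles: building the topology from the translated filters $x+\mathcal{F}$, verifying via (2) that these really are the neighbourhood filters, and then checking each continuity axiom by hand (your reduction of inversion at general $a$ to the case $a=1$ matches the paper's use of (6)). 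Your version is self-contained where the paper's is not; the paper's is shorter and delegates the delicate topological bookkeeping to a reference. One detail you should make explicit in your construction: for $x+U$ to be a neighbourhood of $x$ one needs $0\in U$ for every $U\in\mathcal{F}$, and this is not a filter axiom --- it follows from (2) and (3): pick $V$ with $V+V\subseteq U$ and $W\in\mathcal{F}$ with $W\subseteq -V$; any $v\in V\cap W$ satisfies $v,-v\in V$, whence $0=v+(-v)\in U$. This same fact is what lets you pass from $V+V\subseteq U$ to three-fold sums in the multiplication argument, so it is worth stating once rather than leaving it implicit in the ``bookkeeping.''
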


\begin{proof}
By \cite[Ch.~II Thm.~11.4]{Warner}, $\mathcal{F}$ satisfies (2)-(5) if and only if $\mathcal{F}$ is the filter of $\mathcal{T}$-neighborhoods of $0$ for a ring topology $\mathcal{T}$, which is then uniquely determined by $\mathcal{F}$.
In this case, by continuity of addition, the sets $1+U$ form the filter of $\mathcal{T}$-neighborhoods of $1$, and therefore (6) is equivalent to the continuity of inversion at $1$.
By continuity of multiplication, this is already equivalent to the continuity of inversion on $K^\times$.
Finally, for the filter of $\mathcal{T}$-neighborhoods of $0$, (1) is equivalent to
$\bigcap\mathcal{F}=\{0\}$,
which in any topological group holds if and only if the topology is Hausdorff (\cite[Ch.~I Thm.~1.7]{Warner}).
\end{proof}

\begin{lemma}\label{lem:basis}
The filter $\mathcal{N}_{S,0}$ satisfies conditions (1)-(6) of Lemma \ref{lem:PZ}.
\end{lemma}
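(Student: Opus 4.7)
The plan is to verify each of the six conditions in turn, leveraging the properties of $\MM_S$ together with Assumption \ref{assumption:f_and_t}(\ref{condPlus}) (or, when $S$ is finite and Assumption \ref{assumption:f_and_t} need not hold, a finitary approximation replacement). Conditions (3) and (4) come for free: $\MM_S=-\MM_S$ because each $\mathfrak{m}_v$ is symmetric, and $\MM_S\cdot\MM_S\subseteq\MM_S$ combined with $z_1\in\MM_S$ makes the choice $V=U$ work for both.

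For condition (1), I would argue by contradiction. Suppose $x\in K^\times$ lies in $z\MM_S$ for every $z\in\MM_S\setminus\{0\}$. Pick a nonzero $z_0\in\MM_S$ via Lemma \ref{lem:basis.0}; then $x/z_0\in\MM_S$, which together with $z_0\in\MM_S$ forces $x=z_0\cdot(x/z_0)\in\mathfrak{m}_v\cdot\mathfrak{m}_v\subseteq\mathfrak{m}_v$ for each $v\in S$, so $x\in\MM_S\setminus\{0\}$. Applying the hypothesis with $z=x$ gives $1\in\MM_S$, contradicting the non-triviality of each $v\in S$. Condition (5) is handled directly by Lemma \ref{lem:min_general}: given $U=z_1\MM_S$ and $x\in K^\times$, applying the lemma to the family $\{z_1/x,z_0\}$ for any $z_0\in\MM_S\setminus\{0\}$ (the hypothesis holds since $z_0\in\mathcal{O}_v$ for every $v\in S$) yields $z_2\in(z_1/x)\mathcal{O}_v\cap z_0\mathcal{O}_v\subseteq(z_1/x)\mathcal{O}_v\cap\mathfrak{m}_v$ for all $v\in S$, so that $V=z_2\MM_S$ satisfies $xV\subseteq U$.

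Conditions (2) and (6) form the heart of the argument. For both, given $U=z_1\MM_S$, I propose taking $V=z_2\MM_S$ with $z_2=tz_1z_0$ for some nonzero $z_0\in\MM_S$; this lies in $\MM_S$ because $t\in\mathcal{O}_v$ (Remark \ref{rem:ad_in_Ov}) and $z_1z_0\in\mathfrak{m}_v$. For (2), Assumption \ref{assumption:f_and_t}(\ref{condPlus}) gives $t(a+b)\in\mathcal{O}_v$ for any $a,b\in\MM_S$ and any $v\in S$, whence $z_2(a+b)/z_1=z_0\cdot t(a+b)\in\mathfrak{m}_v\mathcal{O}_v=\mathfrak{m}_v$, so $V+V\subseteq U$. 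For (6), I would rely on the identity $(1+z_2y)^{-1}-1=-z_2y/(1+z_2y)$: at valuations, $1+z_2y$ is a unit and $v(z_2y)>v(z_1)$ directly; at orderings, (\ref{condPlus}) forces $|t|_v\leq\tfrac12$, which bounds $|z_2y|_v$ below $\tfrac12|z_1|_v$ and $|1+z_2y|_v$ above $\tfrac12$, yielding $|z_2y/(1+z_2y)|_v<|z_1|_v$ as required.

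The main obstacle is the quantitative estimate in (6) at orderings, which requires $|1+z_2y|_v$ to be bounded away from $0$ uniformly in $y\in\MM_S$ and $v\in S$; this is exactly what Assumption \ref{assumption:f_and_t}(\ref{condPlus}) supplies via the forced inequality $|t|_v\leq\tfrac12$ at orderings. In the remaining case that $S$ is finite, where Assumption \ref{assumption:f_and_t} need not hold, the role of $t$ in (2) and (6) can instead be played by an auxiliary element produced by finitary approximation (Lemma \ref{lemma:finitary_weak_value_approx}), chosen to be small at every ordering in $S$ and a unit at every valuation in $S$.
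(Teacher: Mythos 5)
Your proof is correct and follows the same overall template as the paper's: verify the conditions on the filter base, get (3) and (4) from $\MM_{S}\cdot\MM_{S}\subseteq\MM_{S}$, and use Lemma \ref{lem:min_general} together with condition \ref{assumption:f_and_t}(\ref{condPlus}) for the remaining items. Two of your verifications take a genuinely different route. For (1) you argue by contradiction using only the multiplicative structure of $\MM_{S}$ (ending at $1\in\MM_{S}$), whereas the paper constructs a witness $y_0\in x\mathcal{O}_v\cap z_0\mathcal{O}_v$ directly via Lemma \ref{lem:min_general}; your version is shorter and avoids the approximation lemma entirely. For (6) the paper does not use $t$ at all: it proves the inclusion $(1+\tfrac{c}{2})^{-1}\in 1+c\mathfrak{m}_{v}$ at non-valuations and then applies Lemma \ref{lem:min_general} to the radii $z_4$ and $z_4/2$; you instead extract the inequality $\lvert t\rvert_v\le\tfrac12$ at orderings from \ref{assumption:f_and_t}(\ref{condPlus}) and work with $z_2=tz_1z_0$, which is an equally valid quantitative estimate. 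Your explicit separation of the finite case is welcome --- the paper's own proof of (2) invokes \ref{assumption:f_and_t}(\ref{condPlus}) without comment even though the standing hypotheses do not guarantee it when $S$ is finite. Three small cautions on that finite case: ``small at every ordering'' must mean $\lvert t'\rvert_v\le\tfrac12$ (mere membership in $\mathfrak{m}_v$ is not enough, since an ordering may have elements of $\mathfrak{m}_v$ infinitesimally close to $1$); you need only $t'\in\mathcal{O}_v$ at the valuations, which is fortunate because Lemma \ref{lemma:finitary_weak_value_approx} does not produce simultaneous units at several valuations; and a finite $S$ may contain complex absolute values, which should be treated exactly like orderings.
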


\begin{proof}
It suffices to verify (1)-(6) for the filter base $\Mcal_{S}$.
% Let $z\in\MM_{S}\setminus\{0\}$.
% Clearly $\{0\}\subseteq z\MM_{S}$,
% and if
% then $zz_{0}\in z\MM_{S}\setminus\{0\}$.
% This proves the first part of (1).
% For the second part,
Let $x\in K^{\times}$, and choose $z_0 \in \MM_S \setminus \{ 0 \}$.
Lemma \ref{lem:min_general} yields $y_0 \in K^\times$ with $y_0 \in x \mathcal{O}_v \cap z_0 \mathcal{O}_v \subseteq \mathfrak{m}_v$ for all $v \in S$.
%and let $y_{0}:=\phi(x^{-1},z_{0}^{-1})^{-1}$, which we note is nonzero.
%By Proposition \ref{prop:phiminmax}(2),
%we have $y_{0}\in x\mathcal{O}_{v}\cap z_{0}\mathcal{O}_{v}\subseteq\mathfrak{m}_v$, for all $v\in S$.
In particular $y_{0}\in\MM_{S}\setminus\{0\}$,
and $x\notin y_0 \MM_{S}$.
% This proves the second part of (1).
This proves (1).

Turning to the other conditions,
let $z_{1}\in\MM_{S}\setminus\{0\}$.
Then $y_{1}:=z_{1}t\in\MM_{S}\setminus\{0\}$.
It follows from condition \ref{assumption:f_and_t}(iv) that
$
    \MM_{S}+\MM_{S}\subseteq t^{-1}\MM_{S}
$.
Then 
$$
    y_{1}\MM_{S}+y_{1}\MM_{S}\subseteq z_{1}\MM_{S},
$$
which verifies (2).
Condition (3) holds because $-\MM_{S}=\MM_{S}$.

Let $z_{2}\in\MM_{S}\setminus\{0\}$,
and simply choose $y_{2}=z_{2}$.
Then
\begin{align*}
    (y_{2}\MM_{S})\cdot(y_{2}\MM_{S})&=z_{2}^{2}\MM_{S}\cdot\MM_{S}\subseteq z_{2}\MM_{S},
\end{align*}
which verifies (4).

Let $z_{3}\in\MM_{S}\setminus\{0\}$ and let $x\in K^{\times}$.
Lemma \ref{lem:min_general} produces $y_3 \in K^\times$ with $y_3 \in z_3 x^{-1} \mathcal{O}_v \cap z_3 \mathcal{O}_v$ for all $v \in S$. 
%We let
%$y_{3}:=z_{3}\cdot\phi(x,1)^{-1}$, which we note is nonzero.
%By Proposition \ref{prop:phiminmax}(2),
%we have $y_{3}\in z_{3}x^{-1}\mathcal{O}_{v}\cap z_{3}\mathcal{O}_{v}$, for all $v\in S$.
Therefore $y_{3}\in\MM_{S}$, since $z_{3}\in\MM_{S}$, and
we have
$
    xy_{3}\MM_{S}\subseteq z_{3}\MM_{S}
$,
which verifies (5).

Finally,
let $z_4 \in \MM_S$, $z_4 \neq 0$.
For any valuation $v \in S$, we have $(1+z_4\mathfrak{m}_v)^{-1} \subseteq 1 + z_4\mathfrak{m}_v$.
%For any ordering $v \in S$, we have $(1+\frac{z_4}{2}\mathfrak{m}_v)^{-1} \subseteq 1 + z_4\mathfrak{m}_v$ since for any real number $r \in (0,1)$ we have $1-r \leq (1+\frac{r}{2})^{-1} < (1-\frac{r}{2})^{-1} \leq 1 + r$.
For any other $v\in S$, we have
$(2+\mathfrak{m}_{v})^{-1}\subseteq\mathfrak{m}_{v}$.
Therefore 
$(1+\tfrac{c}{2})^{-1}=1-c(2+c)^{-1}\in1+c\mathfrak{m}_{v}$,
for all $c\in\mathfrak{m}_{v}$,
which establishes the inclusion
$(1+\tfrac{z_{4}}{2}\mathfrak{m}_{v})^{-1}\subseteq 1+z_{4}\mathfrak{m}_{v}$.
% For a complex absolute value, note that for any complex number $c$ with $\lvert c \rvert < 1$ we have $\lvert 1 - (1+\frac{c}{2})^{-1}\rvert = \lvert \sum_{i \geq 1} (-\frac{c}{2})^i \rvert \leq \lvert c \rvert$, establishing $(1+\frac{z_4}{2}\mathfrak{m}_v)^{-1} \subseteq 1 + z_4\mathfrak{m}_v$ for $v$ any complex place or ordering (by embedding an ordered field into its real closure, which is elementarily equivalent to $\mathbb{R}$).
Lemma \ref{lem:min_general} gives $y_4$ with
%Let $y:=\phi(z_{4}^{-1},2z_{4}^{-1})^{-1}$.
%As above, we have
%$y\mathfrak{m}_{S}\in\mathcal{M}_{S}$ and 
$y_4\mathfrak{m}_{S}\subseteq z_{4}\mathfrak{m}_{S}\cap\frac{z_{4}}{2}\mathfrak{m}_{S} \subseteq \mathfrak{m}_S$.
Then
$(1+y_4\mathfrak{m}_{S})^{-1}\subseteq 1+z_{4}\mathfrak{m}_{S}$,
which verifies (6).
\end{proof}

It follows from Lemma \ref{lem:basis} that %under the given conditions on $S$
there is a (unique) field topology
on $K$
% $\mathcal{T}_{S}$
of which $\mathcal{N}_{S,0}$ is the filter of neighbourhoods of $0$.
We call this topology the {\em $S$-topology}.
A subset of $K$ which is open in the $S$-topology is said to be
{\em $S$-open}.
An {\em $S$-ball} is a set of the form
$$
   \mathrm{B}_{S}(x,z):=x+z\MM_{S},
$$
for $x\in K$ and $z\in K^{\times}$.

\begin{lemma}\label{lem:m_S.open}
%Suppose that 
%$S\subseteq\Sall(K)\setminus\{v_{\mathrm{trivial}}\}$
%satisfies
%Assumption \ref{assumption:f_and_t}
%and is compact in $\mathcal{T}_{{\rm Zar}^*}$. Then
All $S$-balls are $S$-open.
\end{lemma}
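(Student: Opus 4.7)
The plan is to reduce the claim to the assertion that $\MM_S$ itself is $S$-open, and then to exhibit at each $m\in\MM_S$ an explicit basic neighborhood of $0$ whose translate by $m$ lies inside $\MM_S$. For the reduction, note that by Lemmas \ref{lem:PZ} and \ref{lem:basis} the $S$-topology is a Hausdorff field topology on $K$, so for any $x\in K$ and $z\in K^\times$ the affine map $u\mapsto x+zu$ is an $S$-homeomorphism of $K$ with itself; since $B_S(x,z)$ is the image of $\MM_S$ under such a homeomorphism, $S$-openness of $\MM_S$ will suffice. Fix $m\in\MM_S$; I would produce $z_1\in\MM_S\setminus\{0\}$ with $m+z_1\MM_S\subseteq\MM_S$. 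This exhibits $z_1\MM_S\in\Mcal_S$ as a basic neighborhood witnessing $m$ as an interior point of $\MM_S$, and since $\MM_S\subseteq\mathfrak{m}_v$ for each $v\in S$, the containment reduces locality-by-locality to $m+z_1\mathfrak{m}_v\subseteq\mathfrak{m}_v$ for every $v\in S$: for a valuation $v$ this is just $v(z_1)\geq 0$ (combined with $\mathfrak{m}_v+\mathfrak{m}_v=\mathfrak{m}_v$), while for an ordering $v$ the triangle inequality reduces it to the estimate $|z_1|_v\leq 1-|m|_v$.

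I would explicitly take $z_1=m_0\cdot t\cdot(1-m^2)$, where $m_0\in\MM_S\setminus\{0\}$ is furnished by Lemma \ref{lem:basis.0}. The factor $1-m^2$ is nonzero: for each $v\in S$, either $v$ is a valuation with $v(m)>0$, forcing $v(1-m^2)=0$, or $v$ is an ordering/absolute value with $|m|_v<1$, forcing $m^2\neq 1$. At a valuation $v\in S$, $v(z_1)=v(m_0)+v(t)+0>0$ (using $v(t)\geq 0$ from Remark \ref{rem:ad_in_Ov}), so $z_1\in\mathfrak{m}_v$ and the inclusion follows. At an ordering $v\in S$, Assumption \ref{assumption:f_and_t}(\ref{condPlus}) supplies the bound $|t|_v\leq\tfrac{1}{2}$, and one has $|1-m^2|_v=(1-|m|_v)(1+|m|_v)$, so
\[
 |z_1|_v < \tfrac{1}{2}(1-|m|_v)(1+|m|_v) \leq 1-|m|_v,
\]
as required; combined with $|n|_v<1$ for $n\in\mathfrak{m}_v$, this yields $|m+z_1n|_v<|m|_v+|z_1|_v<1$.

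The main obstacle is securing the estimate $|z_1|_v\leq 1-|m|_v$ uniformly in $v$ across the possibly infinite set of orderings in $S$: the factor $1-m^2$ is chosen precisely to absorb the pointwise dependence on $|m|_v$, while the global constant $|t|_v\leq\tfrac{1}{2}$ handles the rest. In the finite-$S$ case, where Assumption \ref{assumption:f_and_t} need not hold and $S$ may additionally contain complex absolute values, one argues analogously, replacing $t$ by a scalar produced by a single application of Proposition \ref{prop:finitary_approximation} (or Lemma \ref{lem:min_general}) satisfying $v(t)\geq 0$ at the finitely many valuations of $S$ and the analogous upper bound $|t|_v\leq\tfrac{1}{2}$ at the remaining finitely many localities.
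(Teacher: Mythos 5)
Your reduction to the $S$-openness of $\MM_S$ and your treatment of the infinite case are correct, and they follow the same overall structure as the paper's proof; the one genuine difference is your choice of shrinking factor. Where the paper applies Lemma \ref{lem:min_general} a second time to the pair $z_0(1-x)$, $z_0(1+x)$ to manufacture an element dominated by both $1-x$ and $1+x$ at every ordering, you take the explicit product $z_1=m_0\,t\,(1-m^2)$ and use the identity $|1-m^2|_v=(1-|m|_v)(1+|m|_v)$ at orderings together with the bound $|t|_v\le\tfrac12$ extracted from \ref{assumption:f_and_t}(\ref{condPlus}). This is a valid and slightly more economical route in the infinite case (where, as the paper notes via Remark \ref{rem:ad_in_Ov}, $S$ contains no complex absolute values), since all the required estimates are verified pointwise and $1-m^2\ne 0$ because $m\in\mathfrak{m}_v$ for some $v$.

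The gap is in the finite case. There $S$ may contain complex absolute values, and for those the identity you rely on fails: one only has $|1-m^2|_v\le 1+|m|_v^2$, and $|1-m^2|_v$ can genuinely be close to $1+|m|_v^2$ (take $m$ with $m^2$ close to $-|m|_v^2$ in the completion $\mathbb{C}$). Then $\tfrac12|1-m^2|_v$ can exceed $1-|m|_v$ --- e.g.\ with $|m|_v=0.9$ one gets $\tfrac12(1.81)=0.905>0.1$ --- so $z_1=m_0t'(1-m^2)$ need not satisfy $m+z_1\mathfrak{m}_v\subseteq\mathfrak{m}_v$ at a complex place, and "arguing analogously" does not go through. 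The factor $1-m^2$ absorbs the dependence on $|m|_v$ only for real absolute values. The fix is what the paper does in the finite case: use that $\mathfrak{m}_{v_i}$ is open in the $v_i$-topology to choose, for each of the finitely many $v_i\in S$ separately, an element $z_i\in\mathfrak{m}_{v_i}$ with $m+z_i\mathfrak{m}_{v_i}\subseteq\mathfrak{m}_{v_i}$, and then combine these via Lemma \ref{lem:min_general}; no uniform polynomial expression in $m$ is needed.
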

\begin{proof}
It suffices to show that $\MM_{S}$ is $S$-open. Assume first that $S$ is infinite, so $S$ contains no complex places by Assumption \ref{assumption:f_and_t} and Remark \ref{rem:ad_in_Ov}.
Let $x\in\MM_{S}$.
We must find
$z\in\MM_{S}\setminus\{0\}$
such that
$x+z\MM_{S}\subseteq\MM_{S}$.
First choose any $z_{0}\in\MM_{S}\setminus\{0\}$.
Lemma \ref{lem:min_general} affords $z \in K^\times$ with 
$z\in z_{0}(1-x)\mathcal{O}_{v}\cap z_{0}(1+x)\mathcal{O}_{v}\subseteq\mathfrak{m}_{v}$,
in particular $z\in\MM_{S}\setminus\{0\}$.

If $v\in S$ is a valuation then certainly
$x+z\mathfrak{m}_{v}\subseteq\mathfrak{m}_{v}$.
On the other hand, suppose that $v\in S$ is an ordering and let $y\in\mathfrak{m}_{v}$.
If
$0\leq_{v}x<_{v}1$, then
$1-x\leq_{v}1+x$, so
$x-1<_v z<_v 1-x$.
Otherwise if $-1<_{v}x<_{v}0$,
then
$-x-1<_vz<_v1+x$.
In either case we have
$x+zy\in\mathfrak{m}_{v}$,
and so
$x+z\mathfrak{m}_{v}\subseteq\mathfrak{m}_{v}$,
for all orderings $v\in S$.
Therefore
$x+z\MM_{S}\subseteq\MM_{S}$, as required.

It remains to treat the case of finite $S$, say $S = \{ v_1, \dotsc, v_n \}$. 
Let $x \in \MM_S$. 
Since $\mathfrak{m}_{v_i}$ is $v_i$-open, we may take $z_i \in \mathfrak{m}_{v_i}$ with $x + z_i \mathfrak{m}_{v_i} \subseteq \mathfrak{m}_{v_i}$.
Lemma \ref{lem:min_general} provides $z \in K^\times$ with $z \in z_i \mathcal{O}_{v_i}$ for all $v_i$, so in particular $z \in \MM_S$ and $x + z \MM_S \subseteq \MM_S$, as desired.
\end{proof}

It follows
from
% the assumptions of
Lemma \ref{lem:m_S.open}
that the family
$$
    \big\{{\rm B}_{S}(x,z) : x\in K,z\in\MM_{S}\setminus\{0\}\big\}
$$
is a base for the $S$-topology.
Note that the $S$-topology is finer than the $v$-topology,
for every $v\in S$.
For example, if $v\in S$ is a valuation, then
$$
	\MM_{v}=\bigcup_{x\in\MM_{v}}x+\MM_{S}.
$$
%On the other hand, if $v$ is an ordering or absolute value, then we may take a union of $S$-balls $x+z\RR_{S}$, chosen so that $x+z\RR_{S}\subseteq\mathfrak{m}_{v}$.
% 
% If $S=\{v\}$ is a singleton, then the $S$-topology coincides with the $v$-topology.
% %, which, as remarked in Section \ref{sec:balls}, is a field topology.
% Also note that
% $\mathrm{B}_{v}(x,z)=\mathrm{B}_{\{v\}}(x,z)$.
% We will now also work in situations where Assumption \ref{assumption:f_and_t} is not satisfied.
% For simplicity, if $S\subseteq\Sall(K)$ with $S=\{v\}$ a singleton,
% we speak of the $S$-topology and $S$-balls
% and mean the $v$-topology and $v$-balls.
%
%\begin{lemma}
%  If $S$ is finite, then the $S$-topology is the coarsest topology which is finer than the $v$-topology for all $v \in S$.
%\end{lemma}
%\begin{proof}
%  We noted above that the $S$-topology is finer than the $v$-topology for each $v \in S$.
%  Conversely, every $S$-ball is an intersection of $v$-balls as $v$ ranges over the finitely many elements of $S$, proving the claim.
%\end{proof}
If $S=\{v\}$ is a singleton, then 
${\rm B}_{v}(x,z)={\rm B}_{\{v\}}(x,z)$,
and thus the $S$-topology coincides with the $v$-topology.
%In the case of finite $S$, we shall refer 
%In this case we will speak of the $S$-topology and $S$-balls to mean the $v$-topology and $v$-balls,
%whether or not
%Assumption \ref{assumption:f_and_t}
%is satisfied.

For each $m\in\mathbb{N}$, we define the {\em $S$-topology} on $K^{m}$ to be the product topology induced by the $S$-topology on $K$.
An {\em $S$-ball} in $K^{m}$ is a set of the form
$$
    \mathrm{B}_{S}(\underline{x},\underline{z}):=\prod_{i=1}^{m}\mathrm{B}_{S}(x_{i},z_{i}),
$$
for tuples $\underline{x}=(x_{1},\dots,x_{m})\in K^{m}$ and $\underline{z}=(z_{1},\dots,z_{m})\in(K^{\times})^{m}$.
We also write
$\mathrm{B}_{v}(\underline{x},\underline{z}):=\mathrm{B}_{\{v\}}(\underline{x},\underline{z})$.
For $D\subseteq K^{l}$, a function $f:D\longrightarrow K^{m}$ is said to be {\em $S$-continuous} if it is continuous with respect to the $S$-topologies.
Furthermore, $f$ is {\em $S$-hereditarily continuous} if it is $S'$-continuous for each nonempty $\mathcal{T}_{{\rm Zar}^*}$-compact $S'\subseteq S$.

\begin{example}
\label{ex:rational_function}
If $g\in K(x_1,\dots,x_l)^m$ is a tuple of rational functions given by 
$g_i=\frac{h_i}{k_i}$ with $h_i,k_i\in K[x_1,\dots,x_l]$ coprime,
then the domain
$$
 D_g = \{ \underline{x}\in K^l : k_i(\underline{x})\neq 0 \mbox{ for all } i\}
$$
of $g$ is open with respect to each $v\in S$,
and the induced map $g:D_g\longrightarrow K^m$
is $S$-hereditarily continuous, since each of the $S'$-topologies (for $S'\subseteq S$ $\mathcal{T}_{{\rm Zar}^*}$-compact) is a field topology.
\end{example}

\begin{proposition}[{}]\label{prp:intersection}
Let $S_{1},\dots,S_{n}\subseteq\Sall(K)\setminus\{v_{\mathrm{trivial}}\}$ be nonempty, and let $t\in K^\times$.
\begin{enumerate}
\item[$\condU$] %Assume that Assumption \ref{assumption:f_and_t} holds for $S=S_1\cup\dots\cup S_n$.
% Assume that \ref{thm:exceptional}$\condU$ holds.
  Assume that Assumption \ref{assumption:exceptional} holds for $S_1, \dotsc, S_n$ and $t$.
\item[$\condT$] Assume each $S_i$ is compact in $\mathcal{T}_{{\rm Zar}^*}$.
\item[$\condI$] Assume that the $S_i$ are pairwise independent.
\end{enumerate}
For each $i$, let $A_{i}\subseteq K^{m}$ be a nonempty $S_{i}$-open set.
Then
$$
    \bigcap_{i=1}^nA_{i}\neq\emptyset.
$$
\end{proposition}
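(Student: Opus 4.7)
The plan is to reduce the $m$-dimensional claim to $m$ independent one-dimensional approximation problems, each of which is an immediate application of Theorem \ref{thm:exceptional} in Situation $\mathfrak{m}$. To begin, since each $A_i$ is nonempty and $S_i$-open, we may fix an $S_i$-ball
\[ {\rm B}_{S_i}(\underline{a}^{(i)}, \underline{c}^{(i)}) = \prod_{j=1}^m \bigl(a^{(i)}_j + c^{(i)}_j \MM_{S_i}\bigr) \subseteq A_i, \]
with $\underline{a}^{(i)}=(a^{(i)}_1,\dots,a^{(i)}_m)\in K^m$ and $\underline{c}^{(i)}=(c^{(i)}_1,\dots,c^{(i)}_m)\in (K^\times)^m$. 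A tuple $\underline{x}=(x_1,\dots,x_m)\in K^m$ then belongs to $\bigcap_{i=1}^n A_i$ provided that, for each coordinate $j$ and each $i$, we have $x_j - a^{(i)}_j \in c^{(i)}_j\mathfrak{m}_v$ for every $v\in S_i$.

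For a fixed $j$, this is precisely the approximation problem considered in Theorem \ref{thm:exceptional} in Situation $\mathfrak{m}$, applied to the data $S_1,\dots,S_n$, $t$, with $x_i := a^{(i)}_j$ and $z_i := c^{(i)}_j$. Hypothesis $\condU$ is granted by assumption, and $\condT$ holds because in Situation $\mathfrak{m}$ the given topology is $\mathcal{T}_{\rm Zar^*}$. For $\condI$, recall that any two independent nontrivial localities are strongly incomparable, in particular incomparable, and hence also $t$-independent; thus the pairwise independence of the $S_i$ automatically yields both pairwise $t$-independence and pairwise incomparability. It remains to verify the compatibility clause. If $i=j$ this is trivial. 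For $i\neq j$, suppose $w$ is a valuation coarsening some $v\in S_i$ and some $v'\in S_j$; then $w$ is a common coarsening of $v$ and $v'$, so $w$ coarsens $v\vee v'$. By independence, $v\vee v'$ is the trivial valuation, forcing $w$ itself to be trivial. Consequently $w(a^{(i)}_j - a^{(j)}_j) \geq 0 = w(c^{(i)}_j) = w(c^{(j)}_j)$ holds vacuously.

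Applying Theorem \ref{thm:exceptional} once for each $j=1,\dots,m$ produces elements $x_1,\dots,x_m\in K$, and the tuple $\underline{x}=(x_1,\dots,x_m)$ lies in every $A_i$ by construction, proving the proposition. The only step where one must pause is the verification of the compatibility condition, and this reduces cleanly to the observation that independence forces any relevant common coarsening to be trivial; no further analogue of Ribenboim's compatibility analysis is needed in this setting.
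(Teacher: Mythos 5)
Your proposal is correct and follows exactly the paper's (very terse) argument: shrink each $A_i$ to a product of $S_i$-balls and apply Theorem \ref{thm:exceptional} in Situation $\mathfrak{m}$ coordinatewise. The additional verifications you supply — that pairwise independence yields $t$-independence and incomparability, and that any common coarsening of localities from different $S_i$ is trivial, making the compatibility clause vacuous — are accurate and simply make explicit what the paper leaves to the reader.
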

\begin{proof}
We shrink each $A_{i}$ to a product of nonempty $S_{i}$-balls
$\prod_{j=1}^m\mathrm{B}_{S_i}(x_{ij},z_{ij})$,
then for each $j$ we apply Theorem~\ref{thm:exceptional}
in Situation $\mathfrak{m}$ 
to $x_{1j},\dots,x_{nj}$ and $z_{1j},\dots,z_{nj}$.
\end{proof}

\begin{theorem}
\label{thm:UA.hereditarily.continuous}
Let $S_{1},\dots,S_{n}\subseteq\Sall(K)\setminus\{v_{\mathrm{trivial}}\}$ be nonempty and pairwise disjoint,
$t\in K^\times$,
$\underline{y}_1,\dots,\underline{y}_n\in K^m$ and
$\underline{z}_1,\dots,\underline{z}_n\in (K^\times)^m$.
\begin{enumerate}
\item[$\condU$] 
%Assume that Assumption \ref{assumption:f_and_t} holds for $S=S_1\cup\dots\cup S_n$.
 %Assume that there exist $f \in K[X]$ and $t \in K^\times$ such that conditions \ref{assumption:f_and_t}(\ref{condIntegral}, \ref{condLeadingTerm}, \ref{condPlus}) are satisfied for all $v\in S=S_1\cup\dots\cup S_n$, and condition \ref{assumption:f_and_t}(\ref{condNoApproxZero}) is satisfied for all    
%  $v\in S'=S_1\cup\dots\cup S_k$, where $S_{k+1},\dots,S_n$ are singletons.
  Assume that Assumption \ref{assumption:exceptional} holds for $S_1, \dotsc, S_n$ and $t$.
\item[$\condT$] Assume each $S_i$ is compact in the constructible topology.
\item[$\condI$] Assume that the elements of $S=S_1 \cup \dotsb \cup S_n$ are pairwise independent.
\end{enumerate}
Let $D\subseteq K^{l}$ be $v$-open for all $v\in S$,
and let $g:D\longrightarrow K^{m}$
be $S_i$-hereditarily continuous for all $i$.
Suppose that for each $i$ and $v\in S_i$
there exists some $\underline{x}_v\in D$ with
$$
    g(\underline{x}_{v})\in\mathrm{B}_{v}(\underline{y}_{i},\underline{z}_{i}).
$$
Then there exists $\underline{x} \in D$ with 
$$
    g(\underline{x})\in\mathrm{B}_{v}(\underline{y}_{i},\underline{z}_{i})
$$ 
for every $i$ and $v\in S_i$.
\end{theorem}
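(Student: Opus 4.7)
The plan is to carry out two successive reductions. First, I would reduce to the case $n=1$ by applying Proposition~\ref{prp:intersection} to the sets
\[ A_i := D \cap g^{-1}(\mathrm{B}_{S_i}(\underline{y}_i, \underline{z}_i)) \subseteq K^l. \]
Here $D$ is $S_i$-open since the $S_i$-topology refines each $v$-topology with $v \in S_i$, and $\mathrm{B}_{S_i}(\underline{y}_i,\underline{z}_i)$ is $S_i$-open by Lemma~\ref{lem:m_S.open}. Since $S_i$ is $\mathcal{T}_{\mathrm{Zar}^{*}}$-compact by Remark~\ref{rem:topologies}, the map $g$ is $S_i$-continuous by hereditary continuity, so each $A_i$ is $S_i$-open. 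Granting for now that each $A_i$ is nonempty, Proposition~\ref{prp:intersection} yields $\underline{x}\in\bigcap_i A_i$, and any such $\underline{x}$ satisfies $g(\underline{x})\in\mathrm{B}_{S_i}(\underline{y}_i,\underline{z}_i)\subseteq\mathrm{B}_v(\underline{y}_i,\underline{z}_i)$ for every $v\in S_i$.

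It thus suffices to handle the case $n=1$, so fix $i$ and write $S=S_i$, $\underline{y}=\underline{y}_i$, $\underline{z}=\underline{z}_i$. For each $v \in S$, the set
\[ U_v := \{w\in\Sall(K) : g(\underline{x}_v)\in\mathrm{B}_w(\underline{y},\underline{z})\} \]
is a finite intersection of sets of the form $\{w : a \in \mathfrak{m}_w\}$, each clopen in $\mathcal{T}_{\mathrm{con}}$, hence $U_v$ itself is constructibly clopen, and $v \in U_v$ by hypothesis. By constructible compactness of $S$, finitely many $U_{v_1},\dots,U_{v_N}$ cover $S$; refining to the disjoint partition $T_j:=S\cap\bigl(U_{v_j}\setminus\bigcup_{k<j}U_{v_k}\bigr)$ gives constructibly clopen subsets of $S$, each $\mathcal{T}_{\mathrm{Zar}^{*}}$-compact. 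By construction $g(\underline{x}_{v_j}) \in \mathrm{B}_w(\underline{y},\underline{z})$ for every $w\in T_j$, i.e.\ $g(\underline{x}_{v_j})\in\mathrm{B}_{T_j}(\underline{y},\underline{z})$. Using the $T_j$-continuity of $g$ (again by hereditary continuity), I can choose $\underline{w}_j\in(K^\times)^l$ such that $\mathrm{B}_{T_j}(\underline{x}_{v_j},\underline{w}_j)\subseteq D$ and $g\bigl(\mathrm{B}_{T_j}(\underline{x}_{v_j},\underline{w}_j)\bigr)\subseteq\mathrm{B}_{T_j}(\underline{y},\underline{z})$.

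It now suffices to find $\underline{x}\in K^l$ with $\underline{x}-\underline{x}_{v_j}\in\underline{w}_j\mathfrak{m}_v^l$ for all $v\in T_j$ and all $j$, and this is a coordinatewise approximation problem to which I would apply Theorem~\ref{thm:exceptional} in Situation~$\mathfrak{m}$ with the sets $T_1,\dots,T_N$. Condition $\condU$ is inherited from $S$ since Assumption~\ref{assumption:exceptional} passes to subsets, $\condT$ holds since each $T_j$ is $\mathcal{T}_{\mathrm{Zar}^{*}}$-compact, and $\condI$ is satisfied because pairwise independence of the elements of $S$ forces the common coarsenings of elements from distinct $T_i,T_j$ to be trivial, at which point the compatibility condition is automatic. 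The main conceptual obstacle, and the essential use of hereditary continuity rather than mere $v_j$-continuity, is the step where the radii $\underline{w}_j$ are chosen uniformly across all $v\in T_j$; once this is in hand, the rest is bookkeeping.
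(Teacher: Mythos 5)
Your proposal is correct and follows essentially the same route as the paper: a constructibly clopen partition of each $S_i$ obtained by compactness, pulling back balls through $g$, and an application of the intersection machinery (Theorem \ref{thm:exceptional} via Proposition \ref{prp:intersection}). The only difference is organizational — the paper applies Proposition \ref{prp:intersection} once to the whole refined family $\{S_{i,j}\}$, whereas you apply it across the $S_i$ and then re-derive its content by hand within each $S_i$; the two are interchangeable.
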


\begin{proof}
For each $i$ and each $v\in S_{i}$, the condition
$g(\underline{x}_{v})\in\mathrm{B}_{v}(\underline{y}_{i},\underline{z}_{i})$
is satisfied in an open-closed neighbourhood of $v$ in the constructible topology on $S$.
By compactness, 
we find a finite covering of $S_i$ by such open-closed sets.
%these neighbourhoods form a finite covering
%of $S_{i}$.
By further refining this covering, we obtain a family 
$\{S_{i,1},\dots,S_{i,k_i}\}$
% $\{S_{i}^{j}\;|\;1\leq j\leq j_{i}\}$
which is a partition of $S_{i}$ by nonempty open-closed sets, such that for each $j$ there exists $\underline{x}_{ij}\in D$ such that
$g(\underline{x}_{ij})\in\mathrm{B}_{v}(\underline{y}_{i},\underline{z}_{i})$
for each $v\in S_{i,j}$.
Therefore, for each $i,j$, we have
$g(\underline{x}_{ij})\in\mathrm{B}_{S_{i,j}}(\underline{y}_{i},\underline{z}_{i})$.
It follows from our assumptions that $D\subseteq K^l$ is $S_{i,j}$-open, and likewise it follows that $g$ is $S_{i,j}$-continuous.
Therefore the preimage $A_{i,j}:=g^{-1}(\mathrm{B}_{S_{i,j}}(\underline{y}_{i},\underline{z}_{i}))\subseteq D$ is a nonempty $S_{i,j}$-open set, for each pair $i,j$.
By Proposition \ref{prp:intersection},
there exists $x\in\bigcap_{i=1}^n\bigcap_{j=1}^{k_i}A_{i,j}$,
and this satisfies the claim.
\end{proof}

\begin{corollary}
\label{cor:rational_functions}
Let $S_{1},\dots,S_{n}\subseteq\Sall(K)\setminus\{v_{\mathrm{trivial}}\}$ be nonempty and pairwise disjoint,
$t\in K^\times$,
$\underline{y}_1,\dots,\underline{y}_n\in K^m$ and
$\underline{z}_1,\dots,\underline{z}_n\in (K^\times)^m$.
\begin{enumerate}
\item[$\condU$] 
%Assume that Assumption \ref{assumption:f_and_t} holds for $S=S_1\cup\dots\cup S_n$.
% Assume that there exist $f \in K[X]$ and $t \in K^\times$ such that conditions \ref{assumption:f_and_t}(\ref{condIntegral}, \ref{condLeadingTerm}, %\ref{condPlus}) are satisfied for all $v\in S=S_1\cup\dots\cup S_n$, and condition \ref{assumption:f_and_t}(\ref{condNoApproxZero}) is satisfied for all    
%  $v\in S'=S_1\cup\dots\cup S_k$, where $S_{k+1},\dots,S_n$ are singletons.
  Assume that Assumption \ref{assumption:exceptional} holds for $S_1, \dotsc, S_n$ and $t$.
\item[$\condT$] Assume each $S_i$ is compact in the constructible topology.
\item[$\condI$] Assume that the elements of $S=S_1\cup\dots\cup S_n$ are pairwise independent.
\end{enumerate}
Let $g_1,\dots,g_m\in K(x_1,\dots,x_l)$ be rational functions.
Suppose that for each $i$ and $v\in S_i$
there exists some $\underline{x}_v\in D_g$ (cf.~Example \ref{ex:rational_function}) with
$$
    g_j(\underline{x}_{v})\in\mathrm{B}_{v}(y_{ij},z_{ij})
$$
for every $j$. Then there exists $\underline{x} \in D_g$ with 
$$
    g_j(\underline{x})\in\mathrm{B}_{v}({y}_{ij},{z}_{ij})
$$ 
for every $i,j$ and $v\in S_i$.
\end{corollary}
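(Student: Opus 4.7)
The plan is to deduce the corollary directly from Theorem \ref{thm:UA.hereditarily.continuous} by packaging the rational functions $g_1,\dots,g_m$ into the vector-valued map $g=(g_1,\dots,g_m)\colon D_g\longrightarrow K^m$. Since the hypotheses $\condU$, $\condT$, $\condI$ and pairwise disjointness of the $S_i$ transfer verbatim between the two statements, the only real task is to verify that $g$ and $D_g$ meet the two regularity requirements of Theorem \ref{thm:UA.hereditarily.continuous}, namely $v$-openness of $D_g$ for all $v\in S$ and $S_i$-hereditary continuity of $g$ for each $i$.

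Both of these are furnished by Example \ref{ex:rational_function}, but one must first check that each $S_i$ satisfies the standing assumptions of Section \ref{sec:functions}. By hypothesis each $S_i$ excludes the trivial valuation and is compact in the constructible topology, hence also in $\mathcal{T}_{\rm Zar^*}$ by Remark \ref{rem:topologies}. Moreover, Assumption \ref{assumption:exceptional} applied to $S_1,\dots,S_n$ and $t$ imposes conditions \ref{assumption:f_and_t}(\ref{condIntegral},\ref{condLeadingTerm},\ref{condPlus}) on every $v$ in every $S_i$, and in addition condition \ref{assumption:f_and_t}(\ref{condNoApproxZero}) on every $v$ in any infinite $S_i$. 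Thus each individual $S_i$ is either finite or, together with $t$, satisfies the full Assumption \ref{assumption:f_and_t}, which is precisely what Section \ref{sec:functions} presupposes. Example \ref{ex:rational_function} then yields both that $D_g$ is $v$-open for every $v \in S$ and that $g$ is $S_i$-hereditarily continuous for each $i$.

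Finally, I would unravel the definition of $S$-balls in $K^m$ to match the two formulations: by definition
\[
 \mathrm{B}_{v}(\underline{y}_i,\underline{z}_i)=\prod_{j=1}^{m}\mathrm{B}_{v}(y_{ij},z_{ij}),
\]
so the pointwise conditions $g_j(\underline{x}_v)\in\mathrm{B}_{v}(y_{ij},z_{ij})$ for all $j$ are literally $g(\underline{x}_v)\in\mathrm{B}_{v}(\underline{y}_i,\underline{z}_i)$, and similarly for the desired output. With this identification, Theorem \ref{thm:UA.hereditarily.continuous} applied to $g$ and $D_g$ produces the $\underline{x}\in D_g$ required by the corollary. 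Since every step is a direct invocation of previously established facts, no genuine obstacle appears; the corollary is essentially a specialisation of the theorem to the rational-function setting, and the only mild care needed is in the bookkeeping that translates between vector and component-wise formulations and in verifying that Assumption \ref{assumption:exceptional} on the family $S_1,\dots,S_n$ implies Assumption \ref{assumption:f_and_t} for each individual infinite $S_i$.
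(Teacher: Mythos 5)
Your proposal is correct and matches the paper's intended argument: the corollary is stated without a separate proof precisely because it is the specialisation of Theorem \ref{thm:UA.hereditarily.continuous} to $g=(g_1,\dots,g_m)$, with $v$-openness of $D_g$ and $S_i$-hereditary continuity supplied by Example \ref{ex:rational_function}. Your additional check that Assumption \ref{assumption:exceptional} makes each individual $S_i$ either finite or subject to the full Assumption \ref{assumption:f_and_t}, so that the standing hypotheses of Section \ref{sec:functions} apply, is exactly the bookkeeping the paper leaves implicit.
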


\begin{remark}
We remark that Corollary \ref{cor:rational_functions}
is indeed a generalization of Situation $\mathfrak{m}$
of Theorem~\ref{thm:exceptional} 
(so in particular of Theorem~\ref{thm:main})
under the stronger assumption
of pairwise independence,
as the latter one can be reobtained by taking 
the $g_j$ to be linear polynomials.
We do not know whether 
Theorem \ref{thm:UA.hereditarily.continuous}
or Corollary \ref{cor:rational_functions}
hold
without the assumption of independence under some natural compatibility condition.
\end{remark}

\begin{remark}
  Since in the last theorem and its corollary we have been working with a set $S$ of pairwise independent localities, it is possible to use  approximation results from the literature instead of our Theorem \ref{thm:exceptional},
  with a suitable adjustment of the (U) condition.
For example,
all results of this section will remain valid
for sets $S_1,\dots,S_n$ as in Theorem \ref{thm:Ershov.O}.
\end{remark}

\section*{Acknowledgements}

The authors would like to thank Marcus Tressl for several helpful discussions on spectral spaces and spaces of valuations and orderings.

Part of this work was done while all three authors were guests of the Institut Henri Poincar\'{e}
and the authors would like to thank the IHP for funding and hospitality,
and the organizers of the trimester `Model theory, combinatorics and valued fields’ for the invitation.

S.~A.~was supported by the Leverhulme Trust under grant RPG-2017-179.
P.~D.~was supported by KU Leuven IF C14/17/083.
A.~F.~was funded by the Deutsche Forschungsgemeinschaft (DFG) - 404427454.

\bibliographystyle{plain}
% \bibliography{bibliography}

\end{document}